\newtheorem{knowntheorem}{Theorem}
\newtheorem{claim}{Claim}
\theoremstyle{remark}
\newtheorem{remark}[theorem]{Remark}
\numberwithin{equation}{section}
\newcommand{\mx}{\mathbf x}
\newcommand{\mX}{\mathbf X}
\newcommand{\mW}{\mathbf W}
\newcommand{\la}{\lambda}
\newcommand{\tr}{{\rm tr}}\newcommand{\eps}{\varepsilon}
\newcommand{\RR}{\mathbb{R}}
\newcommand{\CC}{\mathbb{C}}
\newcommand{\NN}{\mathbb{N}}
\newcommand{\sC}{\mathcal{C}}
\newcommand{\Sn}{\mathcal{S}}
\newcommand{\WW}[1]{\mathcal{W}\left(#1\right)}
\newcommand{\WWq}[1]{\mathcal{W}_q\left(#1\right)}
\newcommand{\E}{\mathbb{E}}
\newcommand{\EE}{\tau}
\newcommand{\calF}{\mathcal{F}}
\newcommand{\calG}{\mathcal{G}}
\newcommand{\calH}{\mathcal{H}}
\newcommand{\calA}{\mathcal{A}}
\newcommand{\calM}{\mathcal{M}}
\newcommand{\calV}{\mathcal{V}}
\newcommand{\XX}{{\mathbf X}}
\newcommand{\YY}{{\mathbf Y}}
\newcommand{\ZZ}{{\mathbf Z}}
\newcommand{\sR}{{\mathbb R}}
\newcommand{\CR}{{\rm cr}}
\newenvironment{proofof}[1]{\par\medbreak
 \textit{#1.}\hskip.5em\ignorespaces}{\medbreak}
\begin{document}

\title{Compound real Wishart and $q$-Wishart matrices}

\author{W{\l}odzimierz  Bryc\affil{a}}
\address{
\affilnum{a}Department of Mathematical Sciences, University of Cincinnati, 2855
Campus Way, PO Box 210025, Cincinnati, OH 45221-0025, USA. Email: Wlodzimierz.Bryc@UC.edu}

\keywords{Compound Wishart matrices,
Central Limit Theorem, matrices with noncommutative entries, $q$-Gaussian random variables, fluctuations about the Marchenko-Pastur law}
\date{Created:  April 8, 2007. Printed: \today}


\received{October 5, 2007}
\revised{March 13, 2008}
\begin{abstract}
We introduce a family of matrices with
non-commutative entries that generalize the
classical real Wishart matrices.
 With the help of the Brauer product, we derive a
 non-asymptotic
expression for the
 moments of traces of monomials in such matrices; the expression is quite similar to the formula
 derived in  \cite[Theorem 2.1]{Bryc-06} for independent complex Wishart
 matrices.

 We then analyze the fluctuations about the Marchenko-Pastur law. We  show that after centering by the mean,
  traces of real symmetric polynomials in $q$-Wishart matrices converge in distribution, and we identify the asymptotic law as
  the normal law when $q=1$, and as the semicircle law when $q=0$.
\end{abstract}
\tolerance=1000

\maketitle
\section{Introduction}
The real
Wishart distribution was introduced by Wishart \cite{Wishart-1928}. As evidenced by the vast literature, the Wishart law
 is of primary importance to statistics, see e.g. \cite{Anderson-03,Muirhead}.
The compound Wishart distribution was introduced by Speicher \cite{Speicher-98}.
We will consider a slightly
more general class of laws on random matrices which was introduced
 in Ref.
\cite{Hanlon-Stanley-Stembridge-92}, and also its non-commutative generalization. 

 \begin{definition} Let $\Sigma$ be an $N\times N$
 real positive-definite matrix (i.e. $x^*\Sigma x>0$ for all non-zero
 $x\in\RR^N$). Let $B$ be an $M\times M$ real
 matrix. We will say that a random matrix $\mW$ is Wishart with shape parameter $B$ and scale parameter $\Sigma$, if
\begin{equation}
  \label{Def gW}
  \mW=A'\mX' B \mX A,
\end{equation}
where $\mX$ is an $M\times N$ matrix of i.i.d. $N(0,1)$ random variables, and $A$ is a symmetric root,
 $A'A=A^2=\Sigma$. (Here and throughout the paper, $A'$ denotes the transpose of $A$.)
We write $\mW\in\WW{\Sigma,B}$.
  \end{definition}

When $B$ is positive-definite, Ref. \cite{burda-2006} interprets $\mW$ as
a sample covariance under correlated sampling.
Similar interpretation for the complex case appears in Ref. \cite{Chuah-Tse-Kahn-Valenzuela-02}.
Somewhat more generally, one could consider
all matrices of the form \eqref{Def gW} with non-symmetric $A$; the  moments of such matrices would then depend on
$\Sigma^+=A'A$ and $\Sigma^-=AA'$ as well as on $B,B'$.

The usual real Wishart matrices correspond to  the choice $B=I_M$, so
the notation $W(\Sigma,M)$  in \cite[Section 7.2]{Anderson-03} means $\WW{\Sigma,I_M}$ in our notation.
Ref.  \cite{Graczyk-Letac-Massam-05} uses a slightly different parametrization  $\sigma=2\Sigma$, $p=M/2$,
which should be kept in mind when comparing the formulas.
Compound Wishart matrices are the elements of $\WW{I_N,B}$, see \cite[Section 4.4]{Speicher-98} or
\cite[page 169]{Hiai-Petz}.

Our main result about real Wishart matrices, Theorem \ref{T1},  gives a
 non-asymptotic
expression for the
 moments of traces of monomials in several independent real Wishart matrices. This formula is similar to the expression
 derived in  \cite[Theorem 2.1]{Bryc-06} for independent complex Wishart
 matrices. The statement requires additional notation on Brauer products which will be introduced in Section \ref{Sect 1}.
  Our main application of this formula deals with the fluctuations about the Marchenko-Pastur law.
The latter problem fits naturally into a more general non-commutative setting, see Theorem  \ref{T4}.

We now introduce 
matrices which arise from matrices with  noncommutative $q$-Gaussian entries by
a formula similar to \eqref{Def gW}.
The noncommutative $q$-Gaussian random variables were introduced on the  algebraic level
by Frisch and Bourret \cite{FB70}; additional physical applications are in
\cite{Parisi-94,vanLeeuwen-Maassen-98}. The Fock-space construction is due to Bo\.zejko and Speicher
\cite{Bozejko-Speicher-91}; an excellent exposition of this topic
appears in Ref. \cite{BKS-97}.

Fix a parameter $-1\leq q\leq 1$. Let $\calA$  be a noncommutative probability space, i.e., a
 unital complex $*$-algebra with a tracial state $\EE:\calA\to\CC$; see, e.~g.,
\cite[Sect. 1.2]{Hiai-Petz} or \cite{VDN92} for more details and examples.
\begin{definition}[{\cite[Theorem 1]{FB70}}]\label{q-Prob Space}
 $\XX_1,\XX_2,\dots\in\calA$ are $q$-Gaussian, if  $\XX_j=\XX_j^*$, and for any finite collection of
 $\YY_1,\dots,\YY_n\in\{\XX_1,\XX_2,\dots\}$,
\begin{equation*}
  \label{q-Wick}
  \EE(\YY_{1}\dots \YY_{n})=
    \sum_{\calV}
  \displaystyle q^{\CR(\calV)}\prod_{\{i,j\}\in\calV} \EE( \YY_i \YY_j).
\end{equation*}
The sum is over all pair partitions $\calV$ of  $\{1,\dots,n=2k\}$
 and to be interpreted as $0$ when $n$ is odd;
$cr(\calV):=\#\{(i_1,i_2): \{i_1,j_1\}, \{i_2,j_2\}\in\calV, i_1<i_2<j_1<j_2\}$ is the number
of crossings of a pair partition $\calV$.
\end{definition}

Consider now the $*$-probability space $\calM_{N\times N}(\calA)$ of
 all $N\times N$ matrices with elements from $\calA$,
with matrix multiplication as the product operation, with state
$$\EE\circ\tr_N\left( \XX\right):=\frac1N\sum_{i=1}^N\EE([\XX]_{i,i}),$$
and with the natural definition of the star operation as conjugation of elements in $\calA$, and transposition.

For $N\geq 1$, a positive definite real  $N\times N$ matrix $\Sigma$ and an $M\times M$
positive definite real matrix $B$, let $\mX_{i,j}$ be $q$-Gaussian noncommutative random variables such that
 for $1\leq i,k\leq M$, $1\leq j,m\leq N$,
\begin{equation}
  \label{q-factorization}
  \EE(\XX_{i,j}\XX_{k,m})=[B]_{i,k}[\Sigma]_{j,m}.
\end{equation}

\begin{definition}
  \label{Def q-Wishart}
We will say that  $\mW\in\calM_{N\times N}(\calA)$ is a $q$-Wishart random variable
   with shape parameter $B$ and scale parameter $\Sigma$ if
  \begin{equation}\label{q-Wigner}
    \mW=\mX^* \mX,
  \end{equation}
  where
$\mX=\left[ \XX_{i,j}\right]$
is the $M\times N$ matrix with $q$-Gaussian entries that satisfy \eqref{q-factorization}.
 We will write $\mW\in\WWq{\Sigma,B}$.
\end{definition}
(Matrices with noncommutative entries have been studied in
\cite{Nica-Shlyakhtenko-Speicher-00,Ryan-CMP-98,Shlyakhtenko-97,Thorbjornsen-00}.
According to \cite[Definition 1.2.2]{Mingo-Nica-01}
when $\Sigma=I_N$, $B=I_M$, $\XX_{i,j}$ form a $q$-circular system.)

We will also need a substitute for independence.
\begin{definition}
  We will say that $\mW_1,\mW_2,\dots,\mW_s\in\WWq{\Sigma,B}$ are $q$-orthogonal
  if the entries of the corresponding matrices \eqref{q-Wigner}:
  $\{\XX_{i,j}^{(1)}\}$ for $\mW_1$,  $\{\XX_{i,j}^{(2)}\}$ for $\mW_2$, ..., are pairwise
    uncorrelated, $\EE(\XX_{i,j}^{(k)}\XX_{i',j'}^{(k')})=[B]_{i,i'}[\Sigma]_{j,j'}\delta_{k,k'}$.
\end{definition}

\begin{remark}
When $q=1$, all $q$-Gaussian random variables commute and their joint moments are the moments of the multivariate normal
 law of matching covariance.
In particular, for every finite $q$-Gaussian family $\XX_1,\dots,\XX_m$, there exists an $m\times m$ real matrix
$A$ and independent classical standard normal random variables $Z_1,\dots,Z_m$ such that  with $Y_j=\sum_k [A]_{jk}Z_k$,
  $$
  \EE( \XX_1^{k_1}\dots \XX_m^{k_m})=\E(Y_1^{k_1}\dots Y_m^{k_m}) \mbox{ for all $k_1,\dots,k_m\geq 0$}.
  $$
Thus the joint moments of $q$-orthogonal  $\mW_1,\mW_2,\dots,\mW_s\in\WWq{\Sigma,B}$ coincide with the respective joint moments
of independent real Wishart matrices. Under appropriate assumptions, such matrices are
asymptotically free and their limit laws are compound Marchenko Pastur laws. The conclusion of Theorem \ref{T4} in this case
means that for real Wishart matrices the fluctuations around the Marchenko-Pastur law are asymptotically normal.
(This fact is known for
complex Wishart matrices, see \cite{Mingo-Nica-04}.)
\end{remark}
\begin{remark}\label{R2}
  When $q=0$,  for every finite $q$-Gaussian family $\XX_1,\dots,\XX_m$, there exists an $m\times m$ real matrix
$A$ and free semicircular elements  $\ZZ_1,\dots,\ZZ_m$ such that   with $\YY_j=\sum_k [A]_{jk}\ZZ_k$,
  $$
  \EE( \XX_{j_1}\dots \XX_{j_n})=\EE( \YY_{j_1}\dots \YY_{j_n}) \mbox{ for all $1\leq j_1,\dots,j_n\leq m$}.
  $$
  Thus, for example, the traces of powers of  $q$-orthogonal  $\mW_1,\mW_2,\dots,\mW_s\in\WWq{\Sigma,B}$  are free in $(\calA,\EE)$.
In Proposition \ref{T3} we show that   $\mW\in\WWq{\frac1NI_N,B}$ as
an element of $(\calM_{N\times N}(\calA),\EE\circ\tr_N)$
  has a compound Marchenko-Pastur law already for finite $N$,
so on average the fluctuations around the limit are zero. Theorem \ref{T4} shows that the un-averaged
fluctuations around the Marchenko-Pastur law have a non-trivial semicircular limit.
\end{remark}

%

Asymptotic normality for  traces of polynomials in several independent complex Wishart
matrices is established in Ref. \cite{Mingo-Nica-04}. This case served as a departure point for a number of papers
on second order freeness, see e.g. \cite{Collins-Mingo-Siniady-Speicher-06} and the references therein.
 For an extension to more general covariances see also
\cite[Theorem 1]{Bryc-06}. Asymptotic normality for one real Wishart matrix is established in {\cite[Theorem 2]{Arharov-71}, \cite[Theorem 4.1]{Jonsson-82}}. Asymptotic normality of self-adjoint polynomials in noncommutative entries in a different setting appears in
  \cite[Theorem 1]{Kuperberg-05}. It turns out that in our noncommutative setting, asymptotic limit law always exists,
but it depends on the polynomial in a nontrivial way.

\begin{theorem}\label{T4}
Suppose $\mW^{(N)}_1,\mW^{(N)}_2,\dots,\mW_s^{(N)}\in\WWq{\frac1NI_N,I_M}$ are $q$-orthogonal and $M=M(N)$ is such that
$M/N\to\la\in[0,\infty)$ and $N\to\infty$.
  If $Q()$ is a fixed non-commutative polynomial in $s$ variables,  and
  $$X_{N}:=\tr(Q(\mW^{(N)}_1,\dots,\mW^{(N)}_s))-\EE\left(\tr(Q(\mW^{(N)}_1,\dots,\mW^{(N)}_s))\right),$$
then $X_{N}$ converges in moments as $N\to\infty$.
Furthermore, if $Q$ is a real symmetric polynomial so that $X_{N}$ is self-adjoint,
then  $X_{N}$ converges in distribution; if in addition  $q=1$ then $X_{N}$ has asymptotically normal law;
  if $q=0$ then $X_{N}$ has asymptotically semicircle law.
\end{theorem}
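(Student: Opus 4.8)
The plan is to reduce everything to the combinatorial moment formula for $q$-Wishart matrices, an analogue of Theorem~\ref{T1}, and then run a cumulant/graph-counting argument in the spirit of \cite{Bryc-06,Mingo-Nica-04}. First I would expand $Q$ into a linear combination of monomials $\mW_{\epsilon(1)}\cdots\mW_{\epsilon(\ell)}$, so that $\tr Q$ is a linear combination of traces of monomials $T_{\mathbf w}:=\tr(\mW_{\epsilon(1)}\cdots\mW_{\epsilon(\ell)})$; by linearity it suffices to control the joint moments of the centered variables $\mathring T_{\mathbf w}:=T_{\mathbf w}-\EE T_{\mathbf w}$. Writing each $\mW_k=\mX^{(k)*}\mX^{(k)}$ and expanding the trace over matrix indices, $T_{\mathbf w}$ becomes a sum over index cycles of products of entries $\XX^{(k)}_{i,j}$, whose expectations are computed by the $q$-Wick formula (Definition~\ref{q-Prob Space}) together with the covariance \eqref{q-factorization} and $q$-orthogonality. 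For a product of $p$ such traces, the joint moment is a sum over pair partitions $\calV$ of the $2$-times-(total length) entries, weighted by $q^{\CR(\calV)}$, and subject to the matching constraint that paired entries carry the same replica label $k$; the scaling $\Sigma=\frac1N I_N$, $B=I_M$ turns each loop contribution into a power of $N$ (and of $M/N\to\lambda$).

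The key structural step is the usual genus-type estimate: attach to each pair partition $\calV$ (with its index structure) a surface/graph, and show that the exponent of $N$ is maximized exactly on the partitions that, after centering, correspond to \emph{connected} gluings of the $p$ traces in a tree-like fashion — i.e. each additional trace beyond the first is joined to the rest by exactly one pair, all other pairs being ``internal'' to a trace or to an already-connected block. Centering by the mean kills the disconnected (leading, $O(N^{0})$ per factor after normalization) terms, so that the normalized $p$-th mixed moment survives only when the $p$ traces are glued into a single connected object; a standard count then shows this connected contribution is $O(1)$ for $p=2$ and, for $p\geq 3$, is $o(1)$ unless the gluing graph is a tree on the $p$ traces — giving at order $N^{0}$ only the ``pair-of-pants'' two-trace interactions, hence moments that satisfy a Wick-type (Gaussian/semicircular) recursion. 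This is the part I would carry out most carefully, because the bookkeeping of which pairings are ``leading'' after centering is exactly where the complex-Wishart arguments of \cite{Bryc-06,Mingo-Nica-04} have to be re-done for the real/noncommutative entries, and where the factor $q^{\CR(\calV)}$ enters.

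Finally I would read off the limiting law from the surviving pairings. For $p=2$ the limit of the covariance $\EE(\mathring T_{\mathbf w}\mathring T_{\mathbf{w}'})$ is a finite sum over the connected two-trace gluings; call the resulting bilinear form $\sigma(\cdot,\cdot)$ on the space of monomials, so $X_N\to X$ with $\mathrm{Var}(X)$ given by $\sigma(Q,Q)$. When $q=1$ every crossing weight is $1$, all pair partitions contribute, and the connected-tree-of-pairings structure forces the higher moments of $X$ to factor according to \emph{all} pair partitions of the $p$ blocks: that is the Gaussian moment recursion, so $X$ is normal with variance $\sigma(Q,Q)$. When $q=0$ only noncrossing pair partitions survive (the weight $q^{\CR(\calV)}$ vanishes on crossing ones), so the $p$ blocks are glued only according to \emph{noncrossing} pair partitions, which is precisely the semicircular moment recursion; hence $X$ is semicircular with the same variance. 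The main obstacle, as noted, is the genus/leading-order analysis after centering in the noncommutative real setting — once that is in place, the $q=1$ and $q=0$ identifications are the standard specializations of the $q$-Wick formula to ordinary Gaussian and to free-semicircular combinatorics, respectively, and convergence in distribution from convergence of all moments follows since normal and semicircular laws are determined by their moments.
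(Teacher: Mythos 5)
Your overall strategy---expand $Q$ into monomials, apply the $q$-Wick formula, center, and run a genus-type estimate on the surviving pair partitions, then read off a Wick/non-crossing-Wick moment recursion at $q=1$ and $q=0$---is a legitimate alternative route, but it is a genuinely different one from the paper's. The paper does \emph{not} verify any Wick recursion directly. Instead it proves Theorem \ref{T5}: if $Y_N$ is a second, $q$-orthogonal copy of $X_N$, the mixed moments satisfy \eqref{cond variance}, i.e.\ the asymptotic conditional variance of $X_N$ given $X_N+Y_N$ is constant. At $q=1$ the copies are independent and Laha's theorem \cite{Laha-57} then forces normality; at $q=0$ they are free and \cite[Theorem 3.2]{Bozejko-Bryc-04} forces the semicircle law. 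Both routes rest on the same combinatorics (the exact centered expansion of Lemma \ref{Lemma M1} and the genus estimate of Lemma \ref{L M2} via Proposition \ref{L:machi}), but the paper only ever has to analyze a single $(m+2)$-point moment identity, rather than the full factorization for every order $p$ that your approach requires.

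There is also a concrete error in the step you yourself flag as the crux. You claim the leading contributions after centering are those pairings whose ``gluing graph'' on the $p$ traces is a \emph{tree}. That is false: if $\gamma$ glues $m$ cycles of $\sigma$ into one connected component, Proposition \ref{L:machi} gives an exponent $2-m-h$ for that component, so a tree on $p\ge 3$ traces (a single component with $m=p$) contributes $O(N^{2-p})=o(1)$. The correct statement, which is Lemma \ref{L M2}, is that only those $\gamma$ survive which partition the $p$ cycles of $\sigma$ into \emph{pairs}, each pair glued planarly ($h=0$); i.e.\ the gluing graph must be a \emph{perfect matching} of the traces (so all odd centered moments vanish in the limit). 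It is precisely this pair structure, together with the factorization $M_p(\gamma)=\prod_{\{u,v\}}M_2(\gamma_{u,v})$ and, for $q=0$, the observation that noncrossing of $\gamma$ forces the induced matching of the blocks to be noncrossing with noncrossing $\gamma_{u,v}$ inside each pair, that yields the Gaussian and semicircular recursions you want. Replace ``tree'' by ``perfect matching'' and supply those two factorization checks, and your direct argument closes; otherwise, the conclusion ``pair-of-pants two-trace interactions give the Wick recursion'' does not actually follow from what you have written.
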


Theorem \ref{T4} raises an interesting question about how to identify or describe
the limiting law for  all $q\in[-1,1]$.
Our proof  relies on a general property of $q$-Wishart matrices, see
Theorem \ref{T5}, and freeness or independence when $q=0$ or $q=1$; this technique is not helpful in general.

Computer-assisted calculations based on exact formulas from Theorem \ref{T2} indicate that
the
limit law  for general $q$ depends on the polynomial in a complicated way, see Example \ref{Ex3}.
Similar situation was noted for  Wigner matrices in noncommutative entries in an
unpublished manuscript \cite{Speicher}.

For one matrix case, one may be able to find  polynomials with
limits that can still be described in simple terms.
One such polynomial seems to be $\tr(\mW^{(N)})$, see Example
\ref{T4-counterexample};
another suggestive computer-assisted calculation is in Example \ref{Ex 4}.

%

We now list two corollaries.
We begin with  traces of polynomials of a single $q$-Wishart matrix.
\begin{corollary}\label{CT4}
  If $Q$ is a real polynomial,  $\mW^{}\in\WWq{\frac1NI_N,I_M}$, $M/N\to \la\in[0,\infty)$,
   then
  $$X_{N}:=\tr(Q(\mW^{}))-\EE\left(\tr(Q(\mW^{}))\right)$$
  converges in distribution as $N\to\infty$.
  Furthermore, if $q=1$ then $X_{N}$ has asymptotically normal law;
  if $q=0$ then $X_{N}$ has asymptotically semicircle law.
\end{corollary}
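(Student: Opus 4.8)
The plan is to obtain Corollary \ref{CT4} as the one-matrix specialization of Theorem \ref{T4}, so the real content is already carried by that theorem and the work here is purely a reduction. First I would note that the hypotheses of Theorem \ref{T4} hold verbatim with $s=1$: a single matrix $\mW\in\WWq{\frac1NI_N,I_M}$ is trivially a ``$q$-orthogonal'' family of one element, and $M/N\to\la\in[0,\infty)$ with $N\to\infty$ is exactly the scaling assumed there. Hence the first assertion of Theorem \ref{T4} already gives that $X_{N}=\tr(Q(\mW))-\EE(\tr(Q(\mW)))$ converges in moments for every non-commutative polynomial $Q$ in one variable, and in particular for every ordinary real polynomial $Q$.

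Next I would verify the self-adjointness required for the second half of Theorem \ref{T4}. Since $\mW=\mX^*\mX$, the matrix $\mW$ is self-adjoint in $\calM_{N\times N}(\calA)$, so for a polynomial $Q$ with real coefficients one has $Q(\mW)^*=Q(\mW^*)=Q(\mW)$; thus $Q(\mW)$ is self-adjoint, each diagonal entry $[Q(\mW)]_{i,i}$ is a self-adjoint element of $\calA$, the normalized trace $\tr_N(Q(\mW))=\frac1N\sum_i[Q(\mW)]_{i,i}$ is self-adjoint, and $\EE(\tr(Q(\mW)))\in\RR$. Therefore $X_{N}$ is self-adjoint, so every real polynomial in one variable automatically qualifies as a ``real symmetric polynomial'' in the sense of Theorem \ref{T4}. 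Applying the ``furthermore'' part of that theorem then delivers convergence in distribution, asymptotic normality when $q=1$, and the asymptotic semicircle law when $q=0$, which is exactly the statement of the corollary.

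There is essentially no obstacle: the analytic input (the exact moment identity of Theorem \ref{T2}, the structural property of Theorem \ref{T5}, and the identification of the limit through freeness at $q=0$ and Gaussianity/independence at $q=1$) has already been spent in proving Theorem \ref{T4}. The only point one should not skip is the elementary observation just made, namely that, in contrast with the multivariate case, no symmetry hypothesis on $Q$ is needed here because self-adjointness of $X_{N}$ is forced by self-adjointness of $\mW$ alone. So the proof reduces to a short citation of Theorem \ref{T4} together with this remark.
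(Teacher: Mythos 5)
Your reduction is exactly what the paper intends: Corollary \ref{CT4} is stated immediately after Theorem \ref{T4} without a separate proof precisely because it is the $s=1$ specialization of that theorem. Your extra observation — that for a single self-adjoint $\mW=\mX^*\mX$ every real polynomial $Q$ already makes $Q(\mW)$, hence $X_N$, self-adjoint, so the ``real symmetric'' hypothesis of Theorem \ref{T4} is automatic — is the one small point the paper leaves tacit, and you have verified it correctly.
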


For $q=1$, the traces commute, and their joint moments coincide with the moments of traces of the corresponding
 real Wishart matrices \eqref{Def gW}; taking
$Q(\mW^{}_1,\dots,\mW^{}_s)=\sum_{j=1}^k a_j \prod_{r=1}^j\mW_{t(r)}$,
  by
the Cramer-Wald device (\cite[Theorem 29.4]{Billingsley-95})
we deduce the following multimatrix generalization of {\cite[Theorem 2]{Arharov-71},
 \cite[Theorem 4.1]{Jonsson-82}} which
shows that \cite[Corollary 9.4]{Mingo-Nica-04} holds also for real Wishart matrices.
\begin{corollary}
  Fix $s\geq 1$ and $t:\NN\to \{1,\dots,s\}$. Suppose $\mW_1,\dots,\mW_s\in\WW{\frac1NI_N,I_M}$ are independent.
  For a fixed  $k\geq 1$ that does not depend on $N$, and for
  $1\leq j\leq k$ consider real
  random variables
 $$
X_{j}^{(N)}=\tr\left(\prod_{r=1}^j\mW_{t(r)}\right)-\E\left(\tr(\prod_{r=1}^j\mW_{t(r)})\right).
  $$
Then as $N\to\infty$, $M/N\to\la\in[0,\infty)$, the sequence of $k$-dimensional random variables
$$ \left(X_{j}^{(N)}\right)_{1\leq j\leq k}$$
converges in distribution to the multivariate normal law.
\end{corollary}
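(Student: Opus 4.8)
The plan is to deduce the corollary from the $q=1$ case of Theorem~\ref{T4} together with the Cram\'er--Wold device. Fix an arbitrary $a=(a_1,\dots,a_k)\in\RR^k$ and let $Q_a$ be the non-commutative polynomial in $s$ variables determined by $Q_a(\mW_1,\dots,\mW_s)=\sum_{j=1}^k a_j\prod_{r=1}^j\mW_{t(r)}$; note that $Q_a$ does not depend on $N$. By linearity of the trace, $\tr(Q_a(\mW_1,\dots,\mW_s))-\E(\tr(Q_a(\mW_1,\dots,\mW_s)))=\sum_{j=1}^k a_jX_j^{(N)}$, so it is enough to show that for every such $a$ this linear combination converges in distribution to a centered Gaussian whose variance is a quadratic form in $a$.

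First I would invoke Theorem~\ref{T4} at $q=1$. A $q$-Gaussian family with $q=1$ has the moments of a multivariate normal, hence is realized by one (the Remark on the case $q=1$); consequently a $q$-orthogonal family in $\WWq{\frac1NI_N,I_M}$ with $q=1$ may be taken to be precisely the independent real Wishart matrices $\mW_1,\dots,\mW_s\in\WW{\frac1NI_N,I_M}$ of the corollary. These commute and are self-adjoint, so the trace of every monomial $\prod_{r=1}^j\mW_{t(r)}$ is a real random variable and $\sum_{j=1}^k a_jX_j^{(N)}$ is self-adjoint. Theorem~\ref{T4} therefore applies: its first part yields convergence in moments, and, since $q=1$, its second part yields that $\sum_{j=1}^k a_jX_j^{(N)}$ converges in distribution to a normal law $N(0,\sigma_a^2)$ for some $\sigma_a^2\ge 0$.

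Next I would assemble the one-dimensional limits. Convergence in moments of $\sum_{j=1}^k a_jX_j^{(N)}$ for every $a$ (in particular for $a=e_i$ and $a=e_i+e_j$, where $e_l$ are the standard basis vectors of $\RR^k$) forces the limits $C_{ij}:=\lim_N\mathrm{Cov}(X_i^{(N)},X_j^{(N)})$ to exist; by bilinearity of covariance, $\sigma_a^2=\lim_N\mathrm{Var}(\sum_{j=1}^k a_jX_j^{(N)})=\sum_{i,j}a_ia_jC_{ij}$, and $C=(C_{ij})_{i,j=1}^k$ is positive semidefinite as a limit of covariance matrices. Hence $N(0,\sigma_a^2)$ is exactly the law of $a\cdot Y$ for a centered Gaussian vector $Y$ with covariance $C$, and by the Cram\'er--Wold theorem \cite[Theorem 29.4]{Billingsley-95} the random vectors $(X_j^{(N)})_{1\le j\le k}$ converge in distribution to $Y$ --- the asserted multivariate normal law.

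I do not expect a genuine obstacle; the corollary is a soft consequence of Theorem~\ref{T4}. The two points that need a little care are (i) the identification of the $q=1$ $q$-Wishart model with the classical real Wishart model, which is immediate from the Remark on $q=1$ and lets the conclusion of Theorem~\ref{T4} be transported to the classical matrices; and (ii) verifying the hypotheses of the Cram\'er--Wold device, i.e. that the one-dimensional limiting variances $\sigma_a^2$ fit together as a positive semidefinite quadratic form $\sum_{i,j}a_ia_jC_{ij}$ --- which, as indicated, is automatic once $\sum_j a_jX_j^{(N)}$ is known to converge in moments for every $a$.
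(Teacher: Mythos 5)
Your proposal is correct and takes essentially the same route as the paper: the corollary is derived from Theorem~\ref{T4} at $q=1$ by applying it to the linear combinations $Q_a=\sum_{j=1}^k a_j\prod_{r=1}^j\mW_{t(r)}$ and invoking the Cram\'er--Wold device, using the identification of $q=1$ $q$-Wishart moments with classical real Wishart moments. You simply spell out the assembly of the one-dimensional limits into a positive semidefinite covariance matrix, which the paper leaves implicit.
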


The paper is organized as follows.
In Section \ref{Sect 1} we introduce notation,
state our results on mixed moments of traces of monomials in independent Wishart matrices and on mixed moments of
 traces of monomials
 in $q$-orthogonal $q$-Wishart matrices,
  and we deduce
 some of the related expressions available in the literature.
Section \ref{Sect Proof 1} contains proofs of the formulas for moments.
Section \ref{Sect Proof of T4} contains some auxiliary results, and the proof of Theorem \ref{T4}.

%

\section{Moments of  real Wishart and $q$-Wishart matrices}\label{Sect 1}

Explicit formulas for moments of polynomials in one real Wishart matrix appear in
\cite{Graczyk-Letac-Massam-05,Graczyk-Vostrikova-06,Hanlon-Stanley-Stembridge-92,Lu-Richards-01}.
The formulas are more complicated than the formulas for the
complex case \cite{Graczyk-Letac-Massam-03,Hanlon-Stanley-Stembridge-92,Mingo-Nica-04}, and often
involve sophisticated tools, like wreath products, Jack polynomials, and properties of hyperoctahedral group.

One of the main results of this paper is the
 non-asymptotic
expression for
 moments of traces of monomials in several independent real Wishart matrices, and its noncommutative
 $q$-generalization.  In Section \ref{Sect Proof of T4} we use the moment formula to prove Theorem \ref{T4}, but  since the formula is of independent interest  we consider a more general case than what we need for the proof.
Our approach follows closely \cite[Theorem 2]{Bryc-06}: it relies on Wick formula,
combinatorics from \cite{Mingo-Nica-04} and the Brauer product \cite{Brauer-1937} of pair partitions.
The connection of the Brauer algebra to integration over the orthogonal group  has already been noted in
\cite{Collins-Sniady-06} who seem not to use
the Brauer
product explicitly.
With the use of the Brauer product,
 our formula closely resembles the formula for the complex
 case.

Following the referee's comment we remark that we use
Brauer algebra as a convenient notational replacement for the permutation group, but since we
multiply only by elements from $\calF_n^+$, our use  of the Brauer
 structure is in fact the action of $S_n$ on $2n$-pair partitions, as in \cite{Graczyk-Letac-Massam-05}.
In particular, the constant of the Brauer algebra does not enter our formulas.


After the first version of this paper was written,
we learned from J. Mingo about his
work \cite{mingo-2008} with Emily Redelmeier who has found a very elegant
diagrammatic interpretation of the right hand side of
equation \eqref{Main formula}.

\subsection{Pair partitions, permutations, Brauer product}
Consider matrix variables $\mx_1,\mx_2,\dots,\mx_s$ which we interpret as variables of ``colors"
$1, \dots,s$.
We are interested in  monomials $p_{\gamma,t}(\mx_1,\mx_2,\dots,\mx_s)$
of degree $n$ which we parameterize   by
perfect matchings (pair partitions) $\gamma$ of $\{\pm1,\pm2,\dots,\pm n\}$,  and
by functions $t:\{1,\dots,n\}\to\{1,\dots,s\}$, which we interpret as assigning ``colors" to the integers.
To introduce these polynomials in formula \eqref{r} below we need
 additional notation and terminology.
\subsubsection{Pair partitions}
Let $\calF_n$ denote the set of pair partitions (i.e., partitions into two-element sets)
of the $2n$-element set $\{\pm 1,\pm 2,\dots,\pm n\}$.  Ref. \cite{Goulden-Jackson-96} uses
the notation $\{\hat{1},1,\hat{2},2,\dots,\hat{n},n\}$ for the same object - the ordered
 set of $2n$ elements with
the distinguished match.
Other authors interpret $\calF_n$ as the set of
 1-factors, or perfect matchings, or 1-regular graphs on the vertices $\{\pm 1,\pm 2,\dots,\pm n\}$.

Identifying the pairs $\{i,j\}$ of a pair partition $\gamma\in\calF_n$ with the cycles of the permutation,
we can embed $\calF_n\subset \Sn_{\{\pm 1,\dots,\pm n\}}\cong\Sn_{2n}$.
Here $\Sn_F$ denotes the group of permutations (i.e., bijections with composition) of a finite set F.
As usual $\Sn_n=\Sn_{\{1,\dots,,n\}}$. In the embedding, $\calF_n$ is the set of all involutions on $\{\pm 1,\dots,\pm n\}$
with no fixed points.
Accordingly,  we write $\gamma(i)=j$ if $\{i,j\}\in\gamma$.\

We will also find it convenient to represent $\gamma$ as a graph with vertices arranged in
two rows
$$\begin{array}{rrrr}
  1 &2 &\dots & n \\
  -1 & -2 &\dots& -n
\end{array}$$
with the edges drawn between the vertices in each pair of  partition $\gamma$.
For example,  a pair partition $\delta$ which pairs $j$ with $-j$ for $j=1,\dots,n$
is  identified with the permutation $\delta(j)=-j$, or with the graph

\begin{equation}
  \label{eq:delta}
  \delta=\begin{matrix}
  \xymatrix  @-1pc{
 {^1_ \bullet} \ar@{-}[d]& {^2_\bullet}
  \ar@{-}[d]& \dots & {^n_\bullet} \ar@{-}[d]\\
 {^{\hspace{1.5mm}\bullet}_{-1}} &{^{\hspace{1.5mm}\bullet}_{-2}} &\dots &
{^{\hspace{1.5mm}\bullet}_{-n}} \\
}
\end{matrix}
\end{equation}
In fact, $\delta$ is the distinguished matching on $\{\pm 1,\dots,\pm n\}$ and
 plays a special role in several definitions that follow.

With each $\gamma\in\calF_n$ we associate
the permutation $\pi(\gamma)\in\Sn_n$ and the sequence $\eps(\gamma)$ of $\pm$.
These mappings $\pi:\calF_n\to\Sn_n$ and $\eps:\calF_n\to \{-,+\}^n$ are defined in terms of graphs as follows.

Consider the 2-regular graph $\delta\cup \gamma$.
To define the first cycle of permutation $\alpha:=\pi(\gamma)$,
 start with vertex $-1$. Follow upwards the first edge of $\delta$ towards $1$, and then continue
 by following
 the consecutive edges of the graph $\delta\cup \gamma$.
The first cycle of
$\alpha$ consists of the vertices from  the upper row $\{1,2,\dots,n\}$ that were
 visited by this path, taken in the order in which they were visited.
The corresponding values
of $\eps(\gamma)$ on this cycle are  determined by the direction of traversing the edges of $\delta$:
we assign $+$ if the corresponding edge of $\delta$ was traversed upwards, and $-$ otherwise.
(In particular, the sequence $\eps(\gamma)$ of signs always starts with $+$.)

The next cycle of $\pi(\gamma)$ consists of the entries in the first row, starting with the leftmost element $j$ of the first row that was has not yet been
visited. To determine the cycle, we again start at the corresponding vertex $-j$ from the lower row, follow upwards the edge of $\delta$ towards
 $j$, and continue in the same direction along the edges of $\delta\cup \gamma$ until we complete the second cycle.
We then repeat these steps until we exhaust all the elements of the first row.
(Similar construction appears in Refs.
\cite{Goulden-Jackson-96,Graczyk-Letac-Massam-05,Hanlon-Stanley-Stembridge-92}.)

For example, suppose
$$
\gamma=\{\{1,2\}, \{3, -4\}, \{-1,-2\},\{-3,4\} \}=\begin{matrix}
  \xymatrix  @-1pc{
 \bullet \ar@{-}[r]&   \bullet & \bullet\ar@{-}[dr] &\bullet\ar@{-}[dl]\\
 \bullet \ar@{-}[r]&\bullet &\bullet &\bullet
}
\end{matrix}
$$
Then
$$
\delta\cup\gamma=\begin{matrix}
  \xymatrix  @-1pc{
 \bullet \ar@{-}[r] \ar@{<-}[d]&   \bullet & \bullet\ar@{-}[dr] &\bullet\ar@{-}[dl]\\
 \bullet \ar@{-}[r]&\bullet\ar@{-}[u]  &\bullet\ar@{->}[u]  &\bullet\ar@{-}[u]
}
\end{matrix}
$$
Here  the edges for the initial vertices of each cycle are oriented in the direction of the "first step".
Thus $\pi(\gamma)=(1,2)(3,4)$ and $\eps(\gamma)=(+,-,+,+)$.

Next, we introduce partitions that will play a role in Section \ref{Sect NC}. 
\begin{example} For $\la\vdash n$, i.e. for $\la=(\la_1\geq \la_2\geq\dots \geq 0)$ with $\sum \la_j=n$,
 consider
\begin{equation}\label{sigma_la}
\sigma_\la=\left\{\begin{array}[t]{c}\underbrace{\{1,-2\},\{2,-3\},\dots,\{\la_1,-1\}}\\ \la_1\mbox{ pairs}\end{array},
\begin{array}[t]{c}\underbrace{\{\la_1+1,-\la_1-2\},\dots,
\{\la_1+\la_2,-\la_1-1\}}\\ \la_2 \mbox{ pairs}\end{array},\dots\right\}.
\end{equation}
Then
$$
\sigma_\la=\begin{matrix}
 \xymatrix  @-1pc{
 {^1_ \bullet} \ar@{-}[dr]& {^2_\bullet} \ar@{-}[dr]& {^3_\bullet}
 & \dots & {^{\la_1}_\bullet}\ar@{-}[dllll] & {^{\la_1+1}_{\hspace{2.5mm}\bullet}}\ar@{-}[dr] & {^{\la_1+2}_{\hspace{3mm}\bullet}}&\dots & {^{\la_1+\la_2}_{\hspace{3.7mm}\bullet}}\ar@{-}[dlll]&
  {^{\la_1+\la_2+1}_{\hspace{5mm}\bullet}}&\dots \\
 {^{\hspace{1.5mm}\bullet}_{-1}} &{^{\hspace{1.5mm}\bullet}_{-2}} &{^{\hspace{1.5mm}\bullet}_{-3}}&\dots &
{^{}_\bullet}& {^{}_\bullet} & {^{}_\bullet}&\dots&{^{}_\bullet}&{^{}_\bullet}&\dots
}
\end{matrix}
$$
and
$$
\sigma_\la\cup\delta=\begin{matrix}
 \xymatrix  @-1pc{
 {^1_ \bullet} \ar@{-}[dr]& {^2_\bullet} \ar@{-}[dr]& {^3_\bullet}
 & \dots & {^{\la_1}_\bullet}\ar@{-}[dllll] & {^{\la_1+1}_{\hspace{2.5mm}\bullet}}\ar@{-}[dr] & {^{\la_1+2}_{\hspace{3mm}\bullet}}&\dots & {^{\la_1+\la_2}_{\hspace{3.7mm}\bullet}}\ar@{-}[dlll]& {^{\la_1+\la_2+1}_{\hspace{5mm}\bullet}}&\dots \\
 {^{\hspace{1.5mm}\bullet}_{-1}}\ar@{->}[u] &{^{\hspace{1.5mm}\bullet}_{-2}}\ar@{-}[u] &{^{\hspace{1.5mm}\bullet}_{-3}}\ar@{-}[u]&\dots &
{^{}_\bullet}\ar@{-}[u]& {^{}_\bullet} \ar@{->}[u]& {^{}_\bullet}\ar@{-}[u]&\dots&{^{}_\bullet}\ar@{-}[u]& {^{}_\bullet} \ar@{->}[u]&\dots
}
\end{matrix}
$$
Thus
$\pi(\sigma_\la)=(1,2,\dots,\la_1)(\la_1+1,\dots,\la_1+\la_2),\dots$
has cycle type $\la$.
In particular, $\delta=\sigma_{1^n}$ and $\pi(\delta)=id$. Similarly, $\pi(\sigma_n)=(1,2,\dots,n)$.
\end{example}

\begin{definition}
  By $\calF_n^+$ we denote the set of pair partitions $\sigma\in\calF_n$ in which the vertices of
the upper row are always connected to vertices of the lower row.
Such pair partitions are characterized by the fact that
$\eps=(+,+,\dots,+)$.
\end{definition}
\begin{remark}\label{Remark: bijection} Brauer \cite{Brauer-1937} identifies the elements of $\calF_n^+$
with permutations,
as $\pi:\calF_n^+\to \Sn_n$ is a bijection. For $\sigma\in\calF_n^+$, the definition of
$\pi(\sigma)$ simplifies to
\begin{equation}
  \label{pi on F+}
  \pi(\sigma)(j):=-\sigma(j), \;j=1,2,\dots,n.
\end{equation}
The inverse mapping
$\Sn_n\ni \alpha\mapsto \sigma\in\calF_n^+$ is then
$
\sigma(j)=-\alpha(j)$.
\end{remark}
\subsubsection{Brauer product of pair partitions}
The Brauer product of pair partitions has simple description in terms of graphs.
To define $\sigma\circledcirc \gamma$ we draw the two rows for $\gamma$, then we draw the graph for $\sigma$
so that its upper row covers the lower row of $\gamma$. We construct the new two-row graph
for $\sigma \circledcirc \gamma$ by removing the middle row, but retaining the edges between the
vertices of the first and third row that were connected in the three-row graph.
Brauer \cite{Brauer-1937} weights the resulting graph by a coefficient
that depends on the number of cycles lost in the middle row; we will apply
the Brauer product only when there are no such cycles, so our weight will always be 1.

\begin{example}
Suppose
$$
\sigma=\{\{1,-2\}, \{2,-3\},\{3, -4\}, \{4,-1\} \}=\begin{matrix}
  \xymatrix  @-1pc{
 \bullet \ar@{-}[rd]&   \bullet \ar@{-}[rd]& \bullet\ar@{-}[dr] &\bullet\ar@{-}[dlll]\\
 \bullet &\bullet &\bullet &\bullet
}
\end{matrix}
$$
$$
\gamma=\{\{1,2\}, \{3, -4\}, \{-1,-2\},\{-3,4\} \}=\begin{matrix}
  \xymatrix  @-1pc{
 \bullet \ar@{-}[r]&   \bullet & \bullet\ar@{-}[dr] &\bullet\ar@{-}[dl]\\
 \bullet \ar@{-}[r]&\bullet &\bullet &\bullet
}
\end{matrix}
$$

The intermediate three-row graph is
$$
 \xymatrix  @-1pc{
 \bullet \ar@{-}[r]&   \bullet & \bullet\ar@{-}[dr] &\bullet\ar@{-}[dl]\\
 \bullet \ar@{-}[r]\ar@{-}[rd]&\bullet\ar@{-}[rd] &\bullet\ar@{-}[rd] &\bullet\ar@{-}[dlll]\\
 \bullet &\bullet &\bullet &\bullet
}
$$
This gives
$$
\sigma \circledcirc \gamma=\begin{matrix}
  \xymatrix  @-1pc{
 \bullet \ar@{-}[r]&   \bullet & \bullet\ar@{-}[dll] &\bullet\ar@{-}[d]\\
 \bullet &\bullet \ar@{-}[r]&\bullet &\bullet
}
\end{matrix}=\{\{1,2\},\{-2,-3\},\{-1,3\},\{-4,4\}\}.
$$
\end{example}

 Our formulas
  need only the Brauer products with $\sigma\in\calF_n^+$, and in this case the analytical definition takes the following form.
\begin{definition}\label{Def B-comp}  For $\sigma\in\calF_n^+$
and $\gamma\in\calF_n$, the Brauer product \cite{Brauer-1937}
$\sigma \circledcirc \gamma$ is defined as an element of $\calF_n$ such that
\begin{equation}
  \label{B-conv}
  (\sigma \circledcirc \gamma)(j)=\begin{cases}
  \sigma(-\gamma(j)) & \mbox{if } j>0,\; \gamma(j)<0, \\
 \gamma(j) &  \mbox{if } j>0,\; \gamma(j)>0, \\
     \sigma(-\gamma(-\sigma(j)))&  \mbox{if } j<0,\;  \gamma(-\sigma(j))<0,\\
        \gamma(-\sigma(j)) &  \mbox{if }   j<0,\; \gamma(-\sigma(j))>0.
\end{cases}
\end{equation}
\end{definition}
(These are the same cases that are listed in Table \ref{Tbl2}.)

\begin{remark}[Brauer] If $\sigma_1,\sigma_2\in\calF_n^+$ then $\sigma_1\circledcirc\sigma_2\in\calF_n^+$
and
 \begin{equation}\label{pi-composition}
\pi(\sigma_1\circledcirc\sigma_2)=\pi(\sigma_1)\circ\pi(\sigma_2)\end{equation}
 becomes just the ordinary composition of permutations.
\end{remark}

We also remark that $\delta\circledcirc\gamma=\gamma$ and, when the more general definition is used,
$\gamma\circledcirc\delta=\gamma$.


It will be convenient to have concise notation for certain objects and properties
of $\gamma\in\calF_n$ which depend only through the associated permutation $\pi(\gamma)\in\Sn_n$.
\begin{definition}
  \label{Def cycle}
  For a ``coloring" $t:\{1,\dots,n\}\to\{1,\dots s\}$, by
  $$\calF_n(t)=\{\gamma\in\calF_n: t(|\gamma(j)|)=t(|j|), j=\pm 1,\dots,\pm n\}$$
  we denote the set of color-preserving pair partitions. Such partitions are characterized by the property that
  the permutation $\alpha:=\pi(\gamma)$ is color-preserving,
  $t\circ\alpha= t$, see \cite[(4)]{Bryc-06}.
\end{definition}
By $\sC(\gamma)$ we denote the set of cycles of permutation $\pi(\gamma)$.
For $\gamma\in\calF_n(t)$, all cycles have the same color;
by $\sC_j(\gamma)$ we denote the set of cycles of $\pi(\gamma)$ which are of color $j\in\{1,\dots,s\}$.

For a $\gamma\in\calF_n(t)$, we can color each edge of $\gamma$ by the color of
the adjacent vertices.
Since for $\sigma\in\calF_n^+$ each edge of
$\sigma\circledcirc\gamma$ arises from combining exactly one edge of $\gamma$ with some of the edges of $\sigma$,
we can color each edge of $\sigma\circledcirc\gamma$ by the color inherited from this
 unique edge of $\gamma\in\calF_n(t)$.

\begin{definition}\label{Def t-modified}
For $\gamma\in\calF_n(t)$, $\sigma\in\calF_n^+$,
 by $t(\sigma,\gamma)$ we denote the coloring of $\{1,2,\dots,n\}$ obtained as follows.
 For $1\leq i\leq n$, we define
 $t(\sigma,\gamma)(i)$ as the color of the edge of $\sigma\circledcirc\gamma$ which is crossed on
 $\delta\cup (\sigma\circledcirc\gamma)$ while
 connecting $i$ to the next element of the corresponding cycle of $\pi(\sigma\circledcirc\gamma)$.
 \end{definition}
\subsubsection{Formula for moments}

For a ``coloring" $t:\{1,\dots,n\}\to\{1,\dots s\}$, a pair partition $\gamma\in\calF_n$, and
matrix-variables $\mx_1,\mx_2,\dots,\mx_s$,
define monomial
\begin{equation}
  \label{r}
p_{\gamma,t}(\mx_1,\mx_2,\dots,\mx_s)=\prod_{c\in \sC(\gamma)}\tr\left(\prod_{j\in c} \mx_{t(j)}\right).
\end{equation}
Here the notation $\tr\left(\prod_{j\in c} \mx_{t(j)}\right)$ is to be interpreted as
the trace of the product of the matrices taken in the same order in which consecutive integers
appear in the cycle $c$.

For example, 
 $\tr^2(\mx_1\mx_2)=p_{\sigma,t}(\mx_1,\mx_2)$ with
 \begin{equation}
   \label{Ex0}
   \sigma=\begin{matrix}\xymatrix{
   \bullet \ar@{-}[dr] & \bullet \ar@{-}[dl] & \bullet \ar@{-}[dr] & \bullet \ar@{-}[dl]
    \\
    \bullet &\bullet &\bullet &\bullet }\end{matrix},\;  t(j)=(3+(-1)^j)/2,\; j=1,2,3,4.
 \end{equation}
Clearly, such representation is not unique, as we can realize
 the same polynomial $p_{\sigma,t}$ by combining an appropriate mapping $t$ with
 any element $\gamma\in\calF_n$
 such that $\pi(\sigma)$ and
$\pi(\gamma)$ are in the same conjugacy class.
The generic construction is to start with a partition $\la\vdash n$,
choose  $\sigma_\la\in \calF_n^+$ according to \eqref{sigma_la}, and then define a
suitable function
$t$.
This generic construction indicates that the monomials $p_{\gamma,t}$
are essentially the monomials that appear in \cite[Theorem 2]{Bryc-06},
with $h\equiv I$.

Identifying $\mx^{+}$ with $\mx$ and $\mx^{-}$ with the transpose $\mx'$ of a square matrix $\mx$, define
\begin{equation}
  \label{qr}
q_{\gamma,t}(\mx_1,\mx_2,\dots,\mx_s)=\prod_{c\in \sC(\gamma)}\tr\left(\prod_{j\in c} \mx_{t(j)}^{-\eps_j(\gamma)}\right).
\end{equation}
Of course, if matrix variables
$\mx_1,\dots,\mx_s$ are symmetric, then
$q_{\gamma,t}(\mx_1,\mx_2,\dots,\mx_s)=p_{\gamma,t}(\mx_1,\mx_2,\dots,\mx_s)$.
Trivially, for general matrix variables the polynomials coincide when $\gamma\in\calF_n^+$.

With the above notation, we  state the main result of this section.
\begin{theorem}
  \label{T1} Let $B_1,\dots,B_s\in\calM_{M\times M}$ and let
  $\Sigma_1,\dots,\Sigma_s$ be $N\times N$ positive definite matrices.
  If $\mW_1\in\WW{\Sigma_1,B_1},\mW_2\in\WW{\Sigma_2,B_2},\dots,\mW_s\in\WW{\Sigma_s,B_s}$
  are independent  and $\sigma\in\calF_n^+$, then
\begin{equation}
  \label{Main formula}
\E\left(p_{\sigma,t}(\mW_1,\mW_2,\dots,\mW_s)\right)=\sum_{\gamma\in \calF_n(t)}
q_{\gamma,t}(B_1,\dots,B_s)p_{\sigma\circledcirc\gamma,t(\sigma,\gamma)}(\Sigma_1,\Sigma_2,\dots,\Sigma_s).
\end{equation}
\end{theorem}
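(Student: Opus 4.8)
The plan is to follow the Wick-expansion argument of \cite[Theorem 2]{Bryc-06}, replacing everywhere the composition of permutations by the Brauer product. First I would substitute $\mW_i=A_i'\mX_i'B_i\mX_i A_i$ (with $A_i'=A_i$, since $A_i$ is a symmetric root) into $p_{\sigma,t}(\mW_1,\dots,\mW_s)$ and expand each trace and each matrix product as a sum over indices, so that every factor $\mW_{t(j)}$ contributes one entry of $B_{t(j)}$, two entries of $A_{t(j)}$, and two entries of $\mX_{t(j)}$; this produces $2n$ Gaussian factors, which I label by $\{\pm1,\dots,\pm n\}$ according to the orientation conventions of Section \ref{Sect 1} (slot $-j$ for the transposed occurrence of $\mX_{t(j)}$, slot $+j$ for the other). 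Taking the expectation and applying the Wick (Isserlis) formula yields a sum over pair partitions of these $2n$ slots; since $\mX_1,\dots,\mX_s$ are independent, a pairing contributes nothing unless it matches slots of equal color, so the sum is over $\gamma\in\calF_n(t)$, and each $\gamma$ replaces the two $\mX$-entries of a pair by the Kronecker deltas coming from $\E([\mX_i]_{d,c}[\mX_i]_{e,f})=\delta_{de}\delta_{cf}$.

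Resolving these deltas, I would split the remaining index sum into the ``$B$-side'' (row indices of the $\mX$'s, ranging over $\{1,\dots,M\}$) and the ``$\Sigma$-side'' (column indices of the $\mX$'s together with the junction indices of the traces, ranging over $\{1,\dots,N\}$). On the $B$-side, inside each $\mW_j$ the entry $[B_{t(j)}]_{d_j,e_j}$ joins the two row indices belonging to slots $-j$ and $+j$; these joins are exactly the distinguished matching $\delta$, while the Wick deltas join row indices according to $\gamma$, so the row-index graph is $\delta\cup\gamma$, whose cycles are by definition those of $\pi(\gamma)$. Summing the products of $B$-entries around each such cycle, and reading $B_{t(j)}$ or its transpose according to the direction in which the edge is traversed along $\delta\cup\gamma$ --- i.e.\ according to $\eps(\gamma)$ --- gives precisely $q_{\gamma,t}(B_1,\dots,B_s)$.

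On the $\Sigma$-side, one checks that the $n$ junction indices of the traces connect the $2n$ outer $A$-entries exactly according to the matching $\sigma$ (this is where Remark \ref{Remark: bijection}, in the form $\sigma(j)=-\pi(\sigma)(j)$, is used), while the Wick column-deltas connect them according to $\gamma$. Because $\gamma\in\calF_n(t)$ is color preserving, each $\gamma$-contraction brings together two $A$-entries of the \emph{same} color $i$, and summing the shared column index turns the pair into one entry of $A_iA_i=A_i'A_i=\Sigma_i$; here symmetry of $A_i$ is essential. Thus the two-row graph $\sigma\cup\gamma$ collapses to a product of traces of monomials in the matrices $\Sigma_1,\dots,\Sigma_s$, one $\Sigma$-factor per $\gamma$-edge. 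It remains to recognize this combinatorial data as the Brauer product: the graph obtained by stacking $\gamma$ over $\sigma$ and deleting the middle row is $\sigma\circledcirc\gamma$, its cycles are the cycles of $\pi(\sigma\circledcirc\gamma)$, and the color of the $\Sigma$-factor met when passing from $i$ to the next element of its cycle is, by construction, $t(\sigma,\gamma)(i)$ (Definition \ref{Def t-modified}); moreover, since $\sigma\in\calF_n^+$, no cycles are lost in forming $\sigma\circledcirc\gamma$, so the Brauer weight is $1$ and no extra factor of $M$ or $N$ appears. Hence the $\Sigma$-side sum is $p_{\sigma\circledcirc\gamma,\,t(\sigma,\gamma)}(\Sigma_1,\dots,\Sigma_s)$, and multiplying the $B$-side and $\Sigma$-side contributions and summing over $\gamma\in\calF_n(t)$ yields \eqref{Main formula}.

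The step I expect to be the real work is this last identification on the $\Sigma$-side: checking that the junction indices realize $\sigma$ and the Wick pairing realizes $\gamma$ in a manner compatible with the graphical Brauer product, and that the induced coloring is exactly $t(\sigma,\gamma)$. This is where the combinatorics of \cite{Mingo-Nica-04} enters, and the real case is heavier than the complex one precisely because the Wick pairings are unconstrained --- a ``row'' slot may be matched with a ``column'' slot --- which is what forces the transposes and the signs $\eps(\gamma)$ into $q_{\gamma,t}$ and lets $\gamma$ range over all of $\calF_n(t)$ rather than only over $\calF_n^+$.
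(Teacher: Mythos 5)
Your plan is essentially the paper's own proof: substitute $\mW_i=A_i\mX_i'B_i\mX_iA_i$, apply Wick's formula to get a sum over $\gamma\in\calF_n(t)$, and factor each term into a $B$-side product that realizes $q_{\gamma,t}(B_1,\dots,B_s)$ via the cycles of $\delta\cup\gamma$ and a $\Sigma$-side product that realizes $p_{\sigma\circledcirc\gamma,\,t(\sigma,\gamma)}(\Sigma_1,\dots,\Sigma_s)$ via the Brauer product; these are exactly the paper's Lemma~\ref{L-B} and Lemma~\ref{L-A}. The one thing to be aware of is that the step you flag as ``the real work'' (that the junction indices realize $\sigma$, the Wick contractions realize $\gamma$, and the induced coloring is $t(\sigma,\gamma)$) is precisely Lemma~\ref{L-A}, whose proof in the paper needs a careful case-by-case check (the bijection $\tau$ of Table~\ref{Tbl3a} and the four sign cases), so you should expect to reproduce that bookkeeping rather than treat it as a self-evident diagram manipulation.
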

We postpone the proof to Section \ref{Sect Proof 1}.
In Section 
\ref{Sect NC} we give a noncommutative generalization.
This section concludes with  examples, remarks and corollaries.

\subsubsection{Example of mean and variance calculation}
The purpose of this section is to illustrate \eqref{Main formula} and also to show how the coloring
 $t(\sigma,\gamma)$ works  in  a simple case.

\begin{example}\label{Ex1} Under the assumptions of Theorem \ref{T1},
 formula \eqref{Main formula}   gives
$$\E\left(\tr(\mW_1\mW_2\dots\mW_s)\right)=\tr(B_1)\tr(B_2)\dots\tr(B_s)\tr(\Sigma_1\Sigma_2\dots\Sigma_s).$$
Indeed,  with $\sigma=\{(1,-2),(2,-3),(s-1,-s),(s,-1)\}$ and
$t(j)=j$, we get $\calF_s(t)=\{\delta\}$. Since $\sigma\circledcirc\delta=\sigma$ and $t(\sigma,\delta)=t$,
 we get
$$p_{\sigma\circledcirc\gamma,t(\sigma,\gamma)}(\mx_1,\dots,\mx_s)=
p_{\sigma,t}(\mx_1,\dots,\mx_s)=\tr(\mx_1\dots\mx_s),$$
and
$$q_{\delta,t}(\mx_1,\dots,\mx_s)=\tr(\mx_1')\dots\tr(\mx_s')=\tr(\mx_1)\dots\tr(\mx_s).$$
\end{example}
\begin{example}
  Suppose $\mW_1,\mW_2$ are as in Theorem \ref{T1}. From Example \ref{Ex1} we know that
 $\E(\tr(\mW_1\mW_2))=\tr(B_1)\tr(B_2)\tr(\Sigma_1\Sigma_2)$. To compute the variance of $\tr(\mW_1\mW_2)$ we therefore
 compute $\E(\tr^2(\mW_1\mW_2))$ using \eqref{Main formula} for $n=4$ with $\sigma,t$ as defined in \eqref{Ex0}.
{\footnotesize
\begin{table}[h]
\caption{\label{Table1}Terms in \eqref{Main formula} for calculating
 $\E(\tr^{2}(\mW_1\mW_{\color{blue}2}))$; here $\sigma$ is given by \eqref{Ex0}.
The second column  is an illustration for Section \ref{Sect NC}.}
\begin{tabular}{||c|c|c|c|c|c||}\hline\hline
$\gamma$ &$\CR(\gamma)$ &$\sigma\circledcirc\gamma$ &$\pi(\gamma)$ & $\pi(\sigma\circledcirc\gamma)$ &contribution \\ \hline\hline
$\begin{array}{c}\xymatrix @-1pc{
   \bullet \ar@{-}[d] & \bullet \ar@{-}[d] & \bullet \ar@{-}[d] & \bullet \ar@{-}[d]
    \\
    \bullet  &\bullet &\bullet &\bullet }\end{array}$
 & 0   & $ \begin{array}{c}\xymatrix @-1pc{
   \bullet\ar@{-}[dr]  & \bullet\ar@{-}[dl]  & \bullet\ar@{-}[dr]   & \bullet\ar@{-}[dl]
    \\
    \bullet &\bullet &\bullet &\bullet }\end{array}$&$(1)({\color{blue}2})(3)({\color{blue}4})$&
    $(1,{\color{blue}2})(3,{\color{blue}4})$
& $\tr^2(B_1)\tr^2(B_2)\tr^2(\Sigma_1\Sigma_2)$
    \\
    \hline 
$ \begin{array}{c}\xymatrix @-1pc{
   \bullet \ar@{-}[d] & \bullet \ar@/^/@{-}[rr]  & \bullet \ar@{-}[d] & \bullet
    \\
    \bullet  &\bullet \ar@/^/@{-}[rr] &\bullet &\bullet }\end{array}$
    &1& $ \begin{array}{c}\xymatrix @-1pc{
   \bullet\ar@{-}[dr]  & \bullet\ar@/^/@{-}[rr]  & \bullet\ar@{-}[dr]   & \bullet
    \\
    \bullet\ar@/^/@{-}[rr] &\bullet &\bullet &\bullet }\end{array}$
    &$(1)({\color{blue}2},{\color{blue}4})(3)$&$(1,{\color{blue}2},{4},{\color{blue}3})$ &
    $\tr^2(B_1)\tr(B_2B_2')\tr((\Sigma_1\Sigma_2)^2)$\\
   \hline  
$ \begin{array}{c}\xymatrix @-1pc{
   \bullet\ar@{-}[d]  & \bullet\ar@{-}[drr]  & \bullet \ar@{-}[d] & \bullet \ar@{-}[dll]
    \\
    \bullet  &\bullet &\bullet &\bullet }\end{array}$
    &0& $ \begin{array}{c}\xymatrix @-1pc{
   \bullet\ar@{-}[dr]  & \bullet\ar@{-}[dr]  & \bullet\ar@{-}[dr]   & \bullet\ar@{-}[dlll]
    \\
    \bullet &\bullet &\bullet &\bullet }\end{array}$&$(1)({\color{blue}2},{\color{blue}4})(3)$&
    $(1,{\color{blue}2},{3},{\color{blue}4})$
    &$\tr^2(B_1)\tr(B_2^2)\tr((\Sigma_1\Sigma_2)^2)$\\
 \hline   
$ \begin{array}{c}\xymatrix @-1pc{
   \bullet \ar@/^/@{-}[rr] & \bullet\ar@{-}[d]  & \bullet  & \bullet \ar@{-}[d]
    \\
  \bullet \ar@/^/@{-}[rr]  &\bullet &\bullet &\bullet }\end{array}$
    &1& $ \begin{array}{c}\xymatrix @-1pc{
   \bullet\ar@/^/@{-}[rr]  & \bullet\ar@{-}[dl]  & \bullet   & \bullet\ar@{-}[dl]
    \\
    \bullet &\bullet\ar@/^/@{-}[rr] &\bullet &\bullet }\end{array}$&$(1,3)({\color{blue}2})({\color{blue}4})$&
    $(1,{\color{blue}3},{4},{\color{blue}2})$& $\tr(B_1B_1')\tr^2(B_2)\tr((\Sigma_1\Sigma_2)^2)$\\
    \hline 
$ \begin{array}{c}\xymatrix @-1pc{
  \bullet \ar@/^/@{-}[rr] & \bullet \ar@/^/@{-}[rr]  & \bullet  & \bullet
    \\
  \bullet \ar@/^/@{-}[rr]  &\bullet \ar@/^/@{-}[rr] &\bullet &\bullet }\end{array}$
    &6& $ \begin{array}{c}\xymatrix @-1pc{
   \bullet\ar@/^/@{-}[rr]  & \bullet\ar@/^/@{-}[rr]  & \bullet   & \bullet
    \\
    \bullet\ar@/^/@{-}[rr] &\bullet\ar@/^/@{-}[rr] &\bullet &\bullet }\end{array}$&
    $(1,3)({\color{blue}2},{\color{blue}4})$&$(1,{\color{blue}3})({\color{blue}2},{4})$
    &$\tr(B_1B_1')\tr(B_2B_2')\tr^2(\Sigma_1\Sigma_2)$\\
   \hline  
$ \begin{array}{c}\xymatrix @-1pc{
 \bullet \ar@/^/@{-}[rr]  & \bullet\ar@{-}[drr]  & \bullet  & \bullet \ar@{-}[dll]
    \\
  \bullet \ar@/^/@{-}[rr]  &\bullet &\bullet &\bullet }\end{array}$
    &5& $ \begin{array}{c}\xymatrix @-1pc{
   \bullet\ar@/^/@{-}[rr]  & \bullet\ar@{-}[dr]  & \bullet   & \bullet\ar@{-}[dlll]
    \\
    \bullet &\bullet\ar@/^/@{-}[rr] &\bullet &\bullet }\end{array}$&$(1,3)({\color{blue}2},{\color{blue}4})$&
    $(1,{\color{blue}3},{2},{\color{blue}4})$
    &$\tr(B_1B_1')\tr(B_2^2)\tr((\Sigma_1\Sigma_2)^2)$\\
\hline   
$ \begin{array}{c}\xymatrix @-1pc{
   \bullet\ar@{-}[drr] & \bullet\ar@{-}[d]  & \bullet  & \bullet \ar@{-}[d]
    \\
  \bullet \ar@{-}[urr]  &\bullet &\bullet &\bullet }\end{array}$
    &0& $ \begin{array}{c}\xymatrix @-1pc{
   \bullet\ar@{-}[drrr]  & \bullet\ar@{-}[dl]  & \bullet\ar@{-}[dl]   & \bullet\ar@{-}[dl]
    \\
    \bullet &\bullet &\bullet &\bullet }\end{array}$&
    $(1,3)({\color{blue}2})({\color{blue}4})$&
    $(1,{\color{blue}4},3,{\color{blue}2})$
    &$\tr(B_1^2)\tr^2(B_2)\tr((\Sigma_1\Sigma_2)^2)$\\
    \hline 
$ \begin{array}{c}\xymatrix @-1pc{
 \bullet\ar@{-}[drr] & \bullet \ar@/^/@{-}[rr]  & \bullet  & \bullet
    \\
  \bullet \ar@{-}[urr] &\bullet \ar@/^/@{-}[rr] &\bullet &\bullet }\end{array}$
    &5& $ \begin{array}{c}\xymatrix @-1pc{
   \bullet\ar@{-}[drrr]  & \bullet\ar@/^/@{-}[rr]  & \bullet\ar@{-}[dl]   & \bullet
    \\
    \bullet\ar@/^/@{-}[rr] &\bullet &\bullet &\bullet }\end{array}$&
    $(1,3)({\color{blue}2},{\color{blue}4})$&$(1,{\color{blue}4},{2},{\color{blue}3})$
    &$\tr(B_1^2)\tr(B_2B_2')\tr((\Sigma_1\Sigma_2)^2)$\\
   \hline  
$ \begin{array}{c}\xymatrix @-1pc{
 \bullet\ar@{-}[drr]  & \bullet\ar@{-}[drr]  & \bullet  & \bullet \ar@{-}[dll]
    \\
  \bullet \ar@{-}[urr]  &\bullet &\bullet &\bullet }\end{array}$
    &4& $ \begin{array}{c}\xymatrix @-1pc{
   \bullet\ar@{-}[drrr]  & \bullet\ar@{-}[dr]  & \bullet\ar@{-}[dl]   & \bullet\ar@{-}[dlll]
    \\
    \bullet &\bullet &\bullet &\bullet }\end{array}$&$(1,3)({\color{blue}2},{\color{blue}4})$&
    $(1,{\color{blue}4})({2},{\color{blue}3})$

    &$\tr(B_1^2)\tr(B_2^2)\tr^2(\Sigma_1\Sigma_2)$\\
     \hline \hline
\end{tabular}
\end{table}
}
Nine possible $\gamma\in\calF_4(t)$ and their contributions are listed in Table \ref{Table1}.
Thus
\begin{multline*}
  \mbox{\rm Var}(\mW_1\mW_2)=(\tr(B_1B_1')\tr(B_2B_2')+\tr(B_1^2)\tr(B_2^2))\tr^2(\Sigma_1\Sigma_2)\\
  +\big(\tr(B_1B_1')\tr(B_2^2)+\tr(B_1^2)\tr(B_2B_2')
  +\tr^2(B_1)\tr(B_2^2)+\tr^2(B_1)\tr(B_2B_2')\\+
  \tr(B_1^2)\tr^2(B_2)+\tr(B_1B_1')\tr^2(B_2)
  \big)\tr((\Sigma_1\Sigma_2)^2).
\end{multline*}
With $B_j=\left[\begin{matrix}
I_{M_j}&0\\
0&0
\end{matrix}\right]$ and $\Sigma_j=I_N$ this gives
$$
\mbox{\rm Var}(\mW_1\mW_2)=2M_1M_2N^2+2M_1M_2(M_1+M_2+1)N.
$$
(See also Example \ref{Ex3}.)
\end{example}
\subsubsection{Remarks}
\begin{remark}
Wishart expansions can also be modeled by planar algebras, see
\cite{guionnet-jones-dima-2007}.
\end{remark}

\begin{remark} Applying \eqref{Def gW} to complex Gaussian $\mX$, one can define the
corresponding ``generalized complex Wishart" matrices with complex-valued $B$ and Hermitian $\Sigma$.
\cite[Theorem 2]{Bryc-06} can be generalized to this setting, and the resulting formula is
\begin{equation}
  \label{Main formula CC}
\E\left(p_{\sigma,t}(\mW_1,\mW_2,\dots,\mW_s)\right)=\sum_{\gamma\in \calF_n^+(t)}
p_{\gamma^{-1},t}(B_1,\dots,B_s)p_{\sigma\circledcirc\gamma,t}(\Sigma_1,\Sigma_2,\dots,\Sigma_s).
\end{equation}
Moreover,  since $\gamma,\sigma\in\calF_n^+$,
we can identify them with the pairs of permutations $\pi(\gamma)$ and $\pi(\sigma)$ respectively;
this identification does  not affect monomials $p_\gamma$ which depend only on the cycle type of
permutation $\pi(\gamma)$.
 When $\eps=(+,\dots,+)$,  we have $q_\gamma=p_\gamma$, and $t(\sigma,\gamma)=t$;
 since the identification maps Brauer product to composition of permutations, see
 \eqref{pi-composition}, we can write this formula directly in terms of permutations:
 \begin{multline*}
    \E\left(p_{\sigma,t}(\mW_1,\mW_2,\dots,\mW_s)\right)=\sum_{\gamma\in \Sn_n(t)}
p_{\gamma^{-1},t}(B_1,\dots,B_s)p_{\sigma\circ\gamma,t}(\Sigma_1,\Sigma_2,\dots,\Sigma_s).
 \end{multline*}

\end{remark}
\begin{proof}
Indeed, in the complex case the only pairings $\gamma$ that contribute to Wick formula
are $\gamma\in\calF_n^+$, so
the proof of \eqref{Main formula} gives
\begin{multline*}
\E\left(p_{\sigma,t}(\mW_1,\mW_2,\dots,\mW_s)\right)=\sum_{\gamma\in \calF_n^+(t)}
q_{\gamma,t}(B_1',\dots,B_s')p_{\sigma\circledcirc\gamma,t}(\Sigma_1,\Sigma_2,\dots,\Sigma_s).
\end{multline*}
Then we note that with $*$ denoting the conjugate transpose, we have
$\tr(AB\dots C)=\overline{\tr(C^*\dots B^* A^*)}$, so  if $c\in\sC(\gamma)$ then
\begin{multline*}
\tr\left(\prod_{j\in c} B'_{t(j)}\right)=\overline{\tr\left(\prod_{j\in c^{-1}} (B'_{t(j)})^*\right)}
=
\overline{\tr\left(\prod_{j\in c^{-1}} \overline{B}_{t(j)}\right)}=
\tr\left(\prod_{j\in c^{-1}}B_{t(j)}\right).
\end{multline*}

\end{proof}

\subsubsection{Special cases of interest}
In this section we list special cases of Theorem \ref{T1}.
Several similar one-matrix cases appeared in the literature,
but without the use of the Brauer product the formulas may look differently at first.

The following specialization of Theorem \ref{T1} shows that  in the real-Wishart case
the Brauer
plays a role analogous to the composition of permutations for the complex-Wishart
case in \cite[Theorem 2]{Bryc-06}.
\begin{corollary}
  \label{T1Ca}
  If $\mW_1\in\WW{\Sigma_1,I_{M_1}},\mW_2\in\WW{\Sigma_2,I_{M_2}},\dots,\mW_s\in\WW{\Sigma_s,I_{M_s}}$
  are independent  and $\sigma\in\calF_n^+$, then
\begin{multline}
  \label{Main formula (A)}
\E\left(p_{\sigma,t}(\mW_1,\mW_2,\dots,\mW_s)\right)
=\sum_{\gamma\in \calF_n(t)}\prod_{j=1}^sM_j^{\#\sC_j(\gamma)}p_{\sigma\circledcirc\gamma,t(\sigma,\gamma)}(\Sigma_1,\Sigma_2,\dots,\Sigma_s).
\end{multline}
\end{corollary}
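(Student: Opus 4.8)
The plan is to derive Corollary \ref{T1Ca} as a direct specialization of Theorem \ref{T1}, simply by substituting $B_j=I_{M_j}$ into formula \eqref{Main formula}. The only work is to identify what the factor $q_{\gamma,t}(I_{M_1},\dots,I_{M_s})$ becomes. First I would recall that, by definition \eqref{qr}, $q_{\gamma,t}(\mx_1,\dots,\mx_s)=\prod_{c\in\sC(\gamma)}\tr\bigl(\prod_{j\in c}\mx_{t(j)}^{-\eps_j(\gamma)}\bigr)$, so for each cycle $c\in\sC(\gamma)$ we get a single trace of a product of identity matrices and their transposes. Since $I_{M_j}'=I_{M_j}$, the sign sequence $\eps(\gamma)$ is irrelevant here, and the product inside the trace over a cycle $c$ is just $I_{M_j}$, where $j$ is the common color of that cycle (recall that for $\gamma\in\calF_n(t)$ all vertices, hence all edges, of a given cycle share one color, as noted after Definition \ref{Def cycle}). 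Thus $\tr\bigl(\prod_{i\in c}\mx_{t(i)}^{-\eps_i(\gamma)}\bigr)\big|_{\mx_k=I_{M_k}}=\tr(I_{M_j})=M_j$.

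Next I would collect these factors over all cycles. Grouping the cycles by color, the number of color-$j$ cycles is exactly $\#\sC_j(\gamma)$ in the notation introduced after Definition \ref{Def cycle}, so
\[
q_{\gamma,t}(I_{M_1},\dots,I_{M_s})=\prod_{c\in\sC(\gamma)}M_{t(c)}=\prod_{j=1}^s M_j^{\#\sC_j(\gamma)}.
\]
Substituting this into \eqref{Main formula} with $B_k=I_{M_k}$ immediately yields \eqref{Main formula (A)}, since the second factor $p_{\sigma\circledcirc\gamma,t(\sigma,\gamma)}(\Sigma_1,\dots,\Sigma_s)$ is unchanged. The hypotheses match: each $I_{M_j}\in\calM_{M\times M}$ after embedding (or one simply applies Theorem \ref{T1} with a common $M=\max_j M_j$ and $B_j$ a projection of rank $M_j$, for which $B_j'=B_j$, $B_j^2=B_j$, so $\tr$ of any word in $B_j,B_j'$ equals $\tr(B_j)=M_j$), and the $\Sigma_j$ are positive definite by assumption.

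The only point requiring a moment's care — and the closest thing to an obstacle — is bookkeeping the colors: one must be sure that ``the color of a cycle'' is well defined for $\gamma\in\calF_n(t)$, which is precisely the content of the remark following Definition \ref{Def cycle} that $t\circ\pi(\gamma)=t$, so $t$ is constant on each cycle of $\pi(\gamma)$; and one must check that the edge-coloring used implicitly in $q_{\gamma,t}$ agrees with this vertex-coloring, which again holds because each edge of $\gamma$ joins two vertices of the same color. Once this is in place the computation is purely formal. I would present the argument in two or three lines: state that we apply Theorem \ref{T1}, compute $q_{\gamma,t}(I_{M_1},\dots,I_{M_s})=\prod_j M_j^{\#\sC_j(\gamma)}$ as above, and conclude.
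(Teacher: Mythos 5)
Your proposal is correct and follows essentially the same route as the paper: apply Theorem~\ref{T1} with the $B_j$ taken as rank-$M_j$ projections padded to a common size $M=\max_j M_j$ (so that all $B_j$ live in $\calM_{M\times M}$ as Theorem~\ref{T1} requires), then observe that $t$ is constant on each cycle of $\pi(\gamma)$ and that $\tr$ of any word in $B_j,B_j'$ along such a cycle reduces to $\tr(B_j)=M_j$, giving $q_{\gamma,t}=\prod_j M_j^{\#\sC_j(\gamma)}$.
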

\begin{proof}Let $M=\max\{M_1,\dots,M_s\}$. We apply Theorem \ref{T1} to $M\times M$ matrices
$$B_j=\left[\begin{matrix}
I_{M_j} & 0 \\
0 & 0
\end{matrix}\right].$$
Since the cycles of $\pi(\gamma)$ preserve $t$, a cycle of color $r$ contributes one factor $\tr(B_{r})=M_{r}$ to $
q_{\gamma,t}(B_1,\dots,B_s)$. Thus  $q_{\gamma,t}(B_1,\dots,B_s)=\prod_{j=1}^sM_j^{\#\sC_j(\gamma)}$.
 \end{proof}

\begin{remark}
For a single real Wishart matrix, Graczyk, Letac \& Massam \cite{Graczyk-Letac-Massam-05} compute moments of
more general monomials
that involve additional arbitrary matrix parameters.
It is plausible that Corollary \ref{T1Ca}
can be deduced from their result by consecutive integration with respect to some of the auxiliary matrix
 parameters.
\end{remark}

The following one-matrix case of Theorem \ref{T1} is a version of \cite[Theorem 3.5]{Hanlon-Stanley-Stembridge-92},
 who did not use  the Brauer product explicitly. (In the one-matrix case $t\equiv 1$ and  $q_{\gamma,t}=p_{\gamma,t}$, so
 we will write $p_{\gamma}$ for both polynomials.)
\begin{corollary}
  \label{T1Cb}
  If $\mW\in\WW{\Sigma,B}$, $B$ is symmetric,  and $\sigma\in\calF_n^+$, then
\begin{equation}
  \label{Main formula (B)}
\E\left(p_{\sigma}(\mW)\right)=
\sum_{\gamma\in \calF_n(t)}
p_{\gamma}(B)p_{\sigma\circledcirc\gamma}(\Sigma).\end{equation}
\end{corollary}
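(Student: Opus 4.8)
The plan is to derive Corollary~\ref{T1Cb} as the direct specialization of Theorem~\ref{T1} (equivalently, of Corollary~\ref{T1Ca}) to the one-matrix case, so there is essentially nothing to prove beyond unwinding notation. First I would set $s=1$, so that every coloring $t:\{1,\dots,n\}\to\{1\}$ is trivial, $\calF_n(t)=\calF_n$, the modified coloring $t(\sigma,\gamma)$ is again trivial, and the monomials collapse: since each cycle $c\in\sC(\gamma)$ involves only the single matrix variable, $q_{\gamma,t}(\mx)=\prod_{c}\tr(\prod_{j\in c}\mx^{-\eps_j(\gamma)})$ and $p_{\gamma,t}(\mx)=\prod_{c}\tr(\prod_{j\in c}\mx)$ both depend only on the cycle type of $\pi(\gamma)$. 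I would then invoke the hypothesis that $B$ is symmetric: for a symmetric matrix $\mx^{-}=\mx'=\mx=\mx^{+}$, so $q_{\gamma,t}(B)=p_{\gamma,t}(B)=p_\gamma(B)$, exactly as the parenthetical remark preceding the corollary already notes. This kills the only place where the sign sequence $\eps(\gamma)$ enters the formula.

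With these identifications, the right-hand side of \eqref{Main formula} becomes $\sum_{\gamma\in\calF_n}p_\gamma(B)\,p_{\sigma\circledcirc\gamma}(\Sigma)$, which is exactly \eqref{Main formula (B)}. (The notation $\calF_n(t)$ is retained in the statement only for uniformity; it equals $\calF_n$ here.) So the key steps, in order, are: (i) specialize $s=1$ and trivialize all colorings; (ii) observe $p_{\gamma,t}=q_{\gamma,t}=p_\gamma$ for symmetric arguments and that both sides depend on $\gamma$ only through its cycle type; (iii) read off the identity from Theorem~\ref{T1}. Alternatively, one may start from Corollary~\ref{T1Ca} with $B=I_M$, which already removes the $B$-dependence on the right as $\prod_j M_j^{\#\sC_j(\gamma)}=M^{\#\sC(\gamma)}$, and then note that keeping general symmetric $B$ simply restores the factor $p_\gamma(B)=\prod_{c\in\sC(\gamma)}\tr(B^{|c|})$ in place of $M^{\#\sC(\gamma)}$, by the same computation as in the proof of Corollary~\ref{T1Ca}.

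There is no real obstacle here: the entire content of the corollary is already contained in Theorem~\ref{T1}, and the proof is a bookkeeping exercise. If anything, the one point that deserves a sentence of care is the claim that $q_{\gamma,t}(B)=p_\gamma(B)$ when $B$ is symmetric — this is where the symmetry hypothesis is used and is exactly the observation already recorded in the text just before the statement. I expect the author's proof to be one or two lines saying precisely this.
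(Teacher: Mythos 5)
Your proposal is correct and matches the paper's own (very terse) treatment: the paper gives no proof environment for Corollary~\ref{T1Cb}, just the parenthetical remark that in the one-matrix case $t\equiv 1$ and $q_{\gamma,t}=p_{\gamma,t}$, which is precisely your step (ii), and then reads the identity off Theorem~\ref{T1} exactly as in your step (iii). Your main route is the intended one; the ``alternative'' via Corollary~\ref{T1Ca} is slightly misleading since that corollary is stated only for block-identity $B_j$, so to reach general symmetric $B$ you must return to Theorem~\ref{T1} anyway.
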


To compute $p_{\gamma}(I_N)$
one only needs to count the number of cycles in $\gamma\in\calF_n$.
So when $\Sigma,B$ are both identity matrices,
 the expression depends only on the number of cycles of the appropriate 2-regular graphs.

\begin{corollary}[{\cite[Corollary 2.4]{Hanlon-Stanley-Stembridge-92}}]
  \label{HLL}
For $\la=(\la_1,\la_2,\dots)\vdash n$, let $p_\la(\mx)=\tr (\mx^{\la_1})\tr (\mx^{\la_2}) \dots$.
If $\mW\in\WW{I_N,I_M}$, and $\sigma_\la\in\calF_n^+$ is given by \eqref{sigma_la}, then
\begin{equation}
  \label{HSS}
  \E\left(p_\la(\mW)\right)=\sum_{\gamma\in \calF_n} M^{\# \sC(\delta\cup\gamma)} N^{\# \sC(\gamma \cup\sigma_\la)}.
\end{equation}
\end{corollary}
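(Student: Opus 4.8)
The plan is to read \eqref{HSS} off from Theorem~\ref{T1}, in the one-matrix specialization recorded in Corollary~\ref{T1Cb}. Apply \eqref{Main formula (B)} with $\Sigma=I_N$ and $B=I_M$ (which is symmetric), and with $\sigma=\sigma_\la$ given by \eqref{sigma_la}; since $\pi(\sigma_\la)$ has cycle type $\la$, the monomial $p_{\sigma_\la}$ is exactly $p_\la$, so
$$
\E\bigl(p_\la(\mW)\bigr)=\sum_{\gamma\in\calF_n}p_\gamma(I_M)\,p_{\sigma_\la\circledcirc\gamma}(I_N).
$$
It then remains to identify the two factors with the powers of $M$ and $N$ in \eqref{HSS}.

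For the first factor, observe that for any $\eta\in\calF_n$ one has $p_\eta(I_K)=\prod_{c\in\sC(\eta)}\tr(I_K^{|c|})=K^{\#\sC(\eta)}$, and that $\#\sC(\eta)=\#\sC(\delta\cup\eta)$: by the very construction of $\pi(\eta)$, each cycle of $\pi(\eta)$ is recorded from, and is in bijection with, a connected component of the $2$-regular graph $\delta\cup\eta$. Taking $\eta=\gamma$ gives the factor $M^{\#\sC(\delta\cup\gamma)}$, and taking $\eta=\sigma_\la\circledcirc\gamma$ gives $N^{\#\sC(\sigma_\la\circledcirc\gamma)}$, so everything reduces to one combinatorial identity.

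The only substantial point is therefore
$$
\#\sC(\sigma_\la\circledcirc\gamma)=\#\sC(\gamma\cup\sigma_\la),
$$
i.e. that the number of cycles of $\pi(\sigma_\la\circledcirc\gamma)$ equals the number of loops of the $2$-regular graph $\gamma\cup\sigma_\la$. This is a purely diagrammatic statement about the Brauer product, and I would prove it by surgery on the three-row picture defining $\sigma_\la\circledcirc\gamma$: draw $\gamma$ on rows $1$–$2$, $\sigma_\la$ on rows $2$–$3$, and the matching $\delta$ joining row $1$ to row $3$; then every vertex has degree $2$, the diagram is a disjoint union of loops, and smoothing away the degree-$2$ vertices of the middle row produces exactly $\delta\cup(\sigma_\la\circledcirc\gamma)$ (here one uses $\sigma_\la\in\calF_n^+$, so that no loop is contained entirely in the middle row and the Brauer product has weight $1$), whereas smoothing away the degree-$2$ vertices of an outer row produces a superposition of $\gamma$ and $\sigma_\la$. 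Since smoothing degree-$2$ vertices does not change the number of connected components, the two counts coincide. The part that needs care — and the main obstacle — is the orientation bookkeeping in the outer-row smoothing: one must track which row of each of $\gamma$ and $\sigma_\la$ plays the role of the ``top'' row, and it is precisely the explicit description \eqref{sigma_la} of $\sigma_\la$ together with the identification \eqref{pi on F+} of $\pi$ on $\calF_n^+$ that pins down the superposition $\gamma\cup\sigma_\la$ occurring in \eqref{HSS}.
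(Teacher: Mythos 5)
Your reduction to the identity $\#\sC(\sigma_\la\circledcirc\gamma)=\#\sC(\gamma\cup\sigma_\la)$ is exactly where the argument breaks, because that identity is false. When you smooth away an outer row of the three-row diagram you must translate between the labelling of the middle row as $\gamma$'s bottom ($-1,\dots,-n$) and as $\sigma_\la$'s top ($1,\dots,n$), and that sign flip is not harmless: what the outer-row smoothing actually produces is $\gamma\cup\hat{\sigma}_\la$, where $\hat{\sigma}_\la(i):=-\sigma_\la(-i)$ is $\sigma_\la$ with its two rows swapped, and this graph can genuinely have a different number of components than $\gamma\cup\sigma_\la$. For a concrete counterexample take $n=3$, $\sigma_\la=\sigma_{(3)}=\{\{1,-2\},\{2,-3\},\{3,-1\}\}$, $\gamma=\{\{1,2\},\{3,-1\},\{-2,-3\}\}$: then $\#\sC(\gamma\cup\sigma_\la)=2$ (the repeated edge $\{3,-1\}$ is a component on its own) while $\sigma_\la\circledcirc\gamma=\{\{1,2\},\{3,-2\},\{-1,-3\}\}$ and $\#\sC(\sigma_\la\circledcirc\gamma)=\#\sC(\gamma\cup\hat{\sigma}_\la)=1$. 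So the ``orientation bookkeeping'' you flag as the main obstacle does not in fact pin down $\gamma\cup\sigma_\la$; it pins down $\gamma\cup\hat{\sigma}_\la$.

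The paper sidesteps this by applying \eqref{Main formula (B)} not to $\sigma_\la$ but to $\hat{\sigma}_\la$: since $\hat{\sigma}_\la\in\calF_n^+$ and $\pi(\hat{\sigma}_\la)=\pi(\sigma_\la)^{-1}$ has the same cycle type $\la$, one has $p_{\hat{\sigma}_\la}=p_\la$, and the same outer-row smoothing applied to $\hat{\sigma}_\la\circledcirc\gamma$ now yields $\gamma\cup\hat{\hat{\sigma}}_\la=\gamma\cup\sigma_\la$ as required. Alternatively you could keep $\sigma_\la$ and re-index the sum by the global sign flip $\gamma\mapsto-\gamma$ on $\{\pm 1,\dots,\pm n\}$, which fixes $\delta$ (so preserves $\#\sC(\gamma)$) and exchanges $\sigma_\la\leftrightarrow\hat{\sigma}_\la$; either repair finishes the proof, but as written your proposal asserts a term-by-term equality that does not hold.
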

\begin{proof}
From the definition of $\pi(\gamma)$ it is clear that
the number of cycles of $\delta\cup \gamma$ matches the number of cycles of
$\pi(\gamma)$.

The cycles of $\pi(\sigma\circledcirc\gamma)$ arise from adding the vertical
edges to connect the first and last row of the three-row diagram. In view of our choice of $\sigma_\la$,
this gives the same cycles as adding to the graph of $\gamma$  the edges from
the graph of $\hat{\sigma}$ defined by $\hat{\sigma}_\la(i)=-\sigma_\la(-i)$, which is just the graph of $\sigma_\la$
with swapped rows. Of course $\pi(\hat{\sigma}_\la)$ is the inverse of $\pi(\sigma_\la)$
so it has the same cycle structure. Thus $p_\la=p_{\hat{\sigma}_\la}$. Applying
 \eqref{Main formula} to $p_{\hat{\sigma}_\la}$, we end with \eqref{HSS}.
\end{proof}


\subsection{Noncommutative version}\label{Sect NC}
To extend Theorem \ref{T1} to noncommutative setting we need additional notation.

 For $\gamma\in\calF_n$,
let $\CR(\gamma)$ denote the number of crossings of $\gamma$ treated as the pair partition of the ordered set
$(1,-1,2,-2,\dots,n,-n)$. (The second column of Table \ref{Table1} lists some examples.)

In order to extend the definition of
 polynomials $p_{\sigma,t}$ to matrix arguments with entries that do not commute we need to specify
 the order in which the traces appear.
To do so, we follow the standard conventions:
\begin{itemize}
  \item we write the cycles of permutation $\pi(\sigma)$ starting with the smallest element. Thus
if $c_k=(j_{k1},j_{k2},\dots,j_{k\ell_k})$ is the $k$-th cycle, then we assume that $j_{k1}\leq j$ for all $j\in c_k$.
\item we order the cycles $c_1,c_2,\dots$ by the order of their first elements. Thus
$1=j_{11}<j_{21}<\dots$.
\end{itemize}
Under these conventions, we define
$$
p_{\sigma,t}(\mW_1,\dots,\mW_s)=\tr(\mW_{t(1)}\mW_{t(j_{12})}\mW_{t(j_{1\ell_1})})
\tr(\mW_{t(j_{21})}\mW_{t(j_{22})}\dots \mW_{t(j_{2\ell_2})})\dots
$$
We will use shorthand notation
$$p_{\sigma,t}(\mW_1,\dots,\mW_s)=\prod_{c\in\sC(\sigma)}\tr\left(\prod_{j\in c}\mW_{t(j)}\right)$$
which should be interpreted in the above sense.

Our next result extends Theorem \ref{T1} to noncommutative setting. We remark  that due to noncommutativity, we allow
only special choices of $\sigma$, and that polynomials $q_{\gamma,t}$ are not needed,
 as matrices $B_1,\dots,B_s$ are symmetric.
\begin{theorem}
  \label{T2}
  Let $B_1,\dots,B_s\in\calM_{M\times M}(\sR)$ and
  $\Sigma_1,\dots,\Sigma_s\in\calM_{N\times N}(\sR)$ be  positive definite matrices.
  Let $\sigma\in\calF_n^+$ be given by \eqref{sigma_la}
  and let $t:\{1,\dots,n\}\to\{1,\dots,s\}$.
  If $\mW_1\in\WWq{\Sigma_1,B_1},\mW_2\in\WWq{\Sigma_2,B_2},\dots,\mW_s\in\WWq{\Sigma_s,B_s}$
  are $q$-orthogonal, $-1\leq q\leq 1$,
  then
\begin{equation}
  \label{Main q-formula}
\EE\left(p_{\sigma,t}(\mW_1,\dots,\mW_s)\right)=\sum_{\gamma\in \calF_{n}(t)}
q^{\CR(\gamma)}p_{\gamma,t}(B_1,\dots,B_s)p_{\sigma\circledcirc\gamma,t(\sigma,\gamma)}(\Sigma_1,\Sigma_2,\dots,\Sigma_s).
\end{equation}
\end{theorem}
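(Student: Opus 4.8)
The plan is to mirror the proof of Theorem \ref{T1} (to be given in Section \ref{Sect Proof 1}) but to replace the classical Wick formula with the $q$-Wick formula of Definition \ref{q-Prob Space}. First I would expand $p_{\sigma,t}(\mW_1,\dots,\mW_s)$ using \eqref{q-Wigner}: writing each $\mW_r=\mX_r^*\mX_r$ and applying $\EE\circ\tr_N$, we obtain a sum over index paths of expectations $\EE(\XX^{(r_1)}_{i_1,j_1}\XX^{(r_2)}_{i_2,j_2}\cdots)$ of products of $2n$ noncommutative $q$-Gaussian entries, where the coloring $t$ records which matrix $\mX_r$ each factor comes from. The crucial point is that, because the entries are $q$-Gaussian and $q$-orthogonal, the $q$-Wick formula turns this into a sum over pair partitions $\calV$ of the $2n$ positions, weighted by $q^{\CR(\calV)}$, of products of covariances \eqref{q-factorization}: a pair that matches two entries of the same color $r$ contributes $[B_r]_{i,k}[\Sigma_r]_{j,m}$, and pairs joining different colors contribute $0$ by $q$-orthogonality. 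This color constraint is exactly what restricts the sum to $\calF_n(t)$ after the positions are identified with $\{\pm1,\dots,\pm n\}$ in the standard way.

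The second step is purely combinatorial and is shared with the proof of Theorem \ref{T1}: one identifies each admissible pair partition with $\gamma\in\calF_n(t)$, sums the free $B$-indices to produce the factor $p_{\gamma,t}(B_1,\dots,B_s)$ coming from cycles of $\pi(\gamma)$ (here we use that the $B_r$ are symmetric, so $q_{\gamma,t}=p_{\gamma,t}$ and the signs $\eps(\gamma)$ drop out), and sums the free $\Sigma$-indices to produce $p_{\sigma\circledcirc\gamma,t(\sigma,\gamma)}(\Sigma_1,\dots,\Sigma_s)$, where the Brauer product $\sigma\circledcirc\gamma$ arises precisely because $\sigma\in\calF_n^+$ encodes how the $\mX_r^*\mX_r$ factors are chained together inside the trace, and the modified coloring $t(\sigma,\gamma)$ of Definition \ref{Def t-modified} tracks which $\Sigma$ appears on each resulting cycle. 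Since the restriction $\sigma=\sigma_\la$ of \eqref{sigma_la} is imposed, the ordering conventions for writing the cycles of $\pi(\sigma\circledcirc\gamma)$ are unambiguous, so the noncommutative monomials $p_{\sigma\circledcirc\gamma,t(\sigma,\gamma)}(\Sigma_1,\dots,\Sigma_s)$ are well defined as in Section \ref{Sect NC}.

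The one genuinely new ingredient, and the main obstacle, is bookkeeping the crossing weight: I must show that the exponent $\CR(\calV)$ produced by the $q$-Wick formula, where $\calV$ is the pair partition of the $2n$ \emph{linearly ordered} entry-positions as they appear in the expanded trace, equals $\CR(\gamma)$, the number of crossings of $\gamma$ regarded as a pair partition of the ordered set $(1,-1,2,-2,\dots,n,-n)$ (Definition preceding Theorem \ref{T2}). The point is that when $\sigma=\sigma_\la$ and $\mW_r=\mX_r^*\mX_r$, the $2n$ factors $\XX^*,\XX,\XX^*,\XX,\dots$ occur in the trace in exactly the cyclic order $(1,-1,2,-2,\dots,n,-n)$ after our labeling convention, so the combinatorial crossing count of the Wick pairing coincides with $\CR(\gamma)$; one must check this is insensitive to the cyclic (rather than linear) nature of the trace, which follows because $q^{\CR}$ is the same for any rotation of the ground set when the pairing is noncrossing-adapted, or more carefully by invoking traciality of $\EE$ together with the structure of $\sigma_\la$. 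Once this identification is in place, the sum over $\calV$ collapses to the sum over $\gamma\in\calF_n(t)$ with the stated weight $q^{\CR(\gamma)}$, and specializing $q=1$ recovers \eqref{Main formula} (with $q_{\gamma,t}=p_{\gamma,t}$), providing a consistency check.
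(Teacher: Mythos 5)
Your proposal is correct and follows essentially the same route as the paper: apply the $q$-Wick formula to the linearized expansion of $\EE(p_{\sigma,t}(\mW_1,\dots,\mW_s))$, use $q$-orthogonality to restrict the sum to color-preserving $\gamma\in\calF_n(t)$, then factor the surviving products of covariances \eqref{q-factorization} into an $M$-index piece and an $N$-index piece identified with $p_{\gamma,t}(B_1,\dots,B_s)$ (using symmetry of the $B_r$, so $q_{\gamma,t}=p_{\gamma,t}$) and $p_{\sigma\circledcirc\gamma,t(\sigma,\gamma)}(\Sigma_1,\dots,\Sigma_s)$ respectively, reusing the index-summation lemmas from the proof of Theorem \ref{T1}. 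The only small wobble is in your third paragraph: there is no cyclic/linear subtlety to worry about, since the hypothesis $\sigma=\sigma_\la$ combined with the convention of writing each cycle starting at its smallest element forces the $2n$ entries to appear in precisely the fixed linear order $1,-1,2,-2,\dots,n,-n$, so the crossing exponent produced by the $q$-Wick formula coincides with $\CR(\gamma)$ by definition, without any appeal to traciality or rotation-invariance.
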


The proof is given in Section \ref{Sect Proof 1}. Here we apply \eqref{Main q-formula}
 to deduce a non-asymptotic representation of the Marchenko-Pastur law.
\subsubsection{A matrix representation of the compound Marchenko-Pastur law}
There is a number of results that relate Marchenko-Pastur law to the semicircle law.
The well known result from \cite{Nica-Speicher96} says that if $\XX$ has the semicircle law then
$\XX^2$  has the Marchenko-Pastur law with parameter $\la=1$. (In fact, Ref. \cite{Nica-Speicher96} shows that
for every value of $\la>0$, one can find a projection $P$ such that $\XX P\XX$ has the Marchenko-Pastur law.)
A related representation  arises by
interpreting an $nM \times n N$ matrix of independent random variables as an $M\times N$ block matrix;
 the limit as $n\to\infty$ represents the Marchenko-Pastur law with parameter $M/N$  as $\mX^*\mX$
 with a matrix $\mX$ of
 free circular entries. Yet another representation that uses free circular elements appears in
\cite[Proposition 5.2]{Thorbjornsen-00}.

Our representation
of the Marchenko-Pastur law differs slightly, as our matrix $\XX$ is not self-adjoint and has semicircular entries. Of course, the result is
 not surprising; but an argument  is needed as occasionally
 squares of symmetric and nonsymmetric matrices behave differently, see \cite[Section 4]{mingo-2007}.


We first recall some properties of non-crossing partitions and recall the definition of a compound Marchenko-Pastur
law.

A partition $\calV$ of an ordered set is non-crossing if $\CR(\calV)=0$.
By $NC[1,\dots,n]$ we denote the set of all non-crossing partitions of $(1,\dots,n)$ .

The free cumulants $\{r_k: k\geq 1\}$ of a probability measure $\mu$ with all moments
are defined implicitly by requiring that for all $n\in\NN$,
$$
\int_\RR x^n \mu(dx)=\sum_{\calV\in NC[1,\dots,n]}\prod_{V\in\calV}r_{\#V}.
$$
For more details, see \cite[Section 2.5]{Hiai-Petz}; for the operator-valued case, see \cite{Speicher-98}.

The Marchenko-Pastur law with parameter $\la>0$ is the law
$$
\pi_\la(dx)=(1-\la)^+\delta_0+\frac{\sqrt{4\la-(x-1-\la)^2}}{2\pi x}1_{(1-\sqrt{\la})^2\leq x\leq (1+\sqrt{\la})^2}  dx,
$$
where $\delta_0$ denotes the point mass at $0$.
The free cumulants of the Marchenko-Pastur law are
$r_k=\la$ for all $k\geq 1$.

If $\nu$ is a probability measure on the Borel sets of $\RR$ with finite moments, then the compound Marchenko-Pastur law $\pi_{\nu,\la}(dx)$
is defined as the law with  free cumulants
$$r_k=\la \int_\RR x^k \mu(d x).$$
Thus the $n$-th moment of the compound Marchenko-Pastur law is
\begin{equation}\label{compound MP}
  \int_\RR x^n \pi_{\nu,\la}(dx)=\sum_{\calV\in NC[1\dots n]} \la^{\# \calV}\prod_{V\in \calV} \int_\RR x^{\#V} \mu(dx),
\end{equation}
compare \cite[(3.3.10)]{Hiai-Petz}. In particular, the Marchenko-Pastur law corresponds to $\nu=\delta_1$.

\begin{proposition}\label{T3}  Let $\nu$ be the distribution of the eigenvalues of a real positive-definite  matrix $B$
with respect to the normalized $\tr_M:\calM_{M\times M}\to \RR$. (That is, $\nu$ assigns mass $1/M$ to each of the eigenvalues of $B$).
Let $\la=M/N$.
If $\mW\in\WWq{I_N,B}$ and $q=0$ then $\frac1N\mW$ has compound
Marchenko-Pastur law
 $\pi_{\nu,\la}$ with respect to state $\tau\circ\tr_N$. In particular, if $\mW\in\WWq{I_N,I_M}$ and $q=0$ then $\frac1N\mW$ has Marchenko-Pastur law with parameter
$\la$.
\end{proposition}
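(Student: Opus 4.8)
The plan is to compute the $n$-th moment $\EE\circ\tr_N(\tfrac1N\mW)^n$ using the $q=0$ specialization of Theorem \ref{T2} and to check that the resulting combinatorial sum equals the right-hand side of \eqref{compound MP} with $\mu=\nu$. First I would fix $\la'=(n)\vdash n$, so that $\sigma:=\sigma_n\in\calF_n^+$ satisfies $\pi(\sigma_n)=(1,2,\dots,n)$ and $p_{\sigma_n}(\mx)=\tr(\mx^n)$; thus $p_{\sigma_n}(\mW)=\tr(\mW^n)$, and since there is only one matrix variable we drop the coloring $t$ and write $p_\gamma$ for both $p_{\gamma,t}$ and $q_{\gamma,t}$ (as the paper notes, $B$ is symmetric so the distinction disappears). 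Applying \eqref{Main q-formula} with $\Sigma=I_N$, $q=0$, and dividing by $N^n$, only the partitions $\gamma\in\calF_n$ with $\CR(\gamma)=0$ survive, i.e. $\gamma$ runs over non-crossing pair partitions in $\calF_n$; for each such $\gamma$ the contribution is $p_\gamma(B)\cdot N^{-n}p_{\sigma_n\circledcirc\gamma}(I_N)=p_\gamma(B)\cdot N^{\#\sC(\sigma_n\circledcirc\gamma)-n}$.

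Next I would identify both combinatorial factors. Since $\pi:\calF_n^+\to\Sn_n$ is a bijection and the non-crossing pair partitions $\gamma\in\calF_n$ (those with $\eps(\gamma)=(+,\dots,+)$, hence in $\calF_n^+$, together with $\CR=0$) correspond exactly to the non-crossing permutations below the full cycle, the standard Biane/Kreweras correspondence should give: the cycles of $\pi(\gamma)$ are the blocks of a non-crossing partition $\calV\in NC[1,\dots,n]$, and $p_\gamma(B)=\prod_{c\in\sC(\gamma)}\tr_M(B^{\#c})=\prod_{V\in\calV}\int_\RR x^{\#V}\nu(dx)$. For the $\Sigma$-factor, using $\pi(\sigma_n\circledcirc\gamma)=\pi(\sigma_n)\circ\pi(\gamma)=(1,\dots,n)\circ\pi(\gamma)^{-1}$ (up to the row-swap bookkeeping already handled in the proof of Corollary \ref{HLL}), the number of cycles $\#\sC(\sigma_n\circledcirc\gamma)$ equals $n+1-\#\sC(\gamma)-2\CR(\gamma)$ for general $\gamma$; when $\CR(\gamma)=0$ this is the genus-zero relation $\#\sC(\sigma_n\circledcirc\gamma)=n+1-\#\calV$, so the power of $N$ is $N^{1-\#\calV}$. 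Hence the $n$-th moment of $\tfrac1N\mW$ equals $\sum_{\calV\in NC[1,\dots,n]}N^{1-\#\calV}\prod_{V\in\calV}\int x^{\#V}\nu(dx)$, which is not quite \eqref{compound MP}.

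To fix the power of $N$ I would instead expand $\EE\circ\tr_N((\tfrac1N\mW)^n)$ keeping $N$ as a free parameter but noting $\la=M/N$: each block $V$ actually carries $\tr_M(B^{\#V})=\tfrac1M\sum(\text{eigenvalues})^{\#V}$, so $p_\gamma(B)=M^{-\#\calV}\prod_V(\text{trace of }B^{\#V}\text{ unnormalized})$, and I should track the normalized-vs-unnormalized traces carefully: with $\tr_M$ and $\tr_N$ both normalized, the $B$-factor contributes $\prod_V\int x^{\#V}\nu(dx)$ and the $\Sigma=I_N$-factor contributes $N^{\#\sC(\sigma_n\circledcirc\gamma)}/N=N^{\#\sC(\sigma_n\circledcirc\gamma)-1}$ after the outer $\tr_N$; combined with the $N^{-n}$ from rescaling $\mW$ this is $N^{\#\sC(\sigma_n\circledcirc\gamma)-1-n}$. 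Wait — the cleanest route is: by \eqref{Main q-formula} applied to $\tr(\mW^n)=p_{\sigma_n}(\mW)$ with the outer $\tr_N$ built into $p$, one has $\EE\circ\tr_N(\mW^n)=\tfrac1N\sum_{\gamma:\CR(\gamma)=0}p_\gamma(B)\,N^{\#\sC(\sigma_n\circledcirc\gamma)}$ where now $p_\gamma(B)=\prod_V M\cdot\tr_M(B^{\#V})$ uses unnormalized traces of $B$, giving a factor $M^{\#\calV}\prod_V\int x^{\#V}\nu(dx)$; then $\tfrac1N M^{\#\calV}N^{\#\sC(\sigma_n\circledcirc\gamma)}=\tfrac1N M^{\#\calV}N^{n+1-\#\calV}=M^{\#\calV}N^{n-\#\calV}=N^n(M/N)^{\#\calV}=N^n\la^{\#\calV}$, so after dividing by $N^n$ for the rescaling we get exactly $\sum_{\calV\in NC[1,\dots,n]}\la^{\#\calV}\prod_{V\in\calV}\int x^{\#V}\nu(dx)$, matching \eqref{compound MP}. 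The main obstacle is precisely this bookkeeping of normalized versus unnormalized traces and the genus-zero cycle-count identity $\#\sC(\sigma_n\circledcirc\gamma)=n+1-\#\calV$ for $\CR(\gamma)=0$; once that is pinned down (it follows from the proof technique of Corollary \ref{HLL} together with the fact that for non-crossing $\gamma$ the Brauer product realizes the Kreweras complement), the identification with \eqref{compound MP} is immediate, and the special case $B=I_M$, $\nu=\delta_1$ gives the Marchenko-Pastur law with parameter $\la$.
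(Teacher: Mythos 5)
Your proposal follows essentially the same route as the paper's proof: apply the $q=0$ case of \eqref{Main q-formula} to $\tr(\mW^n)=p_{\sigma_n}(\mW)$, observe that only the non-crossing $\gamma$ contribute, identify the cycles of $\pi(\gamma)$ with a non-crossing partition of $\{1,\dots,n\}$ via Lemma~\ref{L-NC}, and convert the power of $N$ using Biane's geodesic condition \eqref{Bianes geodesic}---including the normalized-versus-unnormalized trace bookkeeping, which you initially get wrong but correctly catch and repair in your second paragraph. Two small slips worth noting, neither fatal since you only invoke the $\CR(\gamma)=0$ case: the claimed general identity $\#\sC(\sigma_n\circledcirc\gamma)=n+1-\#\sC(\gamma)-2\CR(\gamma)$ is not established by the paper (Proposition~\ref{L:machi} only gives an integer $h\geq 0$, with no claim that $h=2\CR(\gamma)$), and by \eqref{pi-composition} one has $\pi(\sigma_n\circledcirc\gamma)=\pi(\sigma_n)\circ\pi(\gamma)$ with no inverse; the inverse enters only later, when substituting $\beta=\alpha^{-1}$ into Biane's condition after Lemma~\ref{L-NC} has shown that $\alpha^{-1}$ is a non-crossing permutation.
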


A pair partition
$\gamma\in\calF_n$ is non-crossing,
if $\CR(\gamma)=0$. Notice that all non-crossing $\gamma$ are in $\calF_n^+$.
  Indeed, our ordering of $\{1,-1,\dots,n,-n\}$ implies that
 any two numbers of the same sign have an odd number of elements between them. Thus any pair in $\gamma$ of the
 same sign  must generate a crossing.

We will need to know that for a non-crossing $\gamma\in\calF_n$, permutation $\pi(\gamma)$ is the inverse of a
 non-crossing permutation.
Biane \cite{Biane-97} defines non-crossing permutations $\beta\in\Sn_n$ by the following two requirements:
\begin{itemize}
  \item[(a)]  Each cycle of $\beta$
 can be written in increasing
order
  \item[(b)] The partition of $\{1,\dots,n\}$ into the cycles of $\beta$ is non-crossing.
\end{itemize} Biane characterizes non-crossing permutations by
the geodesic condition
\begin{equation}
  \label{Bianes geodesic}
  \#\sC(\beta)+\#\sC(\rho\circ \beta^{-1})=n+1.
\end{equation}
(See \cite[Remark 2.9]{Mingo-Nica-04}, \cite[Section 6]{Kusalik-Mingo-Speicher-05}, and
\cite[Exercise 6.19.hh]{Stanley-99v2}. Note that this is $g=0$ in \eqref{Jacques} below.)

It is well known that the non-crossing pair partitions  of $2n$ elements are in a one-to-one correspondence with
the non-crossing partitions of $n$ elements, see \cite[Exercise 6.19]{Stanley-99v2};
see also \cite[page 74]{Hiai-Petz}. It turns out that the restriction of $\pi:\calF_n\to\Sn_n$ to non-crossing pair partitions
defines an explicit version of this bijection.

\begin{lemma}\label{L-NC}
 Let $\tilde{\pi}$ be the mapping from pair partitions of $\{\pm 1,\dots,\pm n\}$
 to partitions  of $\{1,\dots,n\}$ which assigns to $\gamma\in\calF_n$ the partition into the cycles of
  permutation $\pi(\gamma)$. Then $\tilde{\pi}$ is a bijection from the non-crossing pair partitions
  with respect to ordering $(1,-1,\dots,n,-n)$ to the
  non-crossing partitions of $(1,\dots,n)$ in natural order.
  Moreover,
  for non-crossing  $\gamma\in\calF_n$, all cycles of $\pi(\gamma)$
  can be represented as decreasing sequences.
\end{lemma}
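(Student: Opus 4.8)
The plan is to reduce everything to $\calF_n^+$, prove the ``moreover'' assertion by a direct combinatorial argument, deduce injectivity of $\tilde\pi$ from it, prove that the image lies in $NC[1,\dots,n]$ by induction on $n$, and then upgrade injectivity to a bijection by a Catalan count.

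First I would record the reduction already granted just above the lemma: a non-crossing $\gamma\in\calF_n$ lies in $\calF_n^+$, so by \eqref{pi on F+} one has $\pi(\gamma)(j)=-\gamma(j)=:\alpha(j)$ and $\gamma=\{\{j,-\alpha(j)\}:1\le j\le n\}$; in particular $\gamma$ is reconstructible from the permutation $\alpha$. For the ``moreover'' part I would prove the equivalent statement that if $\alpha(j)>j$ then $j$ is the least element of its $\alpha$-cycle. Put the $2n$ points of the ordering on a line with $j$ at position $2j-1$ and $-j$ at position $2j$, so the edge $\{j,-\alpha(j)\}$ of $\gamma$ occupies positions $\{2j-1,2\alpha(j)\}$. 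If $\alpha(j_0)=j_1>j_0$, follow the cycle $j_0\to j_1\to j_2\to\cdots$; the edge $\{j_0,-j_1\}$ spans $[2j_0-1,2j_1]$, and for each $t\ge1$ (until the cycle returns to $j_0$) the endpoint $2j_t-1$ of the next edge $\{j_t,-j_{t+1}\}$ lies strictly inside $[2j_0-1,2j_1]$, so non-crossingness forces the partner $2j_{t+1}$ into the same interval; since cycle members are distinct, this yields $j_0<j_t\le j_1$ for every such $t$, hence $j_0$ is the cycle minimum. Consequently each cycle, read from its largest element, strictly decreases down to its smallest before closing up, which is exactly the ``moreover'' claim; it also shows that $\alpha$, and therefore $\gamma$, is determined by the set partition $\tilde\pi(\gamma)$ via the decreasing-cycle convention, so $\tilde\pi$ is injective on non-crossing pair partitions.

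Next I would prove $\tilde\pi(\gamma)\in NC[1,\dots,n]$ by induction on $n$ (the case $n\le1$ being trivial). A non-crossing matching of the $2n$ ordered points always has a ``visible'' edge joining two consecutive points, and, since $\gamma\in\calF_n^+$, such an edge is either $\{k,-k\}$ (positions $2k-1,2k$) or $\{-k,k+1\}$ (positions $2k,2k+1$). In the first case $\alpha(k)=k$; deleting these two positions gives a non-crossing $\gamma'\in\calF_{n-1}^+$, and $\tilde\pi(\gamma)$ arises from the (inductively non-crossing) $\tilde\pi(\gamma')$ by an order-preserving relabelling of $\{1,\dots,n-1\}$ onto $\{1,\dots,n\}\setminus\{k\}$ together with adjoining the singleton $\{k\}$ -- both operations preserve non-crossingness. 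In the second case $\alpha(k+1)=k$; deleting the two positions amounts to contracting the arrow $k+1\to k$ in the functional graph of $\alpha$ and again leaves a non-crossing $\gamma'\in\calF_{n-1}^+$, so that $\tilde\pi(\gamma)$ is obtained from $\tilde\pi(\gamma')$ by an order-preserving relabelling of $\{1,\dots,n-1\}$ onto $\{1,\dots,n\}\setminus\{k+1\}$ followed by inserting the new point $k+1$ into the block containing $k$; a short case analysis -- any crossing this insertion could create would survive the inverse relabelling, contradicting the inductive hypothesis -- shows this also preserves non-crossingness.

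Finally, $\tilde\pi$ is an injection from the set of non-crossing pair partitions of a $2n$-element ordered set into $NC[1,\dots,n]$, and both sets have the same finite cardinality $C_n$ (as recalled above, see \cite[Exercise 6.19]{Stanley-99v2}), so $\tilde\pi$ is a bijection; alternatively, surjectivity follows by running the induction of the previous paragraph in reverse, peeling a ``visible'' part of the target partition. I expect the only genuinely delicate point to be the contraction case of the induction: pinning down that deleting a $\{-k,k+1\}$-edge acts on $\pi(\gamma)$ as the edge contraction $k+1\to k$ (not as a mere relabelling), and then verifying that the induced ``insert a new element immediately after a prescribed one, inside its block'' operation on set partitions preserves non-crossingness. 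Everything else is bookkeeping.
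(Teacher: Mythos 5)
Your proof is correct, but it takes a genuinely different route from the paper's. The paper (following a referee's suggestion) argues algebraically: after noting that non-crossing $\gamma$ lie in $\calF_n^+$ and setting $\alpha=\pi(\gamma)$, it verifies the cycle-count identity $\#\sC\bigl(\gamma\circ(1,-1,\dots,n,-n)\bigr)=\#\sC(\alpha^{-1})+\#\sC\bigl(\alpha\circ(1,\dots,n)\bigr)$ and then invokes Biane's geodesic characterization \eqref{Bianes geodesic} twice --- once in $\Sn_{2n}$ to translate ``$\gamma$ non-crossing'' into a cycle-count equation, once in $\Sn_n$ to translate that equation into ``$\alpha^{-1}$ non-crossing'' --- so the biconditional, and hence bijectivity, drops out in a few lines with no induction and no counting. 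You instead run a direct combinatorial argument: prove the ``moreover'' assertion first via a nesting argument on the line (if $\alpha(j_0)>j_0$ then every subsequent endpoint is trapped in the interval $[2j_0-1,\,2\alpha(j_0)]$, so $j_0$ is the cycle minimum; the contrapositive then gives the strictly decreasing cycle representation), deduce injectivity of $\tilde\pi$ from the uniqueness of the decreasing-cycle lift, prove $\tilde\pi(\gamma)\in NC[1,\dots,n]$ by induction on $n$ by peeling a visible edge (either a $\{k,-k\}$ fixed point or a $\{-k,k+1\}$ contraction), and close with a Catalan count. Both are sound; what the paper's route buys is brevity and coherence with the genus machinery (Jacques/Machi, \eqref{Jacques}) already in use in Section~4, while your route is self-contained and gives an explicit reconstruction of $\gamma$ from the non-crossing partition. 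The one place in your argument that deserves to be written out in full is exactly the one you flag: in the contraction case, one must check both that deleting the $\{-k,k+1\}$ edge acts on $\pi(\gamma)$ as the contraction of the arrow $k+1\to k$ (a short computation with the relabelling maps) and that inserting a fresh maximal neighbour of $k$ into $k$'s block preserves non-crossingness of a set partition (a finite case analysis, as you note). With those spelled out the proof is complete.
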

(We note that our "standard" representation of a cycle begins with the smallest element,
which therefore needs to be moved to the end of the cycle to get a decreasing sequence representation.)

The proof below is due to the referee.
\begin{proof}
Non-crossing pair partitions are in $\calF_n^+$,
and we already noted in Remark \ref{Remark: bijection} that $\pi:\calF_n^+\to \Sn_n$ is one-to-one.

Let $\gamma\in\calF_n^+$ and let $\alpha\in \Sn$ be such that $\pi(\gamma)=\alpha$. Notice that permutation
$\gamma\circ (1,-1,2,-2,\dots,n,-n)$ works by sending $i\mapsto \alpha^{-1}(i)$ and
$-i\mapsto -(\alpha\circ(1,2,\dots,n))$, for every $1\leq i\leq n$. Hence
\begin{equation}
  \label{referee}
  \#\sC\left(\gamma\circ (1,-1,2,-2,\dots,n,-n)\right)=
  \#\sC\left(\alpha^{-1}\right)+\#\sC\left(\alpha\circ (1,2,\dots,n)\right).
\end{equation}

By using this fact and Biane's geodesic condition \eqref{Bianes geodesic}, we see that
$\gamma$ is non-crossing with respect to $(1,-2,2,-2,\dots,n,-n)$ iff
$$
\#\sC(\gamma)+\#\sC\left(\gamma\circ (1,-1,2,-2,\dots,n,-n)\right)=2n+1.
$$
Since $\#\sC(\gamma)=n$, from \eqref{referee} we see that this is equivalent to
$$
\#\sC\left(\alpha^{-1}\right)+\#\sC\left(\alpha\circ (1,2,\dots,n)\right)=n+1,
$$
which, again by \eqref{Bianes geodesic}, is equivalent to the statement that $\alpha^{-1}$ is non-crossing.
\end{proof}

\begin{proofof}{Proof of Proposition \ref{T3}}
  When $q=0$, \eqref{Main q-formula} gives
  $$
  N^{-n}\EE\circ\tr_N(\mW^n)=\sum_{\gamma} N^{\#\sC(\sigma\circledcirc\gamma)-n-1}\prod_{c\in\sC(\gamma)}\tr(B^{\#c}),
  $$
  and the sum is taken over the non-crossing pair partitions $\gamma$.
Non-crossing $\gamma$ are in $\calF_n^+$, so we can use \eqref{pi-composition}. With $\rho=\pi(\sigma)=(1,\dots,n)$, we  get
 $$
  N^{-n}\EE\circ\tr_N(\mW^n)=\sum_{\gamma\in\calF_n \mbox{ non-crossing}}
  N^{\#\sC(\rho\circ\pi(\gamma))-n-1}\prod_{c\in\sC(\gamma)}\tr(B^{\#c}) .
  $$

By Lemma \ref{L-NC}, we can take the sum over the permutations $\alpha$ obtained as images of non-crossing $\gamma$.
 \begin{multline*}
    N^{-n}\EE\circ\tr_N(\mW^n)=\sum_{\alpha} N^{\#\sC(\rho\circ\alpha)-n-1}\prod_{c\in\sC(\alpha)}\tr(B^{\#c})
  =\sum_{\alpha} \la^{\#\sC(\alpha)} N^{\#\sC(\alpha)+\#\sC(\rho\circ\alpha)-n-1}\prod_{c\in\sC(\alpha)}\tr_M(B^{\#c}) ,
 \end{multline*}
 If the cycles of $\alpha\in\Sn_n$ form a non-crossing partition and are in decreasing order, then the cycles
of $\beta:=\alpha^{-1}$ are in increasing order and form a non-crossing partition, too.
Since $\alpha$ and $\alpha^{-1}$ have the same
number of cycles,
\eqref{Bianes geodesic} can be written as
\begin{equation}
  \label{Bianes formula}
  \#\sC(\rho\circ\alpha)+\#\sC(\alpha)=n+1.
\end{equation}
Thus,
 \begin{multline*}
    N^{-n}\EE\circ\tr_N(\mW^n)=\sum_{\alpha\in\Sn_n, \alpha^{-1} \mbox{ non-crossing}}
  \la^{\#\sC(\alpha)} \prod_{c\in\sC(\alpha)}\int  x^{\#c}\nu(dx)
  =\sum_{\calV \in NC[1,\dots,n]}  \la^{\#\calV}  \prod_{V\in\calV}\int  x^{\#V}\nu(dx),
 \end{multline*}
 which matches the moments \eqref{compound MP} of the compound
  Marchenko-Pastur law.
\end{proofof}

\section{Proofs of Theorems  \ref{T1} and \ref{T2}}\label{Sect Proof 1}
Following \cite{Mingo-Nica-04}, see \cite[Lemma 1]{Bryc-06}, we write polynomials
$p_{\gamma,t}$ and $q_{\gamma,t}$  as follows.
\begin{lemma}
  \label{L MingoNica}
Let $\gamma\in\calF_n$.  With $\alpha=\pi(\gamma)$, $(\eps_1,\dots\eps_n)=\eps(\gamma)$,
\begin{equation}
  \label{MingoNica}
  p_{\gamma,t}(\mx_1,\dots,\mx_s)=
\sum_{I:\{1,\dots,n\}\to\{1,\dots,N\}}
\prod_{i=1}^n [\mx_{t(i)}]_{I(i),I(\alpha(i))}\;,
\end{equation}
\begin{equation}
  \label{MingoNica2}
  q_{\gamma,t}(\mx_1,\dots,\mx_s)=
\sum_{I:\{1,\dots,n\}\to\{1,\dots,N\}}
\prod_{i=1}^n [\mx_{t(i)}^{-\eps_i}]_{I(i),I(\alpha(i))}.
\end{equation}
\end{lemma}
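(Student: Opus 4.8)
The plan is to obtain both identities directly from the definitions \eqref{r} and \eqref{qr} by expanding every trace into a sum over index sequences, so that essentially nothing is left to prove beyond careful bookkeeping. Write $\alpha=\pi(\gamma)$. Since $\alpha$ is a permutation of $\{1,\dots,n\}$, its cycles $\sC(\gamma)$ partition $\{1,\dots,n\}$ and each cycle is $\alpha$-invariant, so a function $I:\{1,\dots,n\}\to\{1,\dots,N\}$ is the same data as the family of its restrictions $I|_c$, $c\in\sC(\gamma)$, and
\[
\prod_{i=1}^n[\mx_{t(i)}]_{I(i),I(\alpha(i))}=\prod_{c\in\sC(\gamma)}\ \prod_{j\in c}[\mx_{t(j)}]_{I(j),I(\alpha(j))},
\]
the inner product over $j\in c$ depending only on $I|_c$. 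Hence it suffices to prove the one-cycle versions of \eqref{MingoNica} and \eqref{MingoNica2} and then multiply over cycles.

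For a single cycle I would write $c=(j_1,j_2,\dots,j_\ell)$ in the orientation in which $\alpha(j_m)=j_{m+1}$ (indices read cyclically, $j_{\ell+1}=j_1$), so that by definition $p_{\gamma,t}$ contributes the factor $\tr(\mx_{t(j_1)}\mx_{t(j_2)}\cdots\mx_{t(j_\ell)})$. Expanding the matrix product and the trace gives
\[
\tr\bigl(\mx_{t(j_1)}\cdots\mx_{t(j_\ell)}\bigr)=\sum_{k_1,\dots,k_\ell=1}^N[\mx_{t(j_1)}]_{k_1,k_2}[\mx_{t(j_2)}]_{k_2,k_3}\cdots[\mx_{t(j_\ell)}]_{k_\ell,k_1},
\]
and the summation over $(k_1,\dots,k_\ell)\in\{1,\dots,N\}^\ell$ becomes a summation over functions $I:c\to\{1,\dots,N\}$ via $I(j_m)=k_m$; since $I(\alpha(j_m))=I(j_{m+1})=k_{m+1}$, the $m$-th factor is exactly $[\mx_{t(j_m)}]_{I(j_m),I(\alpha(j_m))}$. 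Multiplying these cycle-by-cycle identities and recombining the restrictions $I|_c$ into a single $I$ yields \eqref{MingoNica}.

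Formula \eqref{MingoNica2} follows by repeating the computation verbatim on $\tr(\prod_{j\in c}\mx_{t(j)}^{-\eps_j})$: the trace of a product of matrices (some of them transposed) again expands into a cyclic product of scalar entries $[\mx_{t(j_m)}^{-\eps_{j_m}}]_{k_m,k_{m+1}}$, and carrying the superscript $-\eps_{j_m}$ unchanged through the identification $k_m=I(j_m)$ reproduces precisely the right-hand side of \eqref{MingoNica2}. I do not expect a real obstacle: the lemma merely restates \eqref{r}--\eqref{qr}, as in \cite[Lemma 1]{Bryc-06} and the computations of \cite{Mingo-Nica-04}; the only point needing attention is to fix the cycle orientation so that the running summation index is paired as $(I(i),I(\alpha(i)))$ and not as $(I(\alpha^{-1}(i)),I(i))$.
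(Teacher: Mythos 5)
Your argument is correct and is exactly the standard expansion-of-trace computation that the paper implicitly invokes by citing \cite[Lemma 1]{Bryc-06} (the paper gives no proof of its own here). The one point you rightly flag — fixing the orientation so that the running index is paired as $(I(i),I(\alpha(i)))$, which follows from the paper's convention $\alpha(j_m)=j_{m+1}$ along the visit order and, for $\gamma\in\calF_n^+$, from $\alpha(j)=-\gamma(j)$ — is the only place where care is needed, and you handle it; the extension to $q_{\gamma,t}$ by substituting $\mx_{t(j)}^{-\eps_j}$ is likewise immediate.
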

\begin{proofof}{Proof of Theorem \ref{T1}}\label{OldProofOfT1}
Denote $\rho=\pi(\sigma)\in\Sn_n$. Expanding matrix multiplications in \eqref{MingoNica}, we write the polynomial on the
left hand side of \eqref{Main formula} as
\begin{multline}\label{all cycles}
p_{\sigma,t}(\mW_1,\mW_2,\dots,\mW_s)
=\sum_{I:\{1,\dots,n\}\to\{1,\dots,N\}}
\prod_{i=1}^n [A_{t(i)}'\mX_{t(i)}'B_{t(i)}\mX_{t(i)}A_{t(i)}]_{I(i),I(\rho(i))}
\\
=\sum_{I,U,K,J,V}\left(\prod_{i=1}^n [\mX_{t(i)}']_{U(i),K(i)}[\mX_{t(i)}]_{J(i),V(i)}\right) 
\prod_{i=1}^n
[A_{t(i)}']_{I(i),U(i)}[A_{t(i)}]_{V(i),I(\rho(i))}
 \prod_{i=1}^n
[B_{t(i)}]_{K(i),J(i)}\;,
\end{multline}
where the sum is now taken over  all functions
$I,U, V:\{1,\dots,n\}\to\{1,\dots,N\}$ and over all  functions $K,J:\{1,\dots,n\}\to\{1,\dots,M\}$.
The Wick formula expresses the expectation of the product of jointly normal centered
random variables as the sum  over all pairings of product of the covariances for each pair of random variables,
compare Definition \ref{q-Prob Space} with $q=1$.
To describe the pairings,
we associate
positive integers $j=1,2,\dots,n$ with $\mX_{t(j)}'$ and the negative integers $-j$ with $\mX_{t(j)}$.
Then the  pairings are determined by $\gamma\in\calF_n(t)$;
 denoting by $\alpha=\pi(\gamma)$, Table \ref{Tbl2} lists four types of matches that might
 contribute to the moments.

\begin{table}[h]
\caption{\label{Tbl2} Types of matches along $\gamma\in\calF_n(t)$; here $\alpha=\pi(\gamma)\in\Sn_n$. }\begin{tabular}{|c|c|c|c|}
\hline Match type & graph & matching entries & identification of indices\\ \hline\hline
  A  &
 $ \begin{matrix}\xymatrix  @-1pc{
   {^j_\bullet} \ar@{-}[dr] & {^{\alpha(j)}_\bullet}
    \\
    {^\bullet} &{^\bullet}  }\end{matrix}$ &
    $[\mX_{t(j)}']_{U(j)K(j)}=[\mX_{t(\alpha(j))}]_{J(\alpha(j)) V(\alpha(j))}$ & $U(j)=V(\alpha(j))$, $K(j)=J(\alpha(j))$\\
\hline  B  &
 $ \begin{matrix}\xymatrix  @-1pc{
   {^j_\bullet} \ar@{-}[r] & {^{\alpha(j)}_\bullet}
    \\
    {^\bullet} &{^\bullet}  }\end{matrix}$ &
    $[\mX_{t(j)}']_{U(j)K(j)}=[\mX_{t(\alpha(j))}']_{U(\alpha(j)) K(\alpha(j))}$ & $U(j)=U(\alpha(j))$, $K(j)=K(\alpha(j))$\\
 \hline C &
 $ \begin{matrix}\xymatrix  @-1pc{
   {^j_\bullet}  & {^{\alpha(j)}_\bullet}
    \\
    {^\bullet}\ar@{-}[r] &{^\bullet}  }\end{matrix}$ &
    $[\mX_{t(j)}]_{J(j)V(j)}=[\mX_{t(\alpha(j))}]_{J(\alpha(j)) V(\alpha(j))}$ & $J(j)=J(\alpha(j))$, $V(j)=V(\alpha(j))$\\
\hline  $A^{-1}$  &
 $ \begin{matrix}\xymatrix  @-1pc{
   {^j_\bullet}  & {^{\alpha(j)}_\bullet}\ar@{-}[dl]
    \\
    {^\bullet} &{^\bullet}  }\end{matrix}$ &
    $[\mX_{t(j)}]_{J(j)V(j)}=[\mX_{t(\alpha(j))}']_{U(\alpha(j)) K(\alpha(j))}$ & $J(j)=K(\alpha(j))$, $V(j)=U(\alpha(j))$\\\hline\hline
\end{tabular}
\end{table}
The expectation  becomes
\begin{equation*}
\E\left(\prod_{i=1}^n [\mX_{t(i)}']_{U(i),K(i)}[\mX_{t(i)}]_{J(i),V(i)}\right)=
\sum_{\gamma\in \calF_n(t)} \chi_A(\gamma,U,V)\chi_B(\gamma, K,J).
\end{equation*}
where  $\chi_A(\gamma,U,V)\chi_B(\gamma, K,J)=1$ only when functions
$U,V$ and $K,J,$ fulfill the matches specified in the last column of Table \ref{Tbl2} at each edge of $\gamma$;
otherwise, $\chi_A(\gamma,U,V)\chi_B(\gamma, K,J)=0$.

This allows us to write
$$\E(p_{\sigma,t}(\mW_1,\mW_2,\dots,\mW_s))=\sum_{\gamma\in\calF_n(t)} \Pi_A(\gamma) \Pi_B(\gamma),$$
 where
  \begin{eqnarray*}
\Pi_A(\gamma) &:=&\sum_{I,U,V}\chi_A(\gamma,U,V)\prod_{i=1}^n
[A_{t(i)}']_{I(i),U(i)}[A_{t(i)}]_{V(i),I(\rho(i))},\\
\Pi_B(\gamma)&:=&
\sum_{K,J}\chi_B(\gamma, K,J) \prod_{i=1}^n
[B_{t(i)}]_{K(i),J(i)}.\end{eqnarray*}

It remains to prove the following two results.

\begin{lemma}\label{L-A} If $\gamma\in\calF_n(t)$ then
\begin{equation}
  \label{Pi2A}\Pi_A(\gamma)=p_{\sigma\circledcirc\gamma,t(\sigma,\gamma)}(\Sigma_1,\dots,\Sigma_s).
\end{equation}
\end{lemma}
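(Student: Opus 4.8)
The plan is to compute $\Pi_A(\gamma)$ by first eliminating the auxiliary summation variables $U,V$ using the constraints encoded in $\chi_A(\gamma,U,V)$, and then recognizing what remains as a product of traces indexed by the cycles of $\pi(\sigma\circledcirc\gamma)$. First I would observe that $\Pi_A(\gamma)$ is built only from the matrices $A_{t(i)}$ (the symmetric roots, $A_j^2=\Sigma_j$) and that a match of type $A$ or $A^{-1}$ along an edge $\{i,\gamma(i)\}$ of $\gamma$ forces an identification among $U,V,I$-values that, after summing out $U(i)$ or $V(i)$, merges two factors $A$ (or $A$ and $A'$) into a single factor $A A = \Sigma$, $A A' = \Sigma$, or $A' A = \Sigma$ — in each case a copy of some $\Sigma_{t(i)}$, since $A_j$ is symmetric. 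The $\eps$-signs of $\gamma$ are exactly what records whether we pick up $AA$, $AA'$, $A'A$, or $A'A'$; because $A_j'=A_j$, all four collapse to $\Sigma_{t(i)}$, which is why $q_{\gamma,t}$ does not appear on the $\Sigma$-side and we only see $p_{\sigma\circledcirc\gamma,\,\cdot}$.

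Next I would make the bookkeeping precise by tracking the free index functions. After imposing the type-$A$/$A^{-1}$ constraints, the surviving indices are the values of $I$ together with one $M$-or-$N$-valued index per edge of $\gamma$; the product $\prod_i [A_{t(i)}']_{I(i),U(i)}[A_{t(i)}]_{V(i),I(\rho(i))}$ then factors along the connected components one gets by gluing the $\rho$-edges (from $\sigma$) to the $\gamma$-edges — which is exactly the graph-theoretic description of the Brauer product $\sigma\circledcirc\gamma$. Walking along $\delta\cup(\sigma\circledcirc\gamma)$, each cycle $c$ of $\pi(\sigma\circledcirc\gamma)$ contributes a closed chain of the form $\tr(\prod_{j} \Sigma_{?})$, and the subscript appearing at the $j$-th step is, by construction, the color of the edge of $\sigma\circledcirc\gamma$ crossed at that step, i.e. $t(\sigma,\gamma)(j)$ in the notation of Definition \ref{Def t-modified}. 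Summing the remaining free $I$-indices closes each chain into a genuine trace, yielding $\prod_{c\in\sC(\sigma\circledcirc\gamma)}\tr(\prod_{j\in c}\Sigma_{t(\sigma,\gamma)(j)}) = p_{\sigma\circledcirc\gamma,\,t(\sigma,\gamma)}(\Sigma_1,\dots,\Sigma_s)$.

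The main obstacle I anticipate is the combinatorial identification: proving cleanly that the components of the glued three-row graph (upper row of $\sigma$ placed over the lower row of $\gamma$, keeping the $A$-factors as the connecting edges of the first and third rows) are in bijection with the cycles of $\pi(\sigma\circledcirc\gamma)$, and that the color picked up along each step of such a cycle is precisely $t(\sigma,\gamma)$ as defined via the path on $\delta\cup(\sigma\circledcirc\gamma)$. This is essentially the real-Wishart analogue of the corresponding step in \cite{Bryc-06}, and I would handle it by the same device: identifying $\calF_n^+$-elements with permutations via \eqref{pi on F+}, using \eqref{pi-composition} to turn the Brauer product into composition, and then invoking the standard fact (as in Lemma \ref{L MingoNica}) that a product of matrix entries chained along a permutation equals a product of traces over that permutation's cycles. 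The only genuinely new point compared to the complex case is checking that the symmetry $A_j=A_j'$ makes all four match types in Table \ref{Tbl2} contribute the same $\Sigma_{t(i)}$, so that the $\eps$-signs are irrelevant on the scale side; this is a short direct verification once the index identifications of the last column of Table \ref{Tbl2} are substituted in.
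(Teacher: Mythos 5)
Your opening picture is correct and matches the first half of the paper's argument: impose the type-$A/A^{-1}$ identifications forced by $\chi_A$, sum out $U,V$, and merge each pair $A_{t(i)}',A_{t(i)}$ into a single $\Sigma_{t(i)}$ (symmetry of $A$ making the four sign-cases indistinguishable on the $\Sigma$-side). That step is exactly how the paper arrives at the intermediate formula $\Pi_A(\gamma)=\sum_I\prod_{\{u,v\}\in\gamma}[\Sigma_{t(u)}]_{\widetilde{I}(u),\widetilde{I}(v)}$.

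The gap is in your ``main obstacle'' paragraph, where you propose to finish ``by the same device'' as the complex case: identifying pair partitions with permutations via \eqref{pi on F+} and using \eqref{pi-composition} to replace the Brauer product by composition of permutations, then invoking Lemma~\ref{L MingoNica}. This cannot work here. Both \eqref{pi on F+} and \eqref{pi-composition} presuppose that \emph{all} factors lie in $\calF_n^+$; that hypothesis is precisely what fails in the real Wishart case, where $\gamma$ ranges over \emph{all} of $\calF_n(t)$, and indeed $\sigma\circledcirc\gamma$ is typically not in $\calF_n^+$ either (see, e.g., the second and later rows of Table~\ref{Table1}, where $\sigma\circledcirc\gamma$ has pairs entirely within one row). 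So there is no canonical permutation $\pi(\gamma)$ with $\pi(\sigma\circledcirc\gamma)=\pi(\sigma)\circ\pi(\gamma)$ to feed into Lemma~\ref{L MingoNica}. This is exactly why the paper's remark after Theorem~\ref{T1} insists that the Brauer product is being used only with one factor from $\calF_n^+$, and why the genuine content of the real case lives in the Brauer structure rather than in $S_n$ composition.

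What the paper does instead — and what your proposal is missing — is a direct factor-by-factor verification: it defines a bijection $\tau$ from the pairs of $\gamma$ to the pairs of $\sigma\circledcirc\gamma$ (Table~\ref{Tbl3a}), splits into cases according to the signs of the resulting pair $\{r,s\}$ (cases $A$-1, $A$-2, $B$, $C$), and in each case uses \eqref{I-tilde} plus the symmetry of $\Sigma$ to show $[\Sigma_{t(u)}]_{\widetilde{I}(u),\widetilde{I}(v)} = [\Sigma_{t(\sigma,\gamma)(|r|)}]_{I(|r|),I\circ\beta(|r|)}$ where $\beta=\pi(\sigma\circledcirc\gamma)$. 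Note in particular that the coloring genuinely changes, $t(\sigma,\gamma)\ne t$, in the cases where $r<0$ (it picks up $t\circ\rho^{-1}$); you acknowledge this verification as the hard point but offer no mechanism for it other than the composition shortcut that is unavailable. Without a replacement for that step — something that tracks how $\widetilde{I}$ relabels when the $\gamma$-edge pairs two upper-row vertices or two lower-row vertices — the proof does not close.
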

\begin{lemma}\label{L-B}If $\gamma\in\calF_n(t)$ then
$\Pi_B(\gamma)=q_{\gamma,t}(B_1,\dots,B_s)$.
\end{lemma}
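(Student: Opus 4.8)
The plan is to evaluate the finite index sum $\Pi_B(\gamma)=\sum_{K,J}\chi_B(\gamma,K,J)\prod_{i=1}^n[B_{t(i)}]_{K(i),J(i)}$ by determining exactly which of the $2n$ ``row indices'' $K(1),\dots,K(n),J(1),\dots,J(n)$ are forced to coincide, and then to match the collapsed sum against the expansion \eqref{MingoNica2} of $q_{\gamma,t}(B_1,\dots,B_s)$. Writing $\alpha=\pi(\gamma)$ and $(\eps_1,\dots,\eps_n)=\eps(\gamma)$, formula \eqref{MingoNica2} applied to the $M\times M$ matrices $B_j$ (so that the index functions run over $\{1,\dots,n\}\to\{1,\dots,M\}$) reads
\[
q_{\gamma,t}(B_1,\dots,B_s)=\sum_{L}\,\prod_{i=1}^n [B_{t(i)}^{-\eps_i}]_{L(i),L(\alpha(i))},
\]
so it suffices to produce a bijection between functions $L$ and pairs $(K,J)$ with $\chi_B(\gamma,K,J)=1$ under which $[B_{t(i)}]_{K(i),J(i)}=[B_{t(i)}^{-\eps_i}]_{L(i),L(\alpha(i))}$ for every $i$.

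I would first record the structural fact, read off the last column of Table \ref{Tbl2}, that each edge of $\gamma$ imposes -- apart from the $U,V$ equality already used in $\chi_A$ -- exactly one equality among the $K$'s and $J$'s: an edge $\{+j,+k\}$ forces $K(j)=K(k)$, an edge $\{-j,-k\}$ forces $J(j)=J(k)$, and an edge $\{+j,-k\}$ forces $K(j)=J(k)$. Since $K(i)$ (resp.\ $J(i)$) is attached to the vertex $+i$ (resp.\ $-i$) of $\gamma$ and each of these $2n$ vertices lies on a single edge, every one of the $2n$ row indices occurs in exactly one such equality; hence the $\chi_B$-identifications form a perfect matching of $\{K(1),\dots,K(n)\}\cup\{J(1),\dots,J(n)\}$ and split it into precisely $n$ two-element classes. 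I would then label the classes by $\{1,\dots,n\}$, attaching to the class of $K(i)$ the label $\alpha(i)$ if $\eps_i=+$ and the label $i$ if $\eps_i=-$, and to the class of $J(i)$ the label $i$ if $\eps_i=+$ and $\alpha(i)$ if $\eps_i=-$. Once this is shown to be well defined -- the two members of each class getting the same label -- it is surjective, since $i$ is the label of $K(i)$ when $\eps_i=-$ and of $J(i)$ when $\eps_i=+$, hence a bijection from the $n$ classes onto $\{1,\dots,n\}$; writing $L(k)\in\{1,\dots,M\}$ for the common value of the class labelled $k$, we get $(K(i),J(i))=(L(\alpha(i)),L(i))$ if $\eps_i=+$ and $(K(i),J(i))=(L(i),L(\alpha(i)))$ if $\eps_i=-$, which, by the conventions $\mx^+=\mx$, $\mx^-=\mx'$, is precisely $[B_{t(i)}]_{K(i),J(i)}=[B_{t(i)}^{-\eps_i}]_{L(i),L(\alpha(i))}$. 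Summing over all admissible $(K,J)$ then reproduces the above display and yields $\Pi_B(\gamma)=q_{\gamma,t}(B_1,\dots,B_s)$. The hypothesis $\gamma\in\calF_n(t)$ enters only through $t\circ\alpha=t$, which guarantees that the sum over $L$ splits over the cycles of $\alpha$ into traces of monochromatic products, as demanded by \eqref{qr}.

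The main obstacle is the well-definedness of this labelling: one must verify that the single $K$/$J$ equality forced at each edge always joins two row indices with the same assigned label, and for this one has to unwind the construction of $\pi(\gamma)$ and $\eps(\gamma)$. I would run the alternating path in the $2$-regular graph $\delta\cup\gamma$ that traces a cycle of $\pi(\gamma)$: at an upper-row vertex $v$ the incident $\delta$-edge is traversed either upwards -- then $\eps_v=+$ and one proceeds along $v$'s $\gamma$-edge -- or downwards -- then $\eps_v=-$ and $v$ was reached along a $\gamma$-edge; in each of the four resulting configurations one checks directly that the $\gamma$-edge in play identifies the row index sitting at $v$ with the correctly-signed row index at $\alpha(v)$. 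This is the same bookkeeping as in the proof of Lemma \ref{L-A}, the one new feature being the transposes $B^{-\eps_i}$: they appear because in \eqref{Def gW} the matrix $B_{t(i)}$ sits between $\mX_{t(i)}'$ and $\mX_{t(i)}$, so the row index $K(i)$ of the $\mX'$-factor and the row index $J(i)$ of the $\mX$-factor play the roles of the two endpoints of edge $i$ in an order dictated by the orientation recorded by $\eps_i$.
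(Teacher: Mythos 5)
Your proposal is correct, and it reaches the conclusion by a route that differs from the paper's in packaging rather than in substance. The paper reparametrizes the pairs $(K,J)$ by a single function $\widetilde K$ on $\{\pm1,\dots,\pm n\}$ via \eqref{tilde(K)}, observes that $\chi_B$ enforces constancy of $\widetilde K$ across each edge of $\gamma$, then \emph{factors $\Pi_B$ over the cycles of $\pi(\gamma)$} and evaluates each cycle-sum as a trace $\tr\bigl(\prod_{j\in c}B_t^{-\eps_j}\bigr)$ by explicitly listing the $\gamma$- and $\delta$-edges along the cycle in terms of $\eps$. You instead take the sum representation \eqref{MingoNica2} of $q_{\gamma,t}(B_1,\dots,B_s)$ as the target and exhibit a global bijection between index functions $L$ and admissible pairs $(K,J)$ that equates the summands term by term, side-stepping any explicit trace computation. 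The combinatorial kernel is identical in both arguments: the $\gamma$-edge joining consecutive cycle elements $j_i$ and $j_{i+1}=\alpha(j_i)$ is $\{\eps_{j_i}j_i,\,-\eps_{j_{i+1}}j_{i+1}\}$, and both endpoints inherit the same ``label'' $j_{i+1}$ (the two endpoint cases for $\eps_{j_i}$ both yield $\alpha(j_i)$, and the two for $\eps_{j_{i+1}}$ both yield $j_{i+1}$), which is exactly the well-definedness check you defer and which goes through. Your approach buys a somewhat cleaner proof (no cyclic index gymnastics, and the bijectivity of the class-labelling is visible at a glance), at the cost of having to invoke \eqref{MingoNica2} for general $\gamma\in\calF_n$ rather than working directly from the definition \eqref{qr}; the paper's version is more explicit about where the transposes $B^{-\eps_j}$ come from. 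Both are valid; yours is a reasonable global reformulation of the paper's cycle-local argument.
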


\begin{proofof}{Proof of Lemma \ref{L-B}}

Given $K,J:\{1,\dots,,n\}\to\{1,\dots,M\}$, define $\widetilde{K}:\{\pm 1,\dots,\pm n\}\to \{1,\dots,M\}$ by
\begin{equation}
  \label{tilde(K)}
\widetilde{K}(j)=\begin{cases}
K(j)& \mbox{ if $j>0$}, \\
J(-j) & \mbox{ if $j<0$}.
\end{cases}
\end{equation}
It is clear that $(K,J)\mapsto \widetilde{K}$ is a bijection
between the set of all pairs of functions $K,J:\{1,\dots,,n\}\to\{1,\dots,M\}$ and all functions
 $\widetilde{K}:\{\pm 1,\dots,\pm n\}\to \{1,\dots,M\}$.

 From Table \ref{Tbl2} it follows that
\begin{equation}
  \label{chiB}
  \chi_B(\gamma,J,K)=\prod_{\{u,v\}\in\gamma}1_{\widetilde{K}(u)=\widetilde{K}(v)}.
\end{equation}

For a fixed $\gamma\in\calF_n$,
 denote
$\alpha=\pi(\gamma)$, $(\eps_j)=\eps(\gamma)$.
Then we can factor $\Pi_B(\gamma)$ over the cycles of $\alpha$,
\begin{multline*}
\Pi_B(\gamma)=\sum_{K,J}\chi_B(\gamma, K,J) \prod_{i=1}^n
[B_{t(i)}]_{K(i),J(i)}=\prod_{c\in C(\gamma)}\sum_{\left.K\right|_c,\left.J\right|_c}
\chi_B(\gamma, \left.K\right|_c,\left.J\right|_c)
\prod_{j\in c}[B_{t(j)}]_{K(j),J(j)}.
\end{multline*}
On a single cycle $c=(j_1,\dots,j_r)$ of $\alpha$,  $t:=\left.t\right|_c$ is constant.
So to end the proof, it suffices to show that
  \begin{equation}
    \label{BBB}
    \sum_{K,J:c\to \{1,\dots,M\}}
\chi_B(\gamma, K,J)
\prod_{j\in c}[B_{t}]_{K(j),J(j)}=\tr\left(\prod_{j\in c} B_t^{-\eps_j}\right).
  \end{equation}
We note that cycle $c$ corresponds to the sequence
$$
C=(j_1,\pm j_2),(\mp j_2,\pm j_3),\dots,(\mp j_{r-1},\pm j_r),(\mp j_r,-j_1)$$
of the ordered edges of $\gamma$. The corresponding edges of $\delta$ are
$$
\delta\left|_C\right.=((-j_1,j_1),(\pm j_2,\mp j_2),\dots, (\pm j_r,\mp j_r)).
$$
These two sequences can be written directly in terms of $(\eps_j)$ as
$$
\delta\left|_C\right.=(-j_1,j_1),(-\eps_{j_2} j_2,\eps_{j_2} j_2),\dots, (-\eps_{j_r} j_r,\eps_{j_r} j_r),
$$
$$
C=(j_1,-\eps_{j_2} j_2),(\eps_{j_2} j_2,-\eps_{j_3} j_3),\dots,(\eps_{j_{r-1}} j_{r-1},-\eps_{j_r} j_r),(\eps_{j_r} j_r,-j_1).$$

For fixed $K,J$, the expression under the sum on the
left hand side of \eqref{BBB} can be written as
\begin{multline}
  \chi_B(\gamma, K,J)
\prod_{j\in c}[B_{t}]_{K(j),J(j)}=
\prod_{i=1}^r 1_{\widetilde{K}(\eps_{j_i}j_i)=\widetilde{K}(-\eps_{j_{i+1}}j_{i+1})}[B_t]_{\widetilde{K}(j_i),\widetilde{K}(-j_i)}\\
=\prod_{i=1}^r 1_{\widetilde{K}(\eps_{j_i}j_i)=\widetilde{K}(-\eps_{j_{i+1}}j_{i+1})}
[B_t^{-\eps(j_i)}]_{\widetilde{K}(-\eps_{j_i} j_i),\widetilde{K}(\eps_{j_i} j_i)}\\
=\prod_{i=1}^r 1_{\widetilde{K}(\eps_{j_i}j_i)=\widetilde{K}(-\eps_{j_{i+1}}j_{i+1})}
[B_t^{-\eps(j_i)}]_{\widetilde{K}(-\eps_{j_i} j_i),\widetilde{K}(-\eps_{j_{i+1}} j_{i+1})}.
\end{multline}
(Here we use cyclic convention $j_{r+1}:=j_1$.) Thus the left hand side of \eqref{BBB} is
\begin{multline}
  \sum_{\widetilde{K}:\{\pm j_1,\dots,\pm j_r\}\to\{1,\dots,M\}}
  \prod_{i=1}^r 1_{\widetilde{K}(\eps_{j_i}j_i)=\widetilde{K}(-\eps_{j_{i+1}}j_{i+1})}
[B_t^{-\eps(j_i)}]_{\widetilde{K}(-\eps_{j_i} j_i),\widetilde{K}(-\eps_{j_{i+1}} j_{i+1})}\\
= \sum_{\widetilde{K}:\{-\eps_{j_1} j_1,\dots,-\eps_{j_r} j_r\}\to\{1,\dots,M\}}
  \prod_{i=1}^r[B_t^{-\eps(j_i)}]_{\widetilde{K}(-\eps_{j_i} j_i),\widetilde{K}(-\eps_{j_{i+1}} j_{i+1})}
  =\tr\left(\prod_{j\in c} B_t^{-\eps_j}\right),
\end{multline}
 proving \eqref{BBB}.
\end{proofof}
\begin{proofof}{Proof of Lemma \ref{L-A}}

We begin by rewriting $\Pi_A(\gamma)$ in terms of matrices $\Sigma_1,\dots,\Sigma_s$.
Recall that $\rho=\pi(\sigma)\in\Sn_n$. We claim that
with
$\widetilde{I}:\{\pm 1,\dots,\pm n\}\to\{1,\dots,N\}$ defined by
\begin{eqnarray}\label{I-tilde}
 \widetilde{I}(j)&:=& \begin{cases}
   I(j) & \mbox{ if $j>0$},\\
   I(\rho(-j))& \mbox{ if $j<0$},
 \end{cases}
\end{eqnarray}
and with $t:\{\pm 1,\dots,\pm n\}\to\{1,\dots,s\}$ extended by $t(u):=t(|u|)$,
we have
\begin{equation}
  \label{PiA-Sigma}
\Pi_A(\gamma)=\sum_{I:\{1,\dots,n\}\to\{1,\dots,N\}} \prod_{\{u,v\}\in\gamma}
[\Sigma_{t(u)}]_{\widetilde{I}(u),\widetilde{I}(v)}.
\end{equation}
To prove \eqref{PiA-Sigma}, we re-parameterize each pair of functions $U,V$ by a single function
$\widetilde{U}:\{\pm 1,\dots,\pm n\}\to\{1,\dots,N\}$ defined by
\begin{eqnarray}
  \widetilde{U}(j)&:=& \begin{cases}
    U(j) & \mbox{ if $j>0$}, \\
   V(-j) & \mbox{ if $j<0$}. \\
  \end{cases} \label{U-tilde}
\end{eqnarray}
By inspecting the rows of Table \ref{Tbl2} we see that
$$\chi_A(\gamma,U,V)=\prod_{\{u,v\}\in\gamma} 1_{\widetilde{U}(u)=\widetilde{U}(v)}.$$
For fixed $I$,
$$\sum_{U,V}\chi_A(\gamma,U,V)\prod_{i=1}^n
[A_{t(i)}']_{I(i),U(i)}[A_{t(i)}]_{V(i),I(\rho(i))}
$$
factors over the cycles of $\alpha$ into the product of
$$\sum_{U,V:c\to \{1,\dots,N\}}\chi_A(\gamma, U,V)
\prod_{j\in c}
[A_{t(j)}']_{I(j),U(j)}[A_{t(j)}]_{V(j),I(\rho(j))}.$$
The corresponding set $C$ of ordered edges  of $\gamma$ is
$$
C=(u_1,v_1),(u_2,v_2),\dots,(u_r,v_r),
$$
and, as in the proof of Lemma \ref{L-B}, we have $u_{k+1}=-v_k$, $k=1,\dots,r$ with $u_{r+1}:=u_1>0$.
Of course, $c=(j_1,\dots,j_r)=(|u_1|,\dots,|u_r|)$, and $t$ is constant on $c$.
Thus, using the symmetry of $A$,
\begin{multline}
\prod_{j\in c}
[A_{t(j)}']_{I(j),U(j)}[A_{t(j)}]_{V(j),I(\rho(j))}=
\prod_{k=1}^r [A_t]_{\widetilde{U}(j_k),\widetilde{I}(j_k)}[A_t]_{\widetilde{U}(-j_k),\widetilde{I}(-j_k)}
\\
=\prod_{k=1}^r [A_t]_{\widetilde{I}(-v_k),\widetilde{U}(-v_k)}[A_t]_{\widetilde{U}(v_{k+1}),\widetilde{I}(v_{k+1})}.
\end{multline}

Since $\chi_A=1$ implies $\widetilde{U}(u_k)=\widetilde{U}(v_k)$, we see that
$$
\widetilde{U}(v_{k+1})=\widetilde{U}(-v_{k}).
$$
Thus,
\begin{multline}
 \sum_{U,V:c\to\{1,\dots,N\}} \chi_A(\gamma, U,V)
\prod_{j\in c}
[A_{t(j)}']_{I(j),U(j)}[A_{t(j)}]_{V(j),I(\rho(j))}\\=
\sum_{\widetilde{U}:\{\pm j_1,\dots,\pm j_r\}\to\{1,\dots,N\}}
\prod_{k=1}^r  1_{\widetilde{U}(v_{k+1})=\widetilde{U}(-v_{k})}
[A_t]_{\widetilde{I}(-v_k),\widetilde{U}(-v_k)}[A_t]_{\widetilde{U}(v_{k+1}),\widetilde{I}(v_{k+1})}
\\=
\sum_{\widetilde{U}:\{-v_1,\dots,-v_r\}\to\{1,\dots,N\}}[A_t]_{\widetilde{I}(-v_k),\widetilde{U}(-v_k)}
[A_t]_{\widetilde{U}(-v_k),\widetilde{I}(v_{k+1})}
=\prod_{\{u,v\}\in C} [\Sigma_t]_{\widetilde{I}(u),\widetilde{I}(v)}.
\end{multline}

We now fix $\gamma\in\calF_n(t)$ and consider 
\eqref{PiA-Sigma}, which we want to factor over the cycles
of $\beta=\pi(\sigma\circledcirc\gamma)$. To do so, let $\tau$ be the bijection from pairs
$\{u,v\}\in\gamma$ onto the corresponding pairs $\{r,s\}\in\sigma\circledcirc\gamma$ which
appeared in Definition \ref{Def t-modified}, see Table \ref{Tbl3a}.

Since $\tau$ is a bijection and the cycles of  $\beta$ are in one-to-one correspondence with the
sequences of pairs in $\sigma\circledcirc\gamma$, to prove
\eqref{Pi2A}
it suffices to show that if $c$ is a cycle of $\beta$ and $C$ is the corresponding sequence of pairs in
$\sigma\circledcirc\gamma$, then
\begin{equation}\label{(*)}
  \prod_{\{u,v\}\in\tau^{-1}(C)}[\Sigma_{t(u)}]_{\widetilde{I}(u),\widetilde{I}(v)}=
  \prod_{j\in c}[\Sigma_{t(\sigma,\gamma)(j)}]_{I(j),I\circ\beta(j)}
\end{equation}

\begin{table}[h]
\caption{\label{Tbl3a} Bijection $\tau$ from pairs $\{u,v\}\in\gamma$ to
pairs $\{r,s\}\in\sigma\circledcirc\gamma$. Here 
$\rho=\pi(\sigma)\in\Sn_n$. }\begin{tabular}{|c|c|c|}\hline
Case & $\{u,v\}\in\gamma$ & $\{r,s\}=\tau(\{u,v\})$ \\ \hline\hline
  $A$  &
 $ \begin{matrix}\xymatrix  @-1pc{
   {^u_\bullet} \ar@{-}[drr] & \dots & {_\bullet} &{^{\rho(-v)}_{\hspace{3mm}\bullet}}
    \\
    {_\bullet} & \dots & {_\bullet^v}\ar@{-}[dr]&{_\bullet}
     \\
    {^\bullet}&\dots &{^\bullet}&  {^\bullet} }\end{matrix}$ &
  $ \{r,s\}=\{u,-\rho(-v)\} $
        \\
\hline  B  &
 $ \begin{matrix}\xymatrix  @-1pc{
   {^u_\bullet}\ar@/^/@{-}[rr]& \dots & {^{v}_{\hspace{2mm}\bullet}}
    \\
    {^\bullet}& \dots  &{^\bullet}  }\end{matrix}$ &
  $ \{r,s\}=\{u,v\}$
    \\ \hline
 $C$ &
 $ \begin{matrix}\xymatrix  @-1pc{
  {^{-u}_\bullet} & {_\bullet}
 &\dots & {^{-v}_{\bullet}} & {_{\bullet}}
  \\
{^u_\bullet}\ar@/^/@{-}[rrr]&   {_\bullet} &\dots &{^v_\bullet}\ar@{-}[dr] &
  \\
 {_\bullet}&   {^{-\rho(-u)}_\bullet}\ar@{-}[ul] &\dots &{_\bullet}  &{^{-\rho(-v)}_\bullet}
}\end{matrix}$
 &
 $ \{r,s\}=\{-\rho(-u),-\rho(-v)\}$
 \\ \hline
    \end{tabular}
\end{table}

We prove \eqref{(*)} by showing that each of the factors
on the left hand side of \eqref{(*)} can be rewritten into the corresponding factor
on the right hand side of \eqref{(*)}. To do so, let
$\{u,v\}=\tau^{-1}(\{r,s\})$, where $\{r,s\}\in C$  are such that
$|r|,|s|$ are the consecutive numbers in cycle $c$, i.e., $|s|=\beta(|r|)$.

We note that $\tau$ preserves the number of positive
elements in each pair $\{u,v\}$; thus if $u,v>0$ then $r,s>0$ and similarly if $uv<0$ then $rs<0$;
 Table \ref{Tbl3a} lists all possible cases with the intermediate three-row diagram.

We now consider all possible cases.
\begin{itemize}
  \item[A-1] Suppose $r<0$, $s>0$. Since $\Sigma_{t(u)}=\Sigma_{t(v)}$ is symmetric,
  without loss of generality
  we may assume $u>0, v<0$.
  Then $u=s$ and $r=-\rho(-v)$, so
  \eqref{I-tilde} gives
\begin{multline*}
[\Sigma_{t(u)}]_{\widetilde{I}(u),\widetilde{I}(v)}=
[\Sigma_{t(-v)}]_{\widetilde{I}(v),\widetilde{I}(u)}=[\Sigma_{t\circ\rho^{-1}(|r|)}]_{{I}(|r|),
{I}(s)}=
[\Sigma_{t(\sigma,\gamma)(|r|)}]_{{I}(|r|),{I\circ\beta}(|r|)}.
\end{multline*}

   \item[A-2] Suppose $r>0$, $s<0$. Since $\Sigma_{t(u)}=\Sigma_{t(v)}$ is symmetric, without loss of generality
  we may assume $u>0, v<0$.
 Then $u=r$ and $s=-\rho(-v)$, so
  \eqref{I-tilde} gives
$$
[\Sigma_{t(u)}]_{\widetilde{I}(u),\widetilde{I}(v)}=[\Sigma_{t(|r|)}]_{{I}(r),
{I}(|s|)}=[\Sigma_{t(\sigma,\gamma)(|r|)}]_{{I}(|r|),{I\circ\beta}(|r|)}.
$$
  \item[B] Suppose $r>0$, $s>0$. Then $\{u,v\}=\{r,s\}$ and \eqref{I-tilde} gives
$$
[\Sigma_{t(u)}]_{\widetilde{I}(u),\widetilde{I}(v)}=[\Sigma_{t(r)}]_{{I}(r),
{I}(s)}=[\Sigma_{t(|r|)}]_{{I}(|r|),
{I\circ\beta}(|r|)},
$$

  \item[C] Suppose $r<0$, $s<0$. Then swapping the roles of $u,v$ if necessary,
  we may assume that $r=-\rho(-u)$ and $s=-\rho(-v)$, so  \eqref{I-tilde} gives
$$
[\Sigma_{t(u)}]_{\widetilde{I}(u),\widetilde{I}(v)}=[\Sigma_{t\circ\rho^{-1}(|r|)}]_{{I}(|r|),
{I}(|s|)}=
[\Sigma_{t(\sigma,\gamma)(|r|)}]_{{I}(|r|),{I\circ\beta}(|r|)}.
$$
\end{itemize}

\end{proofof}
\end{proofof}

\begin{proofof}{Proof of Theorem \ref{T2}}
This proof is similar to the proof of Theorem \ref{T1} with \eqref{q-Wick} used instead of
Wick formula. Throughout the proof, we denote $\rho=\pi(\sigma)$, $\beta=\pi(\sigma\circledcirc\gamma)$.
For functions
$I:\{1,\dots,n\}\to\{1,\dots,N\}$ and $J:\{1,\dots,n\}\to\{1,\dots,M\}$,  we denote by
$\widetilde{I},\widetilde{J}$ functions on $\{\pm 1,\dots,\pm n\}$, which are defined by
\eqref{I-tilde} and $ \widetilde{J}(j):= J(|j|)$, respectively.
We also extend $t$ to $\{\pm 1,\dots,\pm n\}$ by $t(s):=t(|s|)$;
we will use the same symbol $t$ for this extension.
From \eqref{sigma_la} and \eqref{q-Wigner} we see that
\begin{multline*}
\EE\left(p_{\sigma,t}(\mW_1,\dots,\mW_s)\right)=\sum_{I,J}
\EE\left([\mX_{t(1)}]_{\widetilde{J}(1),\widetilde{I}(1)}[\mX_{t(1)}]_{\widetilde{J}(-1),\widetilde{I}(-1)}
\dots
[\mX_{t(n)}]_{\widetilde{J}(n),\widetilde{I}(n)}[\mX_{t(n)}]_{\widetilde{J}(-n),\widetilde{I}(-n)}
\right).
\end{multline*}
By \eqref{q-Wick}, this implies
\begin{equation*}
\EE\left(p_{\sigma,t}(\mW_1,\dots,\mW_s)\right)=\sum_{I,J}\sum_{\gamma\in\calF_n}
q^{\CR(\gamma)}\prod_{\{u,v\}\in\gamma}
\EE\left( [\mX_{t(u)}]_{\widetilde{J}(u),\widetilde{I}(u)}[\mX_{t(v)}]_{\widetilde{J}(v),\widetilde{I}(v)}\right).
\end{equation*}
Since
$\EE\left( [\mX_{t(u)}]_{\widetilde{J}(u),\widetilde{I}(u)}[\mX_{t(v)}]_{\widetilde{J}(v),\widetilde{I}(v)}\right)
=0$ when $t(u)\ne t(v)$, we see that the sum can be restricted to the
color-preserving partitions $\gamma\in\calF_n(t)$. Changing the order of summation, and taking into account
\eqref{q-factorization}, we get
\begin{multline*}
\EE\left(p_{\sigma,t}(\mW_1,\dots,\mW_s)\right)=\sum_{\gamma\in\calF_n(t)}
q^{\CR(\gamma)}\sum_{I,J}\prod_{\{u,v\}\in\gamma}
\EE\left( [\mX_{t(u)}]_{\widetilde{J}(u),\widetilde{I}(u)}[\mX_{t(v)}]_{\widetilde{J}(v),\widetilde{I}(v)}\right)
=\sum_{\gamma\in\calF_n(t)}
q^{\CR(\gamma)}\Pi_1(\gamma)\Pi_2(\gamma),
\end{multline*}
where
\begin{equation}
  \label{q-B}
\Pi_1(\gamma)=\sum_{J:\{1,\dots,n\}\to\{1,\dots,M\}} \prod_{\{u,v\}\in\gamma}
[B_{t(u)}]_{\widetilde{J}(u),\widetilde{J}(v)},
\end{equation}
\begin{equation}
  \label{q-Sigma}
\Pi_2(\gamma)=\sum_{I:\{1,\dots,n\}\to\{1,\dots,N\}} \prod_{\{u,v\}\in\gamma}
[\Sigma_{t(u)}]_{\widetilde{I}(u),\widetilde{I}(v)}.
\end{equation}
To end the proof, it remains to show that
\begin{equation}\label{Pi1}
  \Pi_1(\gamma)=p_{\gamma,t}(B_1,\dots,B_s)
\end{equation}
and
\begin{equation}\label{Pi2}
  \Pi_2(\gamma)=p_{\sigma\circledcirc\gamma,t(\sigma,\gamma)}(\Sigma_1,\dots,\Sigma_s).
\end{equation}
The proof of \eqref{Pi1} is similar to the proof of Lemma \ref{L-B} and is omitted.
%
Since the right hand side of \eqref{q-Sigma} is the same as the right hand side of
\eqref{PiA-Sigma}, formula \eqref{Pi2} follows from the proof of Lemma \ref{L-A}.

\end{proofof}

\section{Proof of Theorem \ref{T4}}\label{Sect Proof of T4}

 The identification of the limit in Theorem \ref{T4}  when $q=0$ or $q=1$ is a consequence of
 the following general property of $q$-Wishart matrices.
 \begin{theorem}
   \label{T5}
   Let
   $$\mW^{(N)}_1,\mW^{(N)}_2,\dots,\mW_s^{(N)}, \widetilde\mW^{(N)}_1,\widetilde\mW^{(N)}_2,\dots,
   \widetilde\mW_s^{(N)}\in\WWq{\frac1NI_N,I_M}$$ be $q$-orthogonal with $M=M(N)$ such that $M/N\to\la$.
    Fix a polynomial $Q(x_1,\dots,x_s)$ in noncommutative variables
     and let
$$X_N=\tr(Q(\mW^{(N)}_1,\dots,\mW^{(N)}_s))-\EE(\tr(Q(\mW^{(N)}_1,\dots,\mW^{(N)}_s)))$$
 and $$Y_N=\tr(Q(\widetilde\mW^{(N)}_1,\dots,\widetilde\mW^{(N)}_s))-\EE(\tr(Q(\widetilde\mW^{(N)}_1,\dots,\widetilde\mW^{(N)}_s))).$$

 Then for $m\geq 0$
   the limits
   $\lim_{N\to\infty} \EE(X_N^2)$, $\lim_{N\to\infty} \EE\left((X_N+Y_N)^m\right)$ exist, and
   \begin{equation}
     \label{cond variance}
     \lim_{N\to\infty}\left(\EE\left((X_N-Y_N)^2 (X_N+Y_N)^m\right) - 2\EE(X_N^2)
     \EE\left((X_N+Y_N)^m\right)\right)=0.
   \end{equation}
 \end{theorem}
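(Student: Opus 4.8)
The plan is to read every expectation off the moment formula \eqref{Main q-formula}, applied to the combined $q$-orthogonal family $\mW_1,\dots,\mW_s,\widetilde\mW_1,\dots,\widetilde\mW_s\in\WWq{\frac1NI_N,I_M}$. With $B=I_M$ and $\Sigma=\frac1NI_N$, each summand of \eqref{Main q-formula} equals $q^{\CR(\gamma)}M^{\#\sC(\gamma)}N^{\#\sC(\sigma\circledcirc\gamma)-n}$, which for $M/N\to\la$ behaves like $q^{\CR(\gamma)}\la^{\#\sC(\gamma)}N^{E(\gamma)}$ with $E(\gamma):=\#\sC(\gamma)+\#\sC(\sigma\circledcirc\gamma)-n$. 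The structural fact I would use first is that, since the $\mW$'s and $\widetilde\mW$'s are $q$-orthogonal, the colour constraint in $\calF_n(t)$ prevents any $\gamma$ from matching a $\mW$-entry with a $\widetilde\mW$-entry, so every contributing $\gamma$ splits as $\gamma=\gamma'\sqcup\gamma''$ along the $\mW/\widetilde\mW$ divide; and if a product of traces is written with all $\mW$-traces before all $\widetilde\mW$-traces, then $\CR(\gamma)$, $\#\sC(\gamma)$ and $\#\sC(\sigma\circledcirc\gamma)$ all split accordingly. Hence $\EE\big(\tr(Q(\mW_1,\dots,\mW_s))\,\tr(Q(\widetilde\mW_1,\dots,\widetilde\mW_s))\big)$ factors as a product, so $\EE(X_NY_N)=0$ for every $N$; together with $\EE(Y_N^2)=\EE(X_N^2)$ (the relabelling $\mW\leftrightarrow\widetilde\mW$ is a state-preserving $*$-automorphism) and traciality of $\EE$, this gives the \emph{exact} identity $\EE\big((X_N-Y_N)^2\big)=2\EE(X_N^2)$. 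So \eqref{cond variance} reduces to showing $\EE(V_N^2U_N^m)-\EE(V_N^2)\,\EE(U_N^m)\to0$, where $V_N=X_N-Y_N$ and $U_N=X_N+Y_N$.

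The second step is to expand $\EE(V_N^2U_N^m)$ by multilinearity in the choice, at each of the $m+2$ trace-slots, of the $\mW$-copy versus the $\widetilde\mW$-copy of $Q$, each term carrying the sign $(-1)^{k}$ with $k$ the number of $V$-slots assigned the $\widetilde\mW$-copy. Apply Theorem \ref{T2} to each term and classify the resulting pair partitions $\gamma$ by the partition $\calB(\gamma)$ they induce on the $m+2$ slots (two slots in the same block iff $\gamma$ matches an entry of one with an entry of the other). Three reductions then finish this step. First, the inclusion--exclusion implementing the centring cancels every $\gamma$ having a singleton block of $\calB(\gamma)$. Second, summing the $\pm$ signs over the $\mW/\widetilde\mW$-assignments compatible with a fixed $\gamma$ produces $\prod_{b\in\calB(\gamma)}(1+(-1)^{v_b})$, where $v_b$ is the number of $V$-slots in block $b$; this vanishes unless every $v_b$ is even, i.e.\ unless both $V$-slots lie in one common block. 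Third, among the survivors, the $\gamma$ whose $V$-block contains no $U$-slot are exactly those not connecting the $V$-part to the $U$-part, and by the factorization of the first paragraph these sum to precisely $\EE(V_N^2)\,\EE(U_N^m)$. Thus $\EE(V_N^2U_N^m)-\EE(V_N^2)\,\EE(U_N^m)$ is a finite sum over $\gamma$ for which the two $V$-slots share a block of $\calB(\gamma)$ that also contains at least one $U$-slot.

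The last step is the genus estimate: a connected block $b$ of $\calB(\gamma)$ joining $j_b$ traces contributes $2-j_b-2g_b$ to $E(\gamma)$ for some genus $g_b\ge0$, so after the first reduction every block contributes $\le0$, and a block joining $\ge3$ traces contributes $\le-1$. Since the distinguished $V$-block above joins at least three traces (two $V$-traces and a $U$-trace), $E(\gamma)\le-1$ and the corresponding weight is $O(N^{-1})$; summing the finitely many such terms gives $\EE(V_N^2U_N^m)-\EE(V_N^2)\,\EE(U_N^m)\to0$, which with $\EE(V_N^2)=2\EE(X_N^2)$ is \eqref{cond variance}. The same bookkeeping gives the two existence claims: $\EE(X_N^2)$ differs by $O(N^{-1})$ from its genus-zero part, in which the two $X$-traces form a single block — a finite sum of weights each tending to $q^{\CR(\gamma)}\la^{\#\sC(\gamma)}$, hence convergent — and $X_N+Y_N$ is the centred trace of the polynomial $Q(\mW_1,\dots,\mW_s)+Q(\widetilde\mW_1,\dots,\widetilde\mW_s)$ in the $2s$-element $q$-orthogonal family, so $\lim_N\EE\big((X_N+Y_N)^m\big)$ exists by the convergence in moments that follows directly from Theorem \ref{T2}. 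I expect the main obstacle to be this genus estimate — confirming that the Brauer product $\sigma\circledcirc\gamma$ still obeys the Euler bound when $\gamma\notin\calF_n^+$ — together with the careful combinatorics showing that the centring removes exactly the singleton-block diagrams and that the disconnected diagrams reproduce $\EE(V_N^2)\,\EE(U_N^m)$ on the nose.
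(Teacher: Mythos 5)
Your proof is correct, and it takes a route that is genuinely cleaner and more symmetric than the paper's. The paper factors $(x-y)^2 = x(x-y)+y(y-x)$ to reduce \eqref{cond variance} to a statement about $\EE\big(X_N(X_N-Y_N)(X_N+Y_N)^m\big)$; it then classifies pair partitions by which single cycle $c_j$ is paired with the cycle $c$ of the $X_N-Y_N$ slot, and the "bad'' terms with $j\geq 1$ are shown to cancel through a bijection $\calH_j(t_\eta)=\calH_j(\tilde t_{\eta'})$ on the space of $\mW/\widetilde\mW$-assignment vectors $\eta$, cf.\ the display \eqref{const Var 7}. You instead first prove the exact identity $\EE(X_N Y_N)=0$: $q$-orthogonality forces every contributing $\gamma$ to respect the $\mW/\widetilde\mW$ colour split, and because the $\mW$-traces precede the $\widetilde\mW$-traces the quantities $\CR(\gamma)$, $\#\sC(\gamma)$, $\#\sC(\sigma\circledcirc\gamma)$ all factor, so $\EE\big((X_N-Y_N)^2\big)=2\EE(X_N^2)$ for each finite $N$ and the target becomes $\EE(V_N^2U_N^m)-\EE(V_N^2)\EE(U_N^m)\to 0$. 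Your sign-sum $\prod_b\big(1+(-1)^{v_b}\big)$ then annihilates precisely the $\gamma$ whose two $V$-slots lie in different blocks of the induced partition --- this is the same cancellation the paper achieves via the $\calH_j$-matching, but carried out in one stroke --- while the genus estimate of Proposition~\ref{L:machi}, fed through Lemma~\ref{L M2}, gives $O(1/N)$ for the remaining $\gamma$ whose $V$-block also absorbs a $U$-slot; as a bonus your disconnected diagrams reproduce $\EE(V_N^2)\EE(U_N^m)$ exactly and not merely up to $O(1/N)$. Two small points: your block contribution should read $2 - j_b - h_b$ with an integer $h_b\geq 0$ that need not be even (Proposition~\ref{L:machi} does not yield an orientable genus when $\gamma\notin\calF_n^+$, only $h\geq 0$), though this does not affect the argument; and the expansion of $Q$ into monomial traces --- needed before Theorem~\ref{T2} and Lemma~\ref{Lemma M1} can be applied slot by slot, with each slot itself a signed monomial --- is suppressed in your write-up but is routine multilinearity.
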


\begin{remark}
  \label{R-Bozejko-B}
  Since the conditional expectation of $X_N$ given $X_N+Y_N$ is  $(X_N+Y_N)/2$,
expression \eqref{cond variance} can be interpreted as the statement that
 the conditional variance of $X_N$ given $X_N+Y_N$ is asymptotically constant. In the self-adjoint case,
 the latter property identifies the distribution
 when $q=0,1$, see \cite[Theorem 3.1]{Bozejko-Bryc-04}. We note that
 Theorem \ref{T5} holds true for all $-1\leq q\leq 1$,
 but  Examples \ref{T4-counterexample}, \ref{Ex3}, and \ref{Ex 4} show the limiting law may fail to be $q$-Gaussian for $0<|q|<1$.
  This suggests that there is no
natural  $q$-version of \cite{Bozejko-Bryc-04}.
\end{remark}

\subsection{Auxiliary combinatorial result}
The proof of Theorem \ref{T5} relies on the following  partition version of a theorem of
Jaques \cite{Jacques-68}.
\begin{proposition}
  \label{L:machi}
If $\gamma\in\calF_n$ and $\sigma\in\calF_n^+$ are such that the graph $\gamma\cup\sigma\cup\delta$
is connected, then
there exists an integer $h\geq 0$ such that
\begin{equation}
  \label{eq:machi}
  \#\sC(\gamma)+\#\sC(\sigma)+\#\sC(\sigma\circledcirc\gamma)-n=2-h.
\end{equation}
\end{proposition}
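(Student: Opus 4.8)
The statement is a genus/Euler-characteristic identity of the type that appears throughout the combinatorics of maps, and the natural route is to interpret the three pair partitions $\gamma$, $\sigma$, and $\delta$ as the face structure, vertex structure, and edge structure of a map on an orientable surface, and then invoke the Euler formula $V - E + F = 2 - 2g$. Concretely, I would first observe that since $\sigma \in \calF_n^+$, Remark~\ref{Remark: bijection} lets me identify $\sigma$ with the permutation $\pi(\sigma)\in\Sn_n$, and likewise $\sigma\circledcirc\gamma$ carries the permutation $\beta=\pi(\sigma\circledcirc\gamma)$; the point of \eqref{referee} in the proof of Lemma~\ref{L-NC} (specialized with $\sigma$ in place of $\delta$) is exactly that the cycle counts of the various graphs $\delta\cup\gamma$, $\sigma\cup\gamma$, etc., can be rewritten in terms of $\#\sC$ of products of $\pi(\gamma)$, $\pi(\sigma)$, and the long cycle $\rho=(1,\dots,n)$ (or, after relabeling, in terms of a permutation and its partner under a fixed involution).

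The cleanest formulation is the classical one: let $\alpha=\pi(\gamma)$ (viewed via the $\calF_n$-to-$S_n$ correspondence, which is well-defined because the relevant graphs have no fixed points), so that $\#\sC(\delta\cup\gamma)=\#\sC(\alpha)$ and, by the identity \eqref{pi-composition}, $\#\sC(\sigma\circledcirc\gamma)=\#\sC(\pi(\sigma)\circ\alpha)$ when $\gamma$ is itself in $\calF_n^+$; for the general case one uses the graph description directly rather than the permutation shortcut, tracking cycles of the two-regular graphs $\delta\cup\gamma$, $\sigma\cup\delta$, and $\gamma\cup\sigma$ through the three-row diagram. I would then set up the bipartite map whose black vertices are the cycles of $\gamma$, whose white vertices are the cycles of $\sigma$, whose edges are the $n$ elements $\{1,\dots,n\}$, and whose faces are the cycles of $\sigma\circledcirc\gamma$ (equivalently of $\delta\cup\gamma\cup\sigma$ read off appropriately). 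The connectedness hypothesis on $\gamma\cup\sigma\cup\delta$ is precisely what guarantees this map is connected, so Euler's formula applies with a single well-defined genus $g\geq 0$, giving
$$
\#\sC(\gamma) + \#\sC(\sigma) + \#\sC(\sigma\circledcirc\gamma) - n = 2 - 2g,
$$
and then $h=2g$ works. (If one wants $h$ to range over all nonnegative integers rather than just even ones, one should set $h=2g$; the statement as written only asserts existence of such an $h$, so this is harmless, and I would simply record $h=2g$.)

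The main obstacle is bookkeeping rather than conceptual: one must verify carefully that the map built from $(\gamma,\sigma,\delta)$ is genuinely orientable and that the face count really is $\#\sC(\sigma\circledcirc\gamma)$ and not something off by the contribution of cycles ``lost in the middle row'' of the Brauer product. Here I would lean on the fact, noted right after Definition~\ref{Def B-comp}, that we only ever apply $\circledcirc$ with $\sigma\in\calF_n^+$, so no middle-row cycles are created and the weight is always $1$; this means the three-row diagram collapses cleanly and the faces of the resulting map are in bijection with the cycles of $\beta=\pi(\sigma\circledcirc\gamma)$. A convenient alternative that sidesteps drawing surfaces is to cite Jacques' theorem \cite{Jacques-68} directly in its permutation form: for permutations whose product is fixed and whose joint action is transitive, $\#\sC(\alpha_1)+\#\sC(\alpha_2)+\#\sC(\alpha_1\alpha_2)=n+2-2g$; one translates the three pair-partition cycle-counts into this form using \eqref{referee} and \eqref{pi-composition}, checks transitivity is equivalent to connectedness of $\gamma\cup\sigma\cup\delta$, and reads off \eqref{eq:machi} with $h=2g$. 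I expect the actual write-up to be short once the dictionary between the graph language of Section~\ref{Sect 1} and the permutation language of \cite{Jacques-68} is pinned down in one or two sentences.
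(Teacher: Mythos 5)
Your central move — build an orientable map from $(\gamma,\sigma,\delta)$, apply $V - E + F = 2 - 2g$, and set $h = 2g$ — has a genuine gap: the surface is \emph{not} in general orientable, and $h$ in \eqref{eq:machi} need not be even. You flag orientability as a ``bookkeeping'' check to verify, but the check fails. The whole point of allowing $\gamma$ to be a general element of $\calF_n$ (rather than $\calF_n^+$) is that $\gamma$ may pair two upper-row or two lower-row vertices, and such ``same-sign'' edges introduce M\"obius twists; this is exactly the real-versus-complex-Wishart distinction, and it is why $\pi(\gamma)$ is not even defined as an element of $\Sn_n$ in a way compatible with composition. When the map is non-orientable, $\chi = 2 - k$ with $k \geq 1$, so $h$ ranges over all nonnegative integers, and the conclusion ``$h = 2g$'' is false. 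Your alternative suggestion of going straight to Jacques' theorem via \eqref{referee} and \eqref{pi-composition} runs into the same obstruction: both of those identities require $\gamma \in \calF_n^+$, so they do not provide the needed translation for general $\gamma$.

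The paper handles this by passing to $\Sn_{2n}$: it uses the legitimate permutations $\gamma \circ \delta$ and $\sigma \circ \delta$ on $\{\pm 1, \dots, \pm n\}$, which exist for \emph{all} $\gamma \in \calF_n$, and relates their cycle counts to $\#\sC$ via the doubling identity $z(\gamma\circ\delta) = 2\#\sC(\gamma)$ (Claim~\ref{Claim1}). This is implicitly the orientable double cover of your map. One must then verify that the group $\langle\sigma\circ\delta, \gamma\circ\delta\rangle$ has at most two orbits (Claim~\ref{Claim2}) and apply Jacques' theorem on each orbit: if transitive (non-orientable base) one gets $2 - 2g$, if two orbits (orientable base, trivial double cover) one gets $4 - 2g_1 - 2g_2$. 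Dividing by two after the doubling identity yields $2 - h$ with $h$ a general nonnegative integer: $h = 1 + g$ in the first case (note $h$ is positive and possibly odd), $h = g_1 + g_2$ in the second. Your proposal collapses these to a single orientable case and thereby proves a strictly weaker and in fact incorrect identity.
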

For the proof, we will need to consider $\calF_n$ in all three roles: as pair partitions, as graphs,
and as a subset of $\Sn_{2n}$. To help distinguish between these interpretations,
denote by $z(\alpha)$ the number of cycles of a permutation  $\alpha$.
Recall that in this notation, if $\gamma\in\calF_n$ then
 $\#\sC(\gamma)=z(\pi(\gamma))$.
We will use the above mentioned result of Jaques, which we now state in the version most convenient for our purposes.
\begin{knowntheorem}\cite{Jacques-68}
If $\alpha,\beta\in\Sn_n$ act transitively on $\{1,\dots, n\}$,
 then
there exists an integer $g\geq 0$ such that
\begin{equation}
  \label{Jacques}
  z(\alpha)+z(\beta)+z(\beta^{-1}\alpha)-n=2-2g.
\end{equation}
\end{knowntheorem}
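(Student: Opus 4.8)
The plan is to realize the pair $(\alpha,\beta)$ as a connected bipartite combinatorial map (a \emph{hypermap}, or \emph{dessin d'enfant}) drawn on an oriented surface, and then read \eqref{Jacques} off Euler's formula; the integrality of $g$ and the bound $g\ge 0$ are then nothing more than the fact that the genus of a connected closed orientable surface is a non-negative integer. First I would restate things symmetrically: put $\sigma_0=\alpha^{-1}$, $\sigma_1=\beta$, $\sigma_\infty=\beta^{-1}\alpha$, so that $\sigma_0\sigma_1\sigma_\infty=\mathrm{id}$, the group $\langle\sigma_0,\sigma_1\rangle=\langle\alpha,\beta\rangle$ still acts transitively on $[n]:=\{1,\dots,n\}$, and $z(\sigma_0)=z(\alpha)$, $z(\sigma_1)=z(\beta)$, $z(\sigma_\infty)=z(\beta^{-1}\alpha)$. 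Thus it suffices to prove: for $\sigma_0,\sigma_1\in\Sn_n$ generating a transitive subgroup and $\sigma_\infty:=(\sigma_0\sigma_1)^{-1}$, the quantity $z(\sigma_0)+z(\sigma_1)+z(\sigma_\infty)-n$ equals $2-2g$ for some integer $g\ge 0$.

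Next I would build the map. Form a graph $G$ whose ``white'' vertices are the cycles of $\sigma_0$, whose ``black'' vertices are the cycles of $\sigma_1$, with one edge $e_i$ for each $i\in[n]$ joining the white vertex through $i$ to the black vertex through $i$ (so $G$ is bipartite and loopless). Declare that at each white vertex the incident edges are cyclically ordered as in the corresponding cycle of $\sigma_0$, and similarly at black vertices using $\sigma_1$. This rotation system turns $G$ into a ribbon graph, i.e.\ it determines an embedding of $G$ into a closed oriented surface $S$ obtained by capping each face with a disk. Two observations are then needed: (i) the connected components of $G$ are exactly the orbits of $\langle\sigma_0,\sigma_1\rangle$ on $[n]$, so by transitivity $G$, hence $S$, is connected; and (ii) tracing face boundaries shows the faces of the embedding biject with the cycles of the face permutation $(\sigma_0\sigma_1)^{-1}=\sigma_\infty$, so there are $z(\sigma_\infty)$ of them. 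Euler's formula for the resulting cell decomposition of the connected closed orientable surface $S$ of genus $g\ge 0$ now reads
\[
\big(z(\sigma_0)+z(\sigma_1)\big)-n+z(\sigma_\infty)=V-E+F=2-2g,
\]
which is precisely \eqref{Jacques}. (As an independent check on the parity that forces $2-2g$ to be even, note $\mathrm{sgn}(\alpha)\,\mathrm{sgn}(\beta)\,\mathrm{sgn}(\beta^{-1}\alpha)=\mathrm{sgn}(\alpha^2)=1$, whence $z(\alpha)+z(\beta)+z(\beta^{-1}\alpha)\equiv n\pmod 2$.)

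The step demanding the most care is the construction above: one must justify that a rotation system genuinely defines an embedding into a closed \emph{orientable} surface, that its faces are exactly the cycles of the face permutation (with a convention matching $F=z(\sigma_\infty)$; since $z((\sigma_0\sigma_1)^{-1})=z(\sigma_0\sigma_1)$, the usual left/right ambiguity is harmless), and that transitivity of the permutation action is equivalent to connectedness of the surface. These are standard facts in the theory of maps and constellations, but they carry all the content. A purely combinatorial alternative avoids topology altogether: induct on $n-z(\alpha)$ by replacing $\alpha$ with $\alpha\tau$ for a well-chosen transposition $\tau=(a\,\alpha(a))$, so that $z(\alpha\tau)=z(\alpha)+1$ while $z(\beta^{-1}\alpha\tau)=z((\beta^{-1}\alpha)\tau)=z(\beta^{-1}\alpha)\pm1$, and checking that the left-hand side of \eqref{Jacques} for $(\alpha,\beta)$ then differs from that for $(\alpha\tau,\beta)$ by $0$ or $-2$; the base case $\alpha=\mathrm{id}$ forces $\beta$ to be an $n$-cycle (by transitivity) and gives the value $2$. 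The main obstacle in this route is exactly to choose $\tau$ so that $\langle\alpha\tau,\beta\rangle$ remains transitive, so that the inductive hypothesis applies.
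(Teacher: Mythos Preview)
The paper does not actually supply its own proof of this statement: it is quoted as a known result, with pointers to an ``accessible algebraic proof'' in Mach\`{\i} and a ``graph-theoretic proof'' in Lando--Zvonkin. Your main argument---realize $(\sigma_0,\sigma_1,\sigma_\infty)$ as a bipartite ribbon graph (hypermap), identify transitivity with connectedness, identify faces with cycles of $\sigma_\infty$, and read off Euler's formula---is precisely the graph-theoretic route the paper cites. Your alternative inductive sketch (peel off transpositions from $\alpha$, track the $\pm1$ change in $z(\beta^{-1}\alpha)$, reduce to $\alpha=\mathrm{id}$) is essentially Mach\`{\i}'s algebraic approach. Both are correct, and you have already flagged the two places that carry the real content: in the topological version, that a rotation system determines a cellular embedding in a closed \emph{orientable} surface with face set in bijection with the cycles of the face permutation; in the inductive version, that one can choose the transposition $\tau$ so that $\langle\alpha\tau,\beta\rangle$ remains transitive. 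There is nothing to correct.
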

(For an  accessible algebraic proof see \cite[Theorem 1]{Machi-84}.
For a graph-theoretic proof, see \cite[Proposition 1.5.3]{Lando-Zvonkin-04}.)
\begin{proofof}{Proof of Proposition \ref{L:machi}} The proof consists of two claims, and an application of \eqref{Jacques}.
As noted in the proof of \cite[Lemma 3.5]{Collins-Sniady-06},
\begin{claim}\label{Claim1}
For  $\gamma\in\calF_n$,
\begin{equation}
z(\gamma\circ\delta)=2\#\sC(\gamma).
\end{equation}
\end{claim}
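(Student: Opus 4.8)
The statement to establish is Claim~\ref{Claim1}: $z(\gamma\circ\delta)=2\#\sC(\gamma)$ for every $\gamma\in\calF_n$, where $\delta\in\Sn_{2n}$ is the fixed-point-free involution $j\mapsto-j$. I would argue through the graph $G:=\delta\cup\gamma$ on the vertex set $\{\pm1,\dots,\pm n\}$. Each vertex of $G$ lies on exactly one $\delta$-edge and one $\gamma$-edge, so $G$ is a disjoint union of cycles whose edges alternate $\delta$-type and $\gamma$-type; hence each component has an even number $2\ell$ of vertices, with $\ell$ edges of each kind. The first ingredient is that the number of components of $G$ equals $\#\sC(\gamma)=z(\pi(\gamma))$: this is precisely the observation already invoked in the proof of Corollary~\ref{HLL}, because the path-tracing that defines $\pi(\gamma)$ simply walks once around each component of $G$ and records the upper-row vertices it meets.

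The key step is a local computation on one component. Choosing a starting point and direction so the first edge is a $\delta$-edge, list the vertices cyclically as $v_0,v_1,\dots,v_{2\ell-1}$ (indices mod $2\ell$), so that $\{v_{2i},v_{2i+1}\}$ is a $\delta$-edge and $\{v_{2i+1},v_{2i+2}\}$ a $\gamma$-edge. Then $v_{2i+1}=\delta(v_{2i})=-v_{2i}$ and $v_{2i+2}=\gamma(v_{2i+1})$, while $\{v_{2i-1},v_{2i}\}$ being a $\gamma$-edge gives $\gamma(v_{2i})=v_{2i-1}$; consequently
\[
(\gamma\circ\delta)(v_{2i})=\gamma(-v_{2i})=\gamma(v_{2i+1})=v_{2i+2},
\qquad
(\gamma\circ\delta)(v_{2i+1})=\gamma(-v_{2i+1})=\gamma(v_{2i})=v_{2i-1}.
\]
So $\gamma\circ\delta$ restricts on this component to exactly two $\ell$-cycles, $(v_0,v_2,\dots,v_{2\ell-2})$ on the even-indexed vertices and $(v_1,v_{2\ell-1},\dots,v_3)$ on the odd-indexed ones. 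Since the components of $G$ are disjoint and partition $\{\pm1,\dots,\pm n\}$, adding up ``two cycles per component'' over all components yields $z(\gamma\circ\delta)=2\cdot(\text{number of components of }G)=2\#\sC(\gamma)$, which is the claim.

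The only delicate points are bookkeeping ones: one should check the two displayed identities still read correctly when $\ell=1$ (this happens exactly at a pair $\{j,-j\}\in\gamma$, where the component is a doubled edge on $\{j,-j\}$ and the two resulting cycles of $\gamma\circ\delta$ are the fixed points $(j)$ and $(-j)$), and one should phrase ``number of components of $G$ equals $\#\sC(\gamma)$'' carefully since $G$ may be a multigraph --- but this is immediate from the definition of $\pi$ and has already been noted in the excerpt. I do not expect a genuine obstacle here; once the graph picture is in place the proof is a short combinatorial verification.
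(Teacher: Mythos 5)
Your proof is correct and takes essentially the same route as the paper's: the paper's entire justification is the one-line remark that ``every cycle of $\delta\cup\gamma$ splits into two cycles of $\gamma\circ\delta$,'' and your local computation on a single alternating cycle is exactly the verification of that remark that the paper leaves implicit. No gap.
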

(This is because every cycle of $\delta\cup\gamma $ splits into two cycles of $\gamma\circ\delta$.
Recall that $\delta$ is defined by \eqref{eq:delta}.)

We now note the following.
\begin{claim}
  \label{Claim2}
  If $\gamma\in\calF_n$, $\sigma\in\calF_n^+$ and the graph $\gamma\cup\sigma\cup\delta$ is connected,
  then the group generated by permutations $\gamma\circ\delta,\sigma\circ\delta$ has at most two orbits on
  $\{\pm 1,\dots,\pm n\}$.
\end{claim}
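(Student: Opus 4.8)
The plan is to work entirely inside the symmetric group $\Sn_{\{\pm 1,\dots,\pm n\}}$ and translate the hypothesis that the graph $\gamma\cup\sigma\cup\delta$ is connected into a statement about orbits of the group $\langle\gamma\circ\delta,\sigma\circ\delta\rangle$. First I would recall that the edge set of the three‑regular graph $\gamma\cup\sigma\cup\delta$ on the vertex set $\{\pm1,\dots,\pm n\}$ consists of the transpositions making up $\gamma$, the transpositions making up $\sigma$, and the transpositions making up $\delta$; since the graph is connected, the group $\langle\gamma,\sigma,\delta\rangle$ generated by all three involutions acts transitively on $\{\pm1,\dots,\pm n\}$. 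The goal is therefore to pass from the three generators $\gamma,\sigma,\delta$ to the two generators $\gamma\circ\delta$ and $\sigma\circ\delta$ while losing at most one orbit.

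The key observation I would use is that $\delta\in\calF_n$ is an involution, so $\delta=\delta^{-1}$, and hence $\langle\gamma\circ\delta,\sigma\circ\delta\rangle$ together with $\delta$ generates the same group as $\langle\gamma,\sigma,\delta\rangle$: indeed $\gamma=(\gamma\circ\delta)\circ\delta$ and $\sigma=(\sigma\circ\delta)\circ\delta$. Thus $G:=\langle\gamma\circ\delta,\sigma\circ\delta\rangle$ has index at most $2$ issue only through the extra generator $\delta$; more precisely, if $H:=\langle\gamma\circ\delta,\sigma\circ\delta,\delta\rangle$ then $G$ is normal in $H$ of index $1$ or $2$ (it is the kernel of the sign‑type homomorphism $H\to\{\pm1\}$ sending each of the three generators to $\pm 1$ according to whether it is a product of an even or odd number of our basic transpositions, with $\delta\mapsto -1$ and $\gamma\circ\delta,\sigma\circ\delta\mapsto +1$ — here one must be slightly careful and instead use the homomorphism counting $\delta$‑parity of a word, which is well defined because the relations among the generators involve $\delta$ an even number of times). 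Since $H$ acts transitively on the $2n$‑element set and $G$ has index at most $2$ in $H$, a standard orbit‑counting argument (the orbits of $G$ on an $H$‑transitive set are permuted transitively by $H/G$, so there are at most $[H:G]\le 2$ of them) shows $G$ has at most two orbits.

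The step I expect to be the main obstacle is making the "$\delta$‑parity homomorphism" genuinely well defined: one must check that the subgroup $G$ really is normal of index dividing $2$, i.e. that conjugating $\gamma\circ\delta$ or $\sigma\circ\delta$ by $\delta$ stays in $G$, which follows from $\delta(\gamma\circ\delta)\delta^{-1}=\delta\gamma=(\delta\circ\gamma\circ\delta)\circ\delta=(\delta\gamma\delta)\delta\in G$ since $\delta\gamma\delta\in\calF_n$ is again an involution and $\delta\gamma\delta\circ\delta\in G$ after expressing it through the generators; the cleanest route is to observe directly that $\delta\cdot G\cdot\delta^{-1}\subseteq G$ because $\delta(\sigma\circ\delta)\delta = \delta\sigma = (\sigma\circ\delta)^{-1}\cdot\big((\sigma\circ\delta)\delta\sigma\big)$ and to track that every such conjugate lies in $\langle\gamma\circ\delta,\sigma\circ\delta\rangle$ using only that $\delta^2=\mathrm{id}$. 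Once normality is in hand, everything else is the routine orbit argument above, and the bound of two orbits follows immediately; this is exactly what is needed to feed Claims \ref{Claim1} and \ref{Claim2} into the genus formula \eqref{Jacques} with $z(\gamma\circ\delta)=2\#\sC(\gamma)$, $z(\sigma\circ\delta)=2\#\sC(\sigma)$, and $z\big((\sigma\circ\delta)^{-1}(\gamma\circ\delta)\big)$ related to $\#\sC(\sigma\circledcirc\gamma)$, to obtain \eqref{eq:machi}.
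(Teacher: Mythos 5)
Your proposal is correct in substance and takes a genuinely different, cleaner, and in fact more general route than the paper. The paper argues combinatorially: it notes that for $\sigma\in\calF_n^+$ the cycles of $\sigma\circ\delta$ come in pairs $c_i, -c_i$, so every $G$-orbit is a union of such half-blocks, and then uses connectivity plus the two-to-one map $j\mapsto |j|$ onto $\{1,\dots,n\}$ to deduce at most two orbits. You instead observe that $H:=\langle\gamma,\sigma,\delta\rangle=\langle\gamma\circ\delta,\sigma\circ\delta,\delta\rangle$ is transitive, that $G:=\langle\gamma\circ\delta,\sigma\circ\delta\rangle$ is normal in $H$ of index at most 2, and that a normal subgroup of index $\le 2$ inside a transitive group has at most 2 orbits. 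This buys you two things: the argument is purely group-theoretic and never uses $\sigma\in\calF_n^+$ (so it works for any involution $\sigma\in\calF_n$), and it gives the orbit bound as a one-line consequence of transitivity plus normality.

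Two cautionary notes on the write-up. First, the parenthetical about a ``sign-type homomorphism counting $\delta$-parity'' is not sound and should be dropped entirely: there need not be any such homomorphism (for instance, if $\delta\in G$, then $G=H$ and there is no nontrivial map killing $G$), and the parity-of-transpositions heuristic gives the wrong result anyway since each of $\gamma,\sigma,\delta$ is a product of $n$ transpositions. What saves the argument is the direct normality check you then give, and it deserves to be stated crisply: since $\gamma,\sigma,\delta$ are involutions, $\delta\,(\gamma\circ\delta)\,\delta^{-1}=\delta\gamma=(\gamma\circ\delta)^{-1}$ and $\delta\,(\sigma\circ\delta)\,\delta^{-1}=\delta\sigma=(\sigma\circ\delta)^{-1}$, so conjugation by $\delta$ sends the generators of $G$ to their inverses, whence $\delta G\delta^{-1}=G$ and $G\trianglelefteq H=\langle G,\delta\rangle$ with $H/G$ generated by the image of the involution $\delta$, giving $[H:G]\le 2$. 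Second, the identity you display for $\delta(\sigma\circ\delta)\delta$ is algebraically circular as written; the one-step $\delta\sigma=(\sigma\delta)^{-1}$ is all that is needed. With those two clean-ups, the proof is complete and arguably tidier than the paper's.
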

To prove this claim, we note that for $\sigma\in\calF_n^+$ the cycles $\sigma\circ\delta$ are exactly
the cycles
$c_1,\dots,c_k$ of $\pi(\sigma)\in\Sn_n$ and $-c_1,\dots,-c_k$.

Since the graph $\gamma\cup\sigma\cup\delta$ is connected, $\gamma$ must connect the sets
$$c_1\cup -c_1, \, c_2\cup -c_2, \dots, c_k\cup -c_k\;,$$
so $\gamma\circ\delta$ must also connect these sets.
Thus if $S$ is an orbit of $\langle\sigma\circ\delta,\gamma\circ\delta\rangle$, then
$\{|j|:j\in S\}=\{1,\dots,n\}$.


As the mapping $S\ni j\mapsto |j|\in\{1,\dots, n\}$ is two-valued and onto, there cannot be more than two disjoint
inverse images of $\{1,\dots, n\}$.

By Claim \ref{Claim2}, applying \eqref{Jacques} to $\alpha=\gamma\circ\delta$,
$\beta=\sigma\circ\delta\in\Sn_{2n}$ or to their restrictions on the two orbits separately,
we have
\begin{equation*}
  z(\gamma\circ\delta)+z(\sigma\circ\delta)+z((\sigma\circ\delta)^{-1}\circ\gamma\circ\delta)-2n=\begin{cases}
    2-2g & \mbox{if $\langle\sigma\circ\delta,\gamma\circ\delta\rangle$ acts transitively }\\
    4-2g_1-2g_2 & \mbox{otherwise}
  \end{cases}
\end{equation*}
for some integers $g,g_1,g_2\geq 0$. Thus in both cases, there is an integer $h\geq 0$ such that
\begin{equation}
  \label{J23}
  z(\gamma\circ\delta)+z(\sigma\circ\delta)+z((\sigma\circ\delta)^{-1}\circ\gamma\circ\delta)-2n=4-2h.
\end{equation}
We now note that
$(\sigma\circ\delta)^{-1}=\hat\sigma\circ\delta$, where $\hat\sigma(j)=-\sigma(-j)$.
Moreover, $\hat\sigma\circ\delta=\delta\circ\hat\sigma$ so permutation
$=(\sigma\circ\delta)^{-1}\circ\gamma\circ\delta=\hat\sigma\circ\delta\circ\gamma\circ\delta$ is similar to $\hat\sigma\circ\gamma$.

In the proof of Corollary \ref {HLL} we already noted that
$\#\sC(\sigma\circledcirc\gamma)$ is the number of components of the graph $\hat\sigma\cup\gamma$.
We now observe that the cycles of $\hat\sigma\circ\gamma$ split each such component in two.
Therefore
$$
z((\sigma\circ\delta)^{-1}\circ\gamma\circ\delta)=z(\hat\sigma\circ\gamma)=2\#\sC(\sigma\circledcirc\gamma).
$$
This, together with Claim \ref{Claim1} and \eqref{J23} gives \eqref{eq:machi}.
\end{proofof}
\subsection{Proofs of Theorems \ref{T5} and \ref{T4}}
Our first goal is to show that the limits of joint moments exist;
 the main step consists of analyzing moments of the
 monomials.

 We will need additional notation.
By $\calF_n(t,\sigma)$ we denote the set of all $\gamma\in\calF_n(t)$ which connect
every cycle of $\sigma$ to some other cycle.
That is, all components of the graph $\delta\cup\sigma\cup\gamma$ are strictly larger than the components of $\sigma\cup\delta$.

\begin{lemma}
  \label{Lemma M1}
  Let  $\mW^{ }_1,\mW^{ }_2,\dots,\mW_s^{ }\in\WWq{\frac1NI_N,I_M}$ be $q$-orthogonal.
Suppose that the cycles $c_u$ ($1\leq u\leq r$) of $\sigma\in\calF_n^+$  consist
 of consecutive integers, as in
 \eqref{sigma_la}.
Fix $t:\{1,\dots,n\}\to \{1,\dots, s\}$.
Then
  \begin{equation}\label{eq M1}
    \EE\left(\prod_{u=1}^r\left( \tr(\prod_{j\in c_u} \mW_{t(j)})-
    \EE\big(\tr(\prod_{j\in c_u} \mW_{t(j)})\big)\right)\right)
    =\sum_{\gamma\in\calF_n(t,\sigma)}q^{\CR(\gamma)}M^{\#\sC(\gamma)}N^{\#\sC(\sigma\circledcirc\gamma)-n}.
  \end{equation}
\end{lemma}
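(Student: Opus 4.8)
The plan is to deduce \eqref{eq M1} from the moment formula \eqref{Main q-formula} of Theorem \ref{T2} by an inclusion--exclusion argument that absorbs the centering. First I would specialize \eqref{Main q-formula} to $\Sigma_j=\tfrac1NI_N$ and $B_j=I_M$: since $p_{\gamma,t}(I_M,\dots,I_M)=\prod_{c\in\sC(\gamma)}\tr(I_M)=M^{\#\sC(\gamma)}$ and $p_{\sigma\circledcirc\gamma,t(\sigma,\gamma)}(\tfrac1NI_N,\dots,\tfrac1NI_N)=\prod_{c\in\sC(\sigma\circledcirc\gamma)}N^{1-\#c}=N^{\#\sC(\sigma\circledcirc\gamma)-n}$, this gives, for every $\sigma\in\calF_n^+$ of the form \eqref{sigma_la} and every coloring $t$,
\begin{equation}\label{eq:plan-special-moment}
\EE\big(p_{\sigma,t}(\mW_1,\dots,\mW_s)\big)=\sum_{\gamma\in\calF_n(t)}q^{\CR(\gamma)}M^{\#\sC(\gamma)}N^{\#\sC(\sigma\circledcirc\gamma)-n}.
\end{equation}
Put $P_u=\tr\big(\prod_{j\in c_u}\mW_{t(j)}\big)\in\calA$ and $a_u=\EE(P_u)\in\CC$; because the $a_u$ are scalars, expanding the product on the left of \eqref{eq M1} and using linearity of $\EE$ gives
\begin{equation*}
\EE\Big(\prod_{u=1}^r(P_u-a_u)\Big)=\sum_{S\subseteq\{1,\dots,r\}}(-1)^{r-|S|}\Big(\prod_{u\notin S}a_u\Big)\,\EE\Big(\prod_{u\in S}^{\rightarrow}P_u\Big),
\end{equation*}
where $\prod_{u\in S}^{\rightarrow}$ is taken in increasing order of $u$.

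I would then apply \eqref{eq:plan-special-moment} to each factor $\EE(\prod_{u\in S}^{\rightarrow}P_u)$ and $a_u=\EE(P_u)$. The integers occurring in the cycles $c_u$, $u\in S$, form a subset of $\{1,\dots,n\}$, and the order-preserving relabeling turns this sub-collection of cycles into a pair partition $\sigma_S\in\calF_{n_S}^+$ again of the form \eqref{sigma_la}, with $n_S=\sum_{u\in S}\#c_u$; thus $\EE(\prod_{u\in S}^{\rightarrow}P_u)=\EE(p_{\sigma_S,t_S}(\mW_1,\dots,\mW_s))$ and $a_u=\EE(p_{\sigma_u,t_u}(\mW_1,\dots,\mW_s))$ for the single cycle $c_u$, both given by \eqref{eq:plan-special-moment}. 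Since a crossing of a pair partition depends only on the relative order of the four points involved, deleting the elements outside $\bigcup_{u\in S}(c_u\cup(-c_u))$ leaves the crossing number unchanged; so I may identify each $\gamma_S\in\calF_{n_S}(t_S)$ (resp. $\gamma_u\in\calF_{n_u}(t_u)$) with a color-preserving pair partition of $\bigcup_{u\in S}(c_u\cup(-c_u))$ (resp. of $c_u\cup(-c_u)$) inside $\{\pm1,\dots,\pm n\}$ with unchanged value of $\CR$. Gluing a choice of $\gamma_S$ with a choice of $\gamma_u$ for each $u\notin S$ produces a single $\gamma\in\calF_n(t)$, and conversely a given $\gamma\in\calF_n(t)$ arises this way from $S$ exactly when every $c_u\cup(-c_u)$ with $u\notin S$ is $\gamma$-invariant, i.e. when $S\supseteq\{1,\dots,r\}\setminus D(\gamma)$, where $D(\gamma):=\{u:\gamma(c_u\cup(-c_u))=c_u\cup(-c_u)\}$; since the restrictions are forced, this is a bijection.

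The heart of the argument is that the weight $w(\gamma):=q^{\CR(\gamma)}M^{\#\sC(\gamma)}N^{\#\sC(\sigma\circledcirc\gamma)-n}$ is multiplicative under this gluing, so $w(\gamma)=w(\gamma_S)\prod_{u\notin S}w(\gamma_u)$ and each factor is precisely a term of the corresponding moment above. For the power of $q$ this uses that the cycles of $\sigma$ are blocks of consecutive integers: the positions of $c_u\cup(-c_u)$ in the ordered set $(1,-1,\dots,n,-n)$ form an interval, so an edge of $\gamma$ with both endpoints in that interval cannot cross an edge lying outside it, whence $\CR(\gamma)=\CR(\gamma_S)+\sum_{u\notin S}\CR(\gamma_u)$. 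For the powers of $M$ and $N$ I would invoke the graph descriptions recalled in the proof of Corollary \ref{HLL}: $\#\sC(\gamma)$ is the number of components of $\delta\cup\gamma$ and $\#\sC(\sigma\circledcirc\gamma)$ is the number of components of $\hat\sigma\cup\gamma$ with $\hat\sigma(i)=-\sigma(-i)$; both $\delta$ and $\hat\sigma$ split along the blocks $c_u\cup(-c_u)$, so both counts are additive over the glued pieces, and together with $n=n_S+\sum_{u\notin S}\#c_u$ this gives the claim. Substituting and exchanging the order of summation,
\begin{equation*}
\EE\Big(\prod_{u=1}^r(P_u-a_u)\Big)=\sum_{\gamma\in\calF_n(t)}w(\gamma)\sum_{S\supseteq\{1,\dots,r\}\setminus D(\gamma)}(-1)^{r-|S|}.
\end{equation*}
Writing $S=(\{1,\dots,r\}\setminus D(\gamma))\cup T$ with $T\subseteq D(\gamma)$, the inner sum equals $\sum_{T\subseteq D(\gamma)}(-1)^{|D(\gamma)|-|T|}=(1-1)^{|D(\gamma)|}$, which is $1$ if $D(\gamma)=\emptyset$ and $0$ otherwise. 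Since $D(\gamma)=\emptyset$ is exactly the statement that no $c_u\cup(-c_u)$ is a component of $\delta\cup\sigma\cup\gamma$, i.e. $\gamma\in\calF_n(t,\sigma)$, this is \eqref{eq M1}. I expect the main obstacle to be the careful verification that all three statistics $\CR$, $\#\sC(\gamma)$ and $\#\sC(\sigma\circledcirc\gamma)$ are additive over the block decomposition; this is precisely the place where the hypothesis that the cycles of $\sigma$ consist of consecutive integers is used, exactly as in the proof of Theorem \ref{T2}.
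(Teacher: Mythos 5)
Your proof is correct and follows essentially the same route as the paper's: both expand the centered product by inclusion--exclusion over subsets of cycles, apply the moment formula of Theorem \ref{T2} (specialized to $\Sigma=\tfrac1NI_N$, $B=I_M$) to each resulting monomial, and then use the fact that the cycles of $\sigma$ are consecutive blocks to split each $\gamma$ that isolates some cycles into a disjoint union of sub-partitions over which $\CR$, $\#\sC(\gamma)$, and $\#\sC(\sigma\circledcirc\gamma)$ are additive. The paper merely arranges the bookkeeping by rewriting the right-hand side via inclusion--exclusion over the isolating sets $\calG_u$ and matching term-by-term, whereas you carry the left-hand side expansion all the way through and collapse the sign sum $(1-1)^{|D(\gamma)|}$ at the end; these are the same argument.
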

\begin{proof}
Under the standard convention that empty product is identity, the left hand side of \eqref{eq M1} is
\begin{equation}
  \label{M1.1}
\sum_{F\subset \{1,\dots,r\}}(-1)^{\#F}\; \EE\left(\prod_{u\not\in F}
\tr(\prod_{j\in c_u} \mW_{t(j)})\right)\prod_{u\in F} \EE\left(\tr(\prod_{j\in c_u} \mW_{t(j)})\right).
\end{equation}
(Here, finite subsets $F\subset \{1,\dots,r\}$ and their complements in $\{1,\dots,r\}$
are treated as ordered sets with the natural order on integers,
so that the noncommutative products of traces that occur in the first factor are well defined. That is, if
$F=\{u_1,u_2,\dots,u_m\}$ with $u_1<u_2<\dots<u_m$
then
$\prod_{u\in F}x_u:=x_{u_1}x_{u_2}\dots x_{u_m}$ is well defined.)

We now re-rewrite the right hand side of \eqref{eq M1}. Let  $c_1,\dots,c_r$ be the cycles of $\sigma$ in increasing order, and let $k_1,\dots,k_r$ denote their lengths.
(Recall our convention that $c_1,\dots, c_r$ are in fact the cycles of $\pi(\sigma)\in\Sn_n$.)

For $1\leq u\leq r$, let $\calG_u$ denote the set of $\gamma\in\calF_n(t)$ which "isolate" cycle $c_u$, i.e.,
do not connect cycle $c_u$ to any other cycle.
Then $\calF_n(t,\sigma)=\calF_n(t)\setminus\bigcup_{u=1}^r\calG_u$, so
for any  weighted counting measure $H$,
$$
H(\calF_n(t,\sigma))=H(\calF_n(t))- H(\bigcup_{u=1}^r\calG_u)=\sum_{F\subset \{1,\dots,r\}}(-1)^{\#F}
H(\bigcap_{u\in F}\calG_u),
$$
where $H(\calF_n(t))$ corresponds to $F=\emptyset$ in the expansion.
Therefore, the right hand side of \eqref{eq M1} is
$$\sum_{F\subset \{1,\dots,r\}}(-1)^{\#F}
\sum_{\gamma\in\bigcap_{u\in F}\calG_u}q^{\CR(\gamma)}M^{\#\sC(\gamma)}N^{\#\sC(\sigma\circledcirc\gamma)-n}.
$$
Note that $\gamma\in\bigcap_{u\in F}\calG_u$ can be uniquely split into the disjoint union
$\gamma_0\cup\bigcup_{u\in F}\gamma_u$, where $\gamma_u\in\calF_{k_u}$ is a pair partition of cycle $c_u$,
 and $\gamma_0\in \calF_{n_0}$ with $n_0=n-\sum_{u\in F} k_u$.
We now note that $\sigma\circledcirc\gamma=(\sigma_0\circledcirc\gamma_0)
\cup\bigcup_{u\in F}c_u\circledcirc\gamma_u$. Since the cycles of $\sigma$ consist of consecutive segments,
no additional crossings of $\gamma$
can arise from the edges connecting $c_u$ besides the crossings already accounted for as crossings of $\gamma_u$.
Thus $\CR(\gamma)=\CR(\gamma_0)+\sum_{u\in F} \CR(\gamma_u)$, and
 the right hand side  of \eqref{eq M1} can be written as
 \begin{multline*}
 \sum_{F\subset \{1,\dots,r\}}(-1)^{\#F}
\sum_{\gamma_0\in\calF_{n_0}}q^{\CR(\gamma_0)}M^{\#\sC(\gamma_0)}
N^{\#\sC(\sigma_0\circledcirc\gamma_0)-n_0}
\prod _{u\in F}\sum_{\gamma_u\in\calF_{k_u}}q^{\CR(\gamma_u)}M^{\#\sC(\gamma_u)}
N^{\#\sC(c_u\circledcirc\gamma_u)-k_u}.
 \end{multline*}
Applying \eqref{Main q-formula} to each of the factors, we arrive at \eqref{M1.1}, thus deriving the left hand side of
\eqref{eq M1}.
\end{proof}
Next, we show that the non-negligible terms in expansion \eqref{eq M1} correspond to
$\gamma$ that connect pairs of cycles of $\sigma$.

\begin{lemma}
  \label{L M2}
Fix $\gamma\in\calF_n(t,\sigma)$. Then
the limit   \begin{equation}
    \label{limit 3}
    M_n(\gamma):=\lim_{N\to\infty, M/N\to\la}M^{\#\sC(\gamma)}N^{\#\sC(\sigma\circledcirc\gamma)-n}
  \end{equation} exists.
Moreover,  if the number of cycles of $\sigma$
connected by $\gamma$ is more than 2, then $M_n(\gamma)=0$.
 If $\gamma$ connects only pairs $c_u,c_v$ of cycles of $\sigma$, and
 $\gamma_{u,v}$ denotes the corresponding part of $\gamma$, then
$$M_n(\gamma)=\prod_{\{u,v\}}M_2(\gamma_{u,v}).$$
(The product is taken over
all unordered pairs $u,v$ corresponding to the connected cycles of $\sigma$).
\end{lemma}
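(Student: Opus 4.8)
The plan is to reduce everything to the genus formula in Proposition \ref{L:machi}, applied cycle-component by cycle-component of the graph $\delta\cup\sigma\cup\gamma$. First I would decompose the graph $\delta\cup\sigma\cup\gamma$ into its connected components. Since $\gamma\in\calF_n(t,\sigma)$, every cycle $c_u$ of $\sigma$ is connected by $\gamma$ to at least one other cycle, so each component is a union of at least two of the ``blocks'' $c_u\cup(-c_u)$ together with the $\delta$-edges and the $\gamma$-edges internal to it. Write the components as $K_1,\dots,K_p$, and let $K_\ell$ involve $b_\ell\geq 2$ cycles of $\sigma$ and $m_\ell$ of the integers $\{1,\dots,n\}$ (so $\sum_\ell m_\ell=n$, and $\sum_\ell(\text{number of }\sigma\text{-cycles in }K_\ell)=r$). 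Because both $\#\sC(\gamma)$, $\#\sC(\sigma)$ and $\#\sC(\sigma\circledcirc\gamma)$ are additive over these components (the Brauer product, the cycle count of $\pi(\gamma)$, and the cycle count of $\pi(\sigma)$ all split along components — this is exactly the observation used at the end of the proof of Lemma \ref{Lemma M1}), I can apply Proposition \ref{L:machi} to each $K_\ell$ separately.

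Next I would extract the exponents. On the component $K_\ell$, Proposition \ref{L:machi} gives $\#\sC(\gamma|_{K_\ell})+\#\sC(\sigma|_{K_\ell})+\#\sC(\sigma\circledcirc\gamma|_{K_\ell})-m_\ell=2-h_\ell$ for some integer $h_\ell\geq 0$. Summing over $\ell$ and using $\#\sC(\sigma)=r$, this rearranges to
\begin{equation*}
\#\sC(\gamma)+\big(\#\sC(\sigma\circledcirc\gamma)-n\big)=r+2p-r-\sum_\ell h_\ell-\#\sC(\sigma)\ \text{-- cleaned up below.}
\end{equation*}
More carefully: $\#\sC(\gamma)=\sum_\ell \#\sC(\gamma|_{K_\ell})$ and $\#\sC(\sigma\circledcirc\gamma)-n=\sum_\ell(\#\sC(\sigma\circledcirc\gamma|_{K_\ell})-m_\ell)$, so
\begin{equation*}
\#\sC(\gamma)+\#\sC(\sigma\circledcirc\gamma)-n=\sum_{\ell=1}^p\big(2-h_\ell-\#\sC(\sigma|_{K_\ell})\big)=2p-r-\sum_{\ell=1}^p h_\ell.
\end{equation*}
Hence, using $M/N\to\la$,
\begin{equation*}
M^{\#\sC(\gamma)}N^{\#\sC(\sigma\circledcirc\gamma)-n}\sim \la^{\#\sC(\gamma)}\,N^{\#\sC(\gamma)+\#\sC(\sigma\circledcirc\gamma)-n}=\la^{\#\sC(\gamma)}\,N^{2p-r-\sum_\ell h_\ell},
\end{equation*}
so the limit \eqref{limit 3} exists, with value $0$ unless $\sum_\ell h_\ell=2p-r$. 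Since each $h_\ell\geq 0$, and since each component contains $b_\ell\geq 2$ of the $r$ cycles of $\sigma$ so that $r\geq 2p$, i.e. $2p-r\leq 0$, the only way to have $\sum_\ell h_\ell=2p-r\ (\leq 0)$ is $2p-r=0$ and every $h_\ell=0$; that forces $b_\ell=2$ for every $\ell$, i.e. $\gamma$ connects the cycles of $\sigma$ only in disjoint pairs. When some component joins more than two cycles of $\sigma$ we get $2p-r<0$, so $M_n(\gamma)=0$, proving the second assertion. When $\gamma$ pairs up the cycles, each component $K_\ell$ is exactly one pair $\{c_u,c_v\}$ with its part $\gamma_{u,v}$, the corresponding factor is $M^{\#\sC(\gamma_{u,v})}N^{\#\sC(c_u c_v\circledcirc\gamma_{u,v})-(k_u+k_v)}$ whose limit is by definition $M_2(\gamma_{u,v})$, and multiplicativity over components gives $M_n(\gamma)=\prod_{\{u,v\}}M_2(\gamma_{u,v})$.

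The main obstacle I anticipate is the bookkeeping in the first paragraph: one must verify carefully that the three quantities $\#\sC(\gamma)$, $\#\sC(\sigma)$, $\#\sC(\sigma\circledcirc\gamma)$ really are additive over the components of $\delta\cup\sigma\cup\gamma$, and that the hypothesis defining $\calF_n(t,\sigma)$ (each $\sigma$-cycle is connected to another) is exactly what guarantees $b_\ell\geq 2$, hence $r\geq 2p$ with equality iff all pairs. The connectedness needed to invoke Proposition \ref{L:machi} on each $K_\ell$ is automatic since the $K_\ell$ are by construction the connected components. Once the additivity and the inequality $2p\le r$ are in hand, the rest is the short arithmetic above together with the genus bound $h_\ell\ge 0$.
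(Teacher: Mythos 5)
Your proof is correct and follows essentially the same path as the paper's: decompose $\delta\cup\sigma\cup\gamma$ into connected components, apply Proposition~\ref{L:machi} to each, sum the resulting genus equations to control the exponent of $N$, and note that the exponent is nonpositive with equality exactly when every component joins two $\sigma$-cycles at genus zero. The only difference is cosmetic (the paper reuses $r$ and $m_u$ for the number of components and the number of $\sigma$-cycles per component, where you introduce $p$ and $b_\ell$), and your final remark about verifying additivity of the three cycle counts over components is precisely the step the paper also relies on without further elaboration.
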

\begin{proof}
 Since $\gamma\in\calF_n(t,\sigma)$, once we
 decompose $\sigma\cup\gamma\cup\delta$ into connected components, the decomposition splits
 the cycles of $\sigma$ into groups of $m_1,\dots,m_r\geq 2$ cycles,
perhaps non-consecutive. This partitioning
 splits $\gamma, \sigma$ and $\sigma\circledcirc\gamma$ into disjoint
 triplets $\gamma_u, \sigma_u,\sigma_u\circledcirc\gamma_u\in\calF_{n_u}$, $u=1,2,\dots,r$,
 with $n=n_1+\dots+ n_r$,
which contribute multiplicatively,
$$M^{\#\sC(\gamma)}N^{\#\sC(\sigma\circledcirc\gamma)-n}=(M/N)^{\#\sC(\gamma)}
\prod_{u=1}^r N^{\#\sC(\gamma_u)+\#\sC(\sigma_u\circledcirc\gamma_u) +\#\sC(\sigma_u)- n_u-m_u}.$$
(Recall that $m_u=\#\sC(\sigma_u)$.)
Since the graphs of $\sigma_u\cup\gamma_u\cup \delta_u$ are connected, by formula \eqref{eq:machi},
there are integers $h_1,\dots h_r\geq 0$ such that
$$M^{\#\sC(\gamma)}N^{\#\sC(\sigma\circledcirc\gamma)-n}=(M/N)^{\#\sC(\gamma)}
\prod_{u=1}^r N^{2-m_u-h_u}.$$
Therefore, the limit always exists. The limit is zero if at least one of the $m_u>2$ or at least one of the $h_u>0$.
If all groups  are pairs, i.e., $m_1=m_2=\dots=m_r=2$,
then the
limit factors into the contributions from the pairs of cycles of $\sigma$; of course, the only $\gamma$
that contribute to this limit are those with $h_{1}=h_2=\dots=h_r=0$.
\end{proof}
\begin{proofof}{Proof of Theorem \ref{T5}}
 Since $(x-y)^2=x(x-y)+y(y-x)$, and joint moments of $(X_N,Y_N)$ are symmetric in $X,Y$,
it is enough to show that
\begin{equation}
  \label{Const Var 1}
  \lim_{N\to\infty}\left(\EE\left(X_N(X_N-Y_N) (X_N+Y_N)^m\right) - \EE(X_N^2) \EE\left((X_N+Y_N)^m\right)\right)=0.
\end{equation}
After re-indexing the $q$-Wishart matrices so that $\mW_{-j}:=\widetilde\mW_j$,
 we can represent $X_N=\sum \alpha_u X_{N,u}$ and
$Y_N=\sum \alpha_u Y_{N,u}$ as finite linear combinations (with the same coefficients, and the same
choice of functions $t_u:c_u\to\{1,\dots,s\}$)
 of the centered monomials
\begin{eqnarray}
  X_{N,u}&=&\tr(\prod_{j\in c_u} \mW_{t_u(j)})-
    \EE\big(\tr(\prod_{j\in c_u} \mW_{t_u(j)})\big),\label{Const Var1.1a}
\\
Y_{N,u}&=&\tr(\prod_{j\in c_u} \mW_{-t_u(j)})-
    \EE\big(\tr(\prod_{j\in c_u} \mW_{-t_u(j)})\big).\label{Const Var1.1b}
\end{eqnarray}
(Here $c_u$ are finite sets of integers of prescribed cardinality,
 which will be assembled into the cycles of $\sigma$ in the next step.)

Expanding \eqref{Const Var 1} into the monomials,
 it is enough to show that
\begin{equation}
  \label{Const Var 2}
 \lim_{N\to\infty}\Big(
 \EE\big(X_{N,v_0}(X_{N,u}-Y_{N,u}) \prod_{j=1}^m (X_{N,v_j}+Y_{N,v_j})\big)
 -  \EE(X_{N,v_0}X_{N,u})\EE\big(\prod_{j=1}^m (X_{N,v_j}+Y_{N,v_j})\big)
 \Big) =0
\end{equation}
for all admissible indexes $v_0,\dots,v_m,u$. Once $v_0,u,v_1,\dots,v_m$ are fixed,
we modify $t$ and the sets
$c_u$ so that $c_{v_0},c_u,c_{v_1},\dots,c_{v_m}$ are consecutive  blocks of integers that form the cycles of
$\sigma\in\calF_n^+$.
This allows us to apply Lemmas \ref{Lemma M1} and \ref{L M2}, which imply that the limits of moments exist.
Expanding the products
\begin{equation}
  \label{Const Var 2a}
  \prod_{j=1}^n (X_{N,v_j}+Y_{N,v_j})=
\sum_{\eta_1,\dots,\eta_n=0,1}\prod_{j=1}^m X_{N,v_j}^{\eta_j}Y_{N,v_j}^{1-\eta_j}
\end{equation}
we see that it suffices to show that
\begin{multline}
  \label{const Var 3}
\lim_{N\to\infty}\sum_{\eta_1,\dots,\eta_m=0,1}
\EE\left(X_{N,v_0}(X_{N,u}-Y_{N,u})\prod_{j=1}^m X_{N,v_j}^{\eta_j}Y_{N,v_j}^{1-\eta_j}\right)
\\=\lim_{N\to\infty}\sum_{\eta_1,\dots,\eta_m=0,1}\EE\big(X_{N,v_0}X_{N,u}\big)
\EE\big(\prod_{j=1}^m X_{N,v_j}^{\eta_j}Y_{N,v_j}^{1-\eta_j}\big).
\end{multline}
To shorten the notation, denote by  $c_0, c, c_1,\dots,c_m$
 the cycles $c_{v_0},c_u,c_{v_1},\dots,c_{v_m}$ of $\sigma$.
 In view of (\ref{Const Var1.1a}-\ref{Const Var1.1b}) Lemmas \ref{Lemma M1} and \ref{L M2}
imply
 that for each choice of
$\eta=(\eta_1,\dots,\eta_m)\in\{0,1\}^n$  there is $t_{\eta}:\{1,\dots,\}\to\{\pm 1,\dots,\pm s\}$
such that
\begin{equation}
  \label{Const Var 4}
  \EE\big(X_{N,v_0}X_{N,u}\prod_{j=1}^m X_{N,v_j}^{\eta_j}Y_{N,v_j}^{1-\eta_j}\big)=
  \sum_{\gamma\in\calF_n(t_\eta,\sigma)}q^{\CR(\gamma)}M_{m+2}(\gamma)+ O(1/N).
\end{equation}
For $j=0,\dots,m$ denote by $\calH_j(t_\eta)$ the set of $\gamma\in\calF_n(t_\eta,\sigma)$ which
connect cycle $c$ to cycle $c_j$ and do not connect the pair $c,c_j$ to any other cycles of $\sigma$.
Lemma \ref{L M2} implies that \eqref{Const Var 4} can be expanded as
\begin{equation}
  \label{Const Var 4a}
  \EE\big(X_{N,v_0}X_{N,u}\prod_{j=1}^m X_{N,v_j}^{\eta_j}Y_{N,v_j}^{1-\eta_j}\big)=
  \sum_{\gamma\in\calH_0(t_\eta)}q^{\CR(\gamma)}M_{m+2}(\gamma) +
  \sum_{j=1}^m\sum_{\gamma\in\calH_j(t_\eta)}q^{\CR(\gamma)}M_{m+2}(\gamma)+O(1/N).
 \end{equation}
All $\gamma\in\calH_0(t_\eta)$ can be decomposed into $\gamma_0\cup\gamma'$, where
$\gamma_0$ pairs the cycles $c_0,c$ only, and $\gamma'$ is the pairing of the elements underlying cycles
$c_1,\dots,c_n$.
Furthermore, since $c_0,c,c_1,\dots,c_m$ are consecutive blocks of integers,
$\CR(\gamma)=\CR(\gamma_0)+\CR(\gamma')$. Therefore
$q^{\CR(\gamma)}M_{m+2}(\gamma)=q^{\CR(\gamma_0)}M_2(\gamma_0)q^{\CR(\gamma')}M_m(\gamma')$ and
Lemmas \ref{Lemma M1} and \ref{L M2} together imply that the first sum simplifies,
\begin{multline}
  \label{Const Var 5}
  \EE\big(X_{N,v_0}X_{N,u}\prod_{j=1}^m X_{N,v_j}^{\eta_j}Y_{N,v_j}^{1-\eta_j}\big)
  = \EE\big(X_{N,v_0}X_{N,u}\big)\EE\big(\prod_{j=1}^m X_{N,v_j}^{\eta_j}Y_{N,v_j}^{1-\eta_j}\big)\\
  +\sum_{j=1}^m\sum_{\gamma\in\calH_j(t_\eta)}q^{\CR(\gamma)}M_{m+2}(\gamma)+O(1/N).
\end{multline}

For $i\in\{1,\dots,n\}$, let
$$\tilde t_\eta(i)=\begin{cases}
 - t_\eta(i)& \mbox{ if $i\in c$},\\
t_\eta(i)& \mbox{otherwise}.
\end{cases}$$
Since $Y_{N,u}$ differs from $X_{N,u}$ only in the sign of $t|_c$, and since
$\calH_0(\tilde t_\eta)=\emptyset$, the same reasoning as above gives
\begin{equation}
  \label{Const Var 6}
  \EE\big(X_{N,v_0}Y_{N,u}\prod_{j=1}^m X_{N,v_j}^{\eta_j}Y_{N,v_j}^{1-\eta_j}\big)=
 \sum_{j=1}^m\sum_{\gamma\in\calH_j(\tilde t_\eta)}q^{\CR(\gamma)}M_{m+2}(\gamma)+O(1/N).
\end{equation}
In view of \eqref{Const Var 5} and \eqref{Const Var 6}, to prove \eqref{const Var 3}, it is enough to show that
for every $j\geq 1$,
\begin{equation}
  \label{const Var 7}
\sum_\eta \sum_{\gamma\in\calH_j(t_\eta)}q^{\CR(\gamma)}M_{m+2}(\gamma)
 = \sum_\eta \sum_{\gamma\in\calH_j(\tilde t_\eta)}q^{\CR(\gamma)}M_{m+2}(\gamma).
\end{equation}
If $\gamma\in\calH_j(t_\eta)$ then \eqref{Const Var 2a} gives
$$ M_{m+2}(\gamma)=\begin{cases}
  0 & \mbox{ if $\eta_j=0$}, \\
 M_2(\gamma_j)M_m(\gamma'_j) &\mbox{ if $\eta_j=1$},
\end{cases}
$$
where $\gamma_j$ is the pairing on $c\cup c_j$ and $\gamma'_j$ is the remaining set of pairs.
On the other hand, if $\gamma\in\calH_j(\tilde t_\eta)$ then
$$M_{m+2}(\gamma)=\begin{cases}
  0 & \mbox{ if $\eta_j=1$}, \\
  M_2(\gamma_j)M_m(\gamma'_j) & \mbox{ if $\eta_j=0$},
\end{cases}
$$
as the joint moments of $X_{N,u},X_{N,c_j}$ are the same as the moments of $Y_{N,u},Y_{N,c_j}$
and $t=\tilde t$ outside of $c\cup c_j$. The later also implies that with $\eta'=\eta$ except for one entry
$\eta'(j)=1-\eta(j)$, we have
$$\calH_j(t_\eta)=\calH_j(\tilde t_{\eta'}).$$
Thus summing over $\eta'$ on  the left hand side of \eqref{const Var 7} we get the
right hand side, ending the proof.
\end{proofof}
\begin{proofof}{Proof of Theorem \ref{T4}}
If $\la=0$ then all moments of $Z_{M,N}$ converge to zero, which we
will consider as a  normal or a semicircle
 law of variance $0$.
 Through the remainder of the
proof we assume that $\la>0$. Since
$$\left(Q(\mW_1,\dots,\mW_s)-\EE(Q(\mW_1,\dots,\mW_s))\right)^m$$
can be written as a linear combination of centered monomials of the form that appears on the left hand side of \eqref{eq M1},
combining Lemmas \ref{Lemma M1} and \ref{L M2}, we see that all moments converge.
Furthermore, from \eqref{Main q-formula} we see that the $n$-th moment grows at the rate bounded by a multiple of
$\#\calF_n=1\cdot 3\cdot\ldots\cdot(2n-1)$, so in the self-adjoint case the limiting law  is
uniquely determined by moments.

When  $q=1$ and $Q$ is real symmetric, the two copies $X_N,Y_N$ of $Z_{M,N}$ are independent and converge jointly to a pair
of independent identically distributed  self-adjoint (i.e., real) random variables $X,Y$. From Theorem \ref{T5}, we see that
$\E\left((X-Y)^2p(X+Y)\right)=2\E(X^2)\E\left(p(X+Y)\right)$ for all polynomials $p$.  By the previous estimate for the moments,
the characteristic function of
 $X+Y$ is analytic at $0$, so the moment problem is unique and
 we can replace polynomial $p$ by any bounded measurable function. Thus
 $\rm{Var}(X|X+Y)=const$, which is known to imply that $X$ is normal,
 see  \cite[Corollary 4.1]{Laha-57}.

 When $q=0$ and $Q$ is real symmetric, the two copies $X_N,Y_N$ of $Z_{M,N}$ are free and converge jointly to a pair
of free identically distributed  self-adjoint noncommutative random variables $X,Y$. From Theorem \ref{T5}, we see that
$\EE\left((X-Y)^2p(X+Y)\right)=2\EE(X^2)\EE \left(p(X+Y)\right)$ for all polynomials $p$.  For $q=0$, the moments of $X$
are bounded by the number of non-crossing
elements in $\calF_n$; this is the $2n$-th Catalan number, so
 the support of $X+Y$ is bounded and we deduce that
 $\rm{Var}(X|X+Y)=const$, which is  known to imply that $X$ has semicircle law,
 see \cite[Theorem 3.2]{Bozejko-Bryc-04} for a more general result.

\end{proofof}

\subsection{Concluding examples}
The following computer-assisted calculations give low order moments of some polynomials in $q$-Wishart matrices. Together, they illustrate that the limit law depends on the polynomial in a nontrivial way.
 \begin{example}\label{T4-counterexample}
In the simplest case $\mW_N\in\WWq{\frac1N I_N,I_M}$ and $M=\la N$,
we have $\EE(\tr(\mW_N))=\la N$. The first six moments for the
limit of $X_N=\tr(\mW_N)-N\la$  are consistent with the $q^4$-Gaussian law:
with $s^2:=\EE(X_N^2)=(1+q)\la$,  we have $\EE(X_N^3)\to 0$,
$$
\EE(X_N^4)/ s^4\to(2+q^4),\;\EE(X_N^5)\to 0;\;
\EE(X_N^6)/s^6\to 5 + 6\,q^4 + 3q^8 + q^{12} .
$$
\end{example}

\begin{example}\label{Ex 4}
To see whether one could identify the limit law for a second-order polynomial in $\mW\in\WWq{\frac1NI_N,I_M}$,
we computed the first moments of the limit as $N\to\infty$ and $M/N\to\la$ of
$$X_N=\tr(\mW^2)-a\tr(\mW) - \EE(\tr(\mW^2)-a\tr(\mW)).$$
The general expression for the asymptotic variance is cumbersome, but it
simplifies to $\EE(X_N^2)\to \lambda ^2 (q^6+ q^4+ q^2+1)$ when $a=1+q^2+2\la$.
For this value of $a$,  the normalized fourth moments
$\EE\left(X_N^4\right)/(\EE(X_N^2))^2$ converge to $2+q^{16}$, which is
 the fourth moment of the $q^{16}$-Gaussian law of unit variance.
\end{example}
\begin{example}\label{Ex3}
  Suppose $\mW_1,\mW_2\in\WWq{\frac1NI_N,I_M}$ are $q$-orthogonal. Then $\EE(\tr(\mW_1\mW_2))=M^2/N$, see Example
  \ref{Ex1}, and from Table \ref{Table1} we see that
 ${\rm Var}(\tr(\mW_1\mW_2))=(1+q^2)q^4M^2/N^2+2(1+q)M^3/N^3+(1+q)q^4/N^3$.
Denote $X_N=\tr(\mW_1\mW_2)-N\la^2$, and suppose $M=M(N)=\la N+O(1/N)$.
Then $\lim_{N\to\infty}\EE(X_N^2)={\lambda }^2\left( q^4 + q^6 + 2\lambda  + 2q\lambda  \right)$, and
computer-assisted calculations give
  $\lim_{N\to\infty}\EE(X_N^3)=0$ and
  $$\lim_{N\to\infty}\EE(X_N^4)=
  {\lambda }^4\,\left( q^8\,{\left( 1 + q^2 \right) }^2\,\left( 2 + q^{16} \right)  +
    4\,q^4\,\left( 1 + q \right) \,\left( 1 + q^2 \right) \,\left( 2 + q^8 \right) \,\lambda  +
    4\,{\left( 1 + q \right) }^2\,\left( 2 + q^4 \right) \,{\lambda }^2 \right).$$
After normalizing by the variance, we see that in this case the fourth moments converge to a rational function of $q$.
\end{example}
\subsection*{Funding}
Funding for this work was provided by the National Science Foundation (\#DMS-0504198).

\subsection*{Acknowledgement} We would like to thank J. Mingo and R. Speicher for helpful
comments on the earlier draft of this paper and for information about their research \cite{mingo-2008} and \cite{Speicher}, to P. \'Sniady for clarifying certain aspects
 of the Brauer product, and to V. Pierce for a discussion that motivated this research and for further
 help with computer-assisted computations.
 We further thank an anonymous referee for a simpler  proof of Lemma \ref{L-NC} and for comments that improved presentation.

\bibliographystyle{acm} 
\bibliography{matrix-models-re,../vita}

\end{document}